\definecolor{darkgreen}{rgb}{0.0, 0.2, 0.13}
\definecolor{antiquewhite}{rgb}{0.98, 0.92, 0.84}
\newcommand{\rest}[1]{\big\rvert_{#1}}
\theoremstyle{plain}
\newtheorem{lemma}{Lemma}[section]
\newtheorem{theorem}[lemma]{Theorem}
\newtheorem{proposition}[lemma]{Proposition}
\newtheorem{corollary}[lemma]{Corollary}
\theoremstyle{definition}
\newtheorem{definition}[lemma]{Definition}
\newtheorem{remark}[lemma]{Remark}
\newtheorem{example}{Example}
\newtheorem{notation}[lemma]{Notations}
\numberwithin{equation}{section}
\newcommand{\R}{\mathbb{R}}
\newcommand{\N}{\mathbb{N}}
\newcommand{\supp}{\text{\rm supp}}
\newcommand{\C}{\mathbb{C}}
\newcommand{\ve}{\varepsilon}
\newcommand{\weak}{\rightharpoonup}
\newcommand{\Geo}{{\rm Geo}}
\newcommand{\sfd}{\mathsf d}
\newcommand{\tr}{\mathrm{tr}}
\newcommand{\ee}{{\rm e}}
\newcommand{\st}{\mathsf{S}\mathsf{t}(r,\mathsf{H})}
\title{Geometry of Grassmannians \\and optimal transport of quantum states}
\author{Paolo Antonini\thanks{Dipartimento di Matematica e Fisica ``E. de Giorgi'', Universit\`a del Salento, Lecce (Italy)
email: paolo.antonini@unisalento.it}\,\,
 and Fabio Cavalletti\thanks{Mathematics Area, SISSA, Trieste (Italy), email: cavallet@sissa.it.}}
\begin{document}
\date{}

\maketitle

\abstract{\noindent Let $\mathsf{H}$ be a separable Hilbert space. We prove that the Grassmannian $\mathsf{P}_c(\mathsf{H})$ of the finite dimensional subspaces of $\mathsf{H}$ is an Alexandrov space of nonnegative curvature and we
employ its metric geometry to develop the  
theory of optimal transport for the normal states 
of the von Neumann algebra of linear and bounded operators $\mathsf{B}(\mathsf{H})$. Seeing density matrices as discrete probability measures on $\mathsf{P}_c(\mathsf{H})$ (via the spectral theorem) we define an optimal transport cost and the Wasserstein distance for normal states. In particular we obtain a cost which induces the $w^*$-topology. 
 
Our construction is compatible with the quantum mechanics 
approach of composite systems as tensor products $\mathsf{H}\otimes \mathsf{H}$.
We provide indeed an interpretation of the pure normal states 
of $\mathsf{B}(\mathsf{H}\otimes \mathsf{H})$ as families of transport maps. This also defines a Wasserstein cost for the pure normal states of $\mathsf{B}(\mathsf{H}\otimes \mathsf{H})$, reconciling with our proposal.
}
\tableofcontents

\section{Introduction}
In Connes' noncommutative geometry program \cite{Connes94}
many naturally singular spaces 
of great interest in geometry or quantum physics can be fruitfully addressed using noncommutative operator algebras.
There is nowadays a huge literature about these noncommutative spaces 
for which we refer to the aforementioned book \cite{Connes94}. 
We limit ourselves here to list some of the most known and interesting examples. Among these we find: leaf spaces of foliations, the space of unitary representations of a discrete group and the phase space of quantum mechanics. This last one is related with the current paper where we address the problem of optimal transport for quantum states.

Due to the pervasiveness of noncommutative spaces,
extensions of the classical tools as measure theory, topology, differential calculus and Riemannian geometry, have been pursued in the noncommutative setting and during the last few years, as it is naturally expected, also the search of an appropriate analogue of a Wasserstein distance received a great deal of attention. Some important progresses have been obtained.  

In the noncommutative setting states take over the role of probability measures; 
for example, in the case of the algebra of matrices (as well in $\mathsf{B}(\mathsf{H})$ if we consider only normal states) using the matrix trace,
  states can be identified with positive definite matrices with unit trace which are indeed called density matrices.
  
In the context of spectral triples considered as noncommutative manifolds, where the noncommutative algebra $\mathcal{A}$ interacts with a Dirac operator,
  Connes \cite{Connes1} defined a $1$-Wasserstein distance
  on the space of states of $\mathcal{A}$. This is thought as the dual distance in the spirit of Monge--Kantorovich, defined in terms of Lipschitz functions (or their noncommutative analog).
 Connes' distance and the Kantorovitch duality have been the subject of many works by Rieffel,  D'Andrea, Martinetti and collaborators. We refer in a non exhaustive way  to the papers  \cite{Rieffel,Dandrea} and the references therein.
 
In the realm of free probability, Biane and Voiculescu defined an analog of the Wasserstein distance on the space of the trace-states of a $C^*$-algebra \cite{Voiculescu}. Their metric extends the classical Wasserstein metric.

A proposal for the finite dimensional case, 
which follows the principle to adapt the dynamical formulation of optimal transport
 \`a la Benamou and Brenier \cite{BB00}
 has been given 
 by Carlen and Maas \cite{CarlMaas, CM17,CM19}.  
 Here one assigns a length to each path of probability measures connecting
 the marginals.
 
  A key property of the resulting quantum distance in loc. cit. is the fact that it is induced by a Riemannian metric on the manifold of quantum states and the quantum generalisation of the heat semigroup is the gradient flow of the von Neumann entropy 
${\rm Ent}(\rho) = {\rm Trace }(\rho \log \rho)$.
This replaces the classical relative entropy of the commutative case. 
Also the relation of this approach to the rate of convergence of the quantum Ornstein-Uhlenbeck semigroup \cite{CM19} have been established. 

Subsequent developments worth mentioning include: the one of Wirth \cite{Wir18}, based on the noncommutative Dirichlet forms of Cipriani and Sauvageot \cite{CS03}  and the work of Hornshow \cite{Horns} where also the approximately finite dimensional case is considered  estabilishing lower bounds on Ricci curvature. We refer to these papers for more details.  
Finally
another proposal by Golse, Mouhot and Paul \cite{GMP15} arose in the context of the study of the semiclassical limit of quantum mechanics and it relies 
on the concept of \emph{couplings} with applications to the study
of the mean-field limit of quantum mechanics. 
\smallskip

Our contribution goes in a new direction to study a static formulation 
of the optimal transport problem between quantum states. We base our constructions on the geometric structure of the Grassmann manifold of all the finite rank projections of the underlying separable Hilbert space $\mathsf{H}$.

Let us describe more precisely the setting.

Let $\mathcal{S}_{n}(\mathsf{B(H)})$ denotes the convex set of normal states of 
$\mathsf{B(H)}$, the von Neumann algebra of linear bounded operators on $\mathsf{H}$.
For more details we refer to Section \ref{Ss:Normal}.
Any such state $\varphi$ is identified with its density matrix $\rho_{\varphi}$ satisfying $$
\rho_{\varphi}^{*} = \rho_{\varphi},\quad  \rho_{\varphi}\geq 0 \quad  \textrm{and}\quad  \tr(\rho_{\varphi}) = 1.$$ We introduce a distance 
between density matrices relying on the optimal transport problem between  
probability measures over the Grassmanian of $\mathsf{H}$.  latter is denoted by 
$\mathsf{P}$ and is defined as the collection of all orthogonal projections of 
$\mathsf{H}$. Its connected components are labelled by the dimension of the ranges of the projections.

The map between density matrices and non-negative measures over $\mathsf{P}$
is induced by the Spectral Theorem: by compactness and self-adjointness 
the following correspondence is rather natural:
$$
\rho_{\varphi} = \sum_{i} \lambda_{i} P_{V_{i}} 
\quad 
\Longrightarrow  
\quad 
\mu_{\varphi} : = \sum_{i} \lambda_{i} \delta_{P_{V_{i}}}. 
$$
Here $P_{V_{i}}$ stands for the orthogonal projection with range the finite 
dimensional eigenspace $V_{i}$ with eigenvalue $\lambda_{i} > 0$.
The spectral decomposition is understood without repetitions.
Notice the projection onto the kernel does not belong to the support of the associate measure $\mu_{\varphi}$.
 Since $\tr(\rho_{\varphi}) = 1$, it follows that 
$\tr(\cdot) \mu_{\varphi}$ is a probability measure over the Polish space 
$\mathsf{P}_{c}$, the submanifold of $\mathsf{P}$ of finite rank orthogonal projections. 

The Polish structure of $\mathsf{P}_{c}$, making it amenable to standard measure theory techniques, is the one inherited as a Finsler submanifold 
of $\mathsf{B(H)}$. However $\mathsf{P}_{c}$ admits a more convenient 
geometric structure induced infinitesimally by viewing $\mathsf{P}_{c}$ as a 
submanifold of the space of the Hilbert-Schmidt operators. It follows that  
each connected component (where the trace is constant) of  $\mathsf{P}_{c}$ 
is an Alexandrov space of non-negative curvature.
A fact giving a very natural setting to explore geometric links between normal states and optimal transport. This will be 
thoroughly studied in Section \ref{Ss:project}.

Denoting by $\sfd$ the geodesic distance of $\mathsf{P}_{c}$, 
the Wasserstein distance between the normal states $\varphi,\psi$
can be then defined as the Wasserstein distance of the spectral measures 
$\mu_{\varphi}$ and $\mu_{\psi}$ after being normalized to be probability measures:
\begin{equation}\label{E:Wasserstein}
W_{p}(\varphi,\psi) : = 
W_{p}^{\mathsf{P}_{c}}\big{(}\tr(\cdot)\mu_{\varphi},\tr(\cdot)\mu_{\psi}\big{)}.
\end{equation}
In this formula $W_{p}^{\mathsf{P}_{c}}$ denotes the classical Wasserstein distance 
defined over the Polish space $(\mathsf{P}_{c},\sfd)$.
Because of the presence of different connected components,  
$W_{p}(\cdot,\cdot)$ might easily become infinite making 
$W_{p}$ an extended distance.  

We overcome this issue by considering a larger family of discrete measures 
representing density matrices. 
In particular for each normal state $\varphi$ we consider
the set $\Lambda_{\varphi}^{\perp}$ of discrete measures 
$\mu = \sum \lambda_{i} \delta_{P_{i}}$ with $\lambda_{i} \geq 0$
such that $\rho_{\varphi} = \sum_{i}\lambda_{i} P_{i}$  and $P_{i}\perp P_{j}$ 
whenever $i \neq j$.
In contrast with the representations considered before the eigenvalues now admit repetitions.
Then the natural extension of $W_{p}(\varphi,\psi)$ is obtained by defining 
the \emph{cost} between $\varphi$ and $\psi$ as 
\begin{equation}\label{E:Cost}
\mathcal{C}_{p}(\varphi,\psi) : = 
\inf_{\left.\begin{array}{c}\mu_{0} \in \Lambda_{\varphi}^{\perp} \\\mu_{1}\in \Lambda_{\psi}^{\perp}\end{array}\right.
} W_{p}^{\mathsf{P}_{c}}\big{(}\tr(\cdot)\,\mu_{0}, 
\tr(\cdot)\,\mu_{1}\big{)},
\end{equation}
as the Wasserstein distance between the two (compact) sets of associated measures representing the states.
The main properties we obtain for $\mathcal{C}_{p}$ are the following ones:
\begin{description}
\item[Existence of optimal configurations:]	for any couple of normal states $\varphi$ and $\psi$, the infimum in \eqref{E:Cost} can be replaced by the minimum (Proposition \ref{P:compactness}). Moreover optimal couplings always exist (Proposition \ref{P:attained}).
\item[Projections of dimension 1:] 
The optimal configurations $\mu_{0},\mu_{1}$ can always be taken with support contained inside the connected component $\mathsf{P}_{1}$, i.e. the space of projections with one dimensional rank (Proposition \ref{P:dimension1}). This is $\mathbb{P}(\mathsf{H})$ the projective space of $\mathsf{H}$, the space of the pure states of the $C^*$-algebra of the compact operators $\mathbb{K}$ .
\item[Topology:] $\mathcal{C}_{p}$ is a semi-distance inducing the
weak topology over $\mathcal{S}_{n}(\mathsf{B(H)})$ (Theorem \ref{T:equivalentconvergence}).
\end{description}

We also obtain the duality formula for $W_{p}$ with the Kantorovich potentials 
represented by densely defined operators (Theorem \ref{T:Dualmain} and Corollary \ref{C:duality}). 
Relying on the geodesic structure of $(\mathsf{P}_{c},\sfd)$, 
we also study $W_{p}$-geodesics of $\mathcal{S}_{n}(\mathsf{B(H)})$ in 
Section \ref{S:geodesicW}. 

\smallskip \noindent 
In the last section we study tensor product Hilbert spaces
 $\mathsf{H} \otimes \mathsf{H}$
corresponding in quantum mechanics to composite systems.
A natural 
way to match two normal states $\varphi,\psi$ of $\mathsf{B}(\mathsf{H})$ 
would be via a normal 
state $\Xi \in \mathcal{S}_{n}(\mathsf{B}(\mathsf{H}\otimes \mathsf{H}))$ 
satisfying the partial trace conditions
$J^{1}_{\flat}\Xi = \varphi$ and $J^{2}_{\flat}\Xi = \psi$ 
(for the notation see Section \ref{Ss:marginals}).
In Section \ref{S:tensor-Wasserstein} we reconcile this point of view
with the one presented in Section \ref{S:Wasserstein}.

In particular we prove the following (Theorem \ref{T:injection}).
\begin{description}
\item[Pure normal states of the tensor product as natural families of transport plans:]	given any element $\omega_{\zeta}$ of 
$\mathcal{PS}_{n}(\mathsf{B}(\mathsf{H}\otimes \mathsf{H}))$, i.e. any pure normal state of $\mathsf{B}(\mathsf{H}\otimes \mathsf{H})$ 
with partial traces $\varphi$ and $\psi$,
we associate a family of 
admissible transport plans between admissible representations of 
$\varphi$ and $\psi$. In particular this permits to assign a well-defined 
optimal transport cost to any pure normal state of $\mathsf{B}(\mathsf{H}\otimes \mathsf{H})$ (Remark \ref{R:costpure}).
\end{description}

\smallskip
We conclude by mentioning that we tried to keep the paper 
as self-contained as possible. 
In particular in Section \ref{S:preliminaries} 
we have collected, and in some cases re-proved,  
many of the known geometric properties 
of the Grassmanian $\mathsf{P}_{c}$ that are used in this paper and 
that were distributed through different references.

\subsection*{Acknowledgements}
The authors wish to thank Antonio Lerario for a number of interesting discussions.

\subsection{Notations}
In this paper we will switch freely from the standard notation for vectors in a Hilbert space to the Dirac notation with Bra and Kets. In particular we consider the inner product $\langle \cdot ,\cdot \rangle$ or  $\langle \cdot |\cdot \rangle$ with the {\em{Physicists convention}}: antilinear in the first entry.
For a linear operator $T$ on a vector space we denote $N(T)$ for its Kernel and $R(T)$ for its image. 

Projection means orthogonal projection ie. $P=P^*$ and $P^2=P$ when $P^2=P$ we say idempotent.
Also for two projections we write $Q \leq P$ if and only if 
$Q\mathsf{H}\subset P\mathsf{H}$. This is equivalent to $PQ=Q$ or  $QP=Q$.

%------------------------------------------------------------------------------

\section{Preliminaries}\label{S:preliminaries}

\subsection{Geometry of the space of projections}\label{Ss:project}
Let us fix $\mathsf{H}$ an Hilbert space; $\mathsf{B}(\mathsf{H})$ will be the  space 
of bounded linear operators in $\mathsf{H}$ and 
$\mathsf{B}_{h}(\mathsf{H})$ the subspace of the self-adjoint ones (Hermitian); also denote by $\mathsf{B}_{sa}(\mathsf{H})=\{X\in \mathsf{B}(\mathsf{H}):\, X=-X^*\}$ the skew adjoints.

The Grassmannian of $\mathsf{H}$, denoted with $\mathsf{P}$ is the space of all the 
 projections:
$$ \mathsf{P}=\big{\{}P\in \mathsf{B}(\mathsf{H}): P=P^*\textrm{ and } P^2=P\big{\}}.$$
We describe its geometry mainly following \cite{Androu0,Andru1,Andru2,CPR,Porta1}.
Fundamental is the natural action of the unitary group $\mathsf{U}(\mathsf{H})$ by conjugation $g\cdot P=gPg^*$ for $g\in \mathsf{U}(\mathsf{H}).$ \\

We recall  here few but important facts about the group
 ${\mathsf{U}(\mathsf{H})}$. This is a Banach--Lie group, closed inside $\mathsf{B}(\mathsf{H})$ with Lie algebra identified with the skew adjoint operators $\mathfrak{u}:=T_1{\mathsf{U}(\mathsf{H})}=\mathsf{B}_{sa}(\mathsf{H})$ having the operators commutator as Lie bracket. The exponential map
 $\exp: \mathfrak{u} \longrightarrow {\mathsf{U}(\mathsf{H})}$
 is the operators exponentiation. It is surjective because in
 $\mathsf{B}(\mathsf{H})$ we may form Borel functions of normal operators; this gives a logarithm for every skew-adjoint operator.

All the curves in the form
$$
[-1,1] \ni t\longmapsto ue^{itX} \in {\mathsf{U}(\mathsf{H})}
$$ 
with $X\in \mathsf{B}_h(\mathsf{H})$
i.e. the translations of one parameter groups are called the {\em{group geodesics}} of ${\mathsf{U}(\mathsf{H})}$. The name is legitimated by the fact that we can find a natural class of linear connections on 
${\mathsf{U}(\mathsf{H})}$ creating such geodesics.
Moreover one can show that 
are minimal curves inside ${\mathsf{U}(\mathsf{H})}$ with respect to the natural Finsler structure inherited by the embedding $\mathsf{U}(\mathsf{H}) \subset \mathsf{B}(\mathsf{H})$ (see \cite{Atkin}).
We are now ready to discuss the geometry of $\mathsf{P}$.

\begin{itemize}
\item[1.] {\bf Manifold structure.} $\mathsf{P}$ is a submanifold of $\mathsf{B}_h(\mathsf{H})$ with complemented tangent. Its tangent space at $P$, as a submanifold is naturally identified  in the following way:
\begin{equation}\label{tangentidentification}
T_{P}\mathsf{P} = \big{\{} Y \in \mathsf{B}_h(\mathsf{H})\colon PY+YP=Y\big{\}};
\end{equation}
or equivalently with all the selfadjoint operators $Y$ satisfying $P Y P = (1-P)Y (1-P) = 0$. 
Indeed $P$ induces a block decomposition for the whole $\mathsf{B}_h(\mathsf{H})$ 
\begin{equation}\label{decomp}
A \longmapsto \bigg{(}\begin{array} {cc}PAP & PA(1-P) \\
 (1-P)AP & (1-P)A(1-P) \end{array} \bigg{)},
\end{equation}
so that can give the following.
\begin{definition}
The selfadjoint operators which are off-diagonal in the decomposition \eqref{decomp} are called \emph{co-diagonal} with respect to $P$. The space of all the co-diagonal operators with respect to $P$ is denoted by $\mathscr{C}_P$.	
\end{definition}
In symbols
\begin{equation}\label{codiagonalsidentification}
 \mathscr{C}_P =\big{\{} Y \in \mathsf{B}_h(\mathsf{H})\colon PY+YP=Y\big{\}}.
\end{equation}
Let us prove the \eqref{tangentidentification}. The first inclusion comes differentiating the relation $\gamma^2(t)=\gamma(t)$ for a smooth curve in $\mathsf{P}$ with $\gamma(0)=P$. For the reversed inclusion we make use of  \eqref{codiagonalsidentification} and we observe first
that any $X\in \mathscr{C}_P$ satisfies 
 $X=[[X,P],P]$. This also means (every commutator with $P$ is codiagonal) that $\mathscr{C}_P=\{i[X,P]: X\in \mathsf{B}_s(\mathsf{H}) \}
$. 
 Now if $X$ is codiagonal,  
$e^{t[X,P]}$ is a one parameter group of unitaries ($[X,P]$ is skew-adjoint) and the path $\gamma(t)= e^{t[X,P]}Pe^{-t[X,P]}$ satisfies $\dot{\gamma}(0)=[[X,P],P]=X.$ \\
We will see later that curves in the form of $\gamma$ are exactly the geodesics through $P$ with respect to a family of natural connections.
Summing up:
$$T_P\mathsf{P}=\mathscr{C}_P=\big{\{} Y \in \mathsf{B}_h(\mathsf{H})\colon PY+YP=Y\big{\}}=\big{\{}i[X,P]: X\in \mathsf{B}_h(\mathsf{H}) \big{\}}.$$

If we denote by $\mathscr{D}_P$ the selfadjoint operators which are diagonal in the decomposition \eqref{decomp} we have a linear splitting \begin{equation}\label{cosplitting}
\mathsf{B}_h(\mathsf{H})=\mathscr{C}_P \oplus \mathscr{D}_P.\end{equation}

\item[2.] {\bf Homogeneous space structure of the connected components.}\label{I:2} 

The ${\mathsf{U}(\mathsf{H})}$ action on $\mathsf{P}$ is locally transitive for if $\|P-Q\|<1$ then $Q=g\cdot P$ for some unitary $g$. 
Using this fact one shows that $P$ and $Q$ are in the same connected component if and only if there exists a path of unitaries $g_t$ with $g_0=1$ and $P=g_1 Q g_1^*$ (a proof in \cite[Corollary 5.2.9]{Wegge}).
 In other words 
the $\mathsf{U}(\mathsf{H})$-orbits, i.e. the conjugacy classes are the connected components in 
$\mathsf{P}:$  
$$
\mathcal{O}(P):=\big{\{}gPg^*: g \in {\mathsf{U}(\mathsf{H}})\big{\}}=\textrm{connected component of }P.
$$
These connected components are easily found;
let $R(Q)$ denote the range of the operator $Q$ and $N(Q)$ its kernel. Then
 $P$ and $Q$ are connected iff $\operatorname{dim} N(P)=\operatorname{dim} N(Q)$ and $\operatorname{dim} R(P)=\operatorname{dim} R(Q)$.

Let's now fix a reference point $P\in \mathsf{P}$ (for the rest of this section).
The stabiliser $\mathsf{I}_P=\{g: g\cdot p=p\}$ coincides with the subgroup $\{g \in {\mathsf{U}(\mathsf{H})}: [g,P]=0\}$ and the quotient $\mathsf{U}(\mathsf{H})/\mathsf{I}_P$ is diffeomorphic to $\mathcal{O}_P$. More precisely, using the canonical projection 
\begin{equation}\label{pbundlecomponent}
	\mathsf{U}(\mathsf{H}) \longrightarrow \mathsf{U}(\mathsf{H})/\mathsf{I}_P \cong \mathcal{O}_P,
\end{equation}
we get a principal bundle with equivariant projection. In other words $\mathcal{O}_P$ is an homogeneous space \cite[Proposition 2.2]{Androu0}.

The decomposition diagonal/codiagonal \eqref{cosplitting} defines on the principal bundle \eqref{pbundlecomponent} a canonical connection (indeed the homogeneous space structure is {\em{reductive}}). The canonical connection induces in the customary way a notion of parallel translation, covariant derivative and  geodesics for $\mathsf{P}$. We don't construct them explicitly  here because we will consider in a while, a second, more direct connection on $T\mathsf{P}$ sharing the same geodesics.

\smallskip
\item[3.] {\bf Connection on $T\mathsf{P}$}. To any $X \in \mathsf{B}_{h}(\mathsf{H})$ we can associate its co-diagonal part with respect to $P$ using the projection onto the codiagonals
\begin{equation}\label{E:projection}
E_{P} : \mathsf{B}_{h}(\mathsf{H}) \longrightarrow T_{P}\mathsf{P}, \quad E_{P}(X) : = PX(1-P) + (1-P)X P.
\end{equation}
This induces a connection (in the usual sense) on  $T\mathsf{P}$.
If $X$ is a tangent field
(i.e. $X \colon \mathsf{P} \longrightarrow \mathsf{B}_h(\mathsf{H})$ with $X(P)\in T_P\mathsf{P}$ for every $P$)
 and  $\gamma:I \longrightarrow \mathsf{P}$ a curve, then $X\circ \gamma$ is a vector field along $\gamma$ with covariant derivative
\begin{equation}\label{connect}
\frac{DX}{dt} = E_{\gamma(t)}\bigg{(} \dfrac{d}{dt}X(\gamma(t))\bigg{)}.
\end{equation}

\item[4.] {\bf Geodesics.} A curve $\gamma : I \to \mathsf{P}$ is a geodesic if, by definition
$$
\frac{D \dot \gamma}{dt} = 0, \qquad \forall \ t \in I.
$$
All the geodesics starting at $P \in \mathsf{P}$ are in the form 
$\gamma(t) = e^{itZ} P e^{-itZ}$ with $Z \in T_{P}(\mathsf{P})$ \cite{Andru1,CPR}. As anticipated we can prove that these are also all the geodesics with respect to the connection induced by the natural connection in $\mathsf{P}$ as an homogeneous reductive  space.

To check that the geodesic equation is satisfied for $\gamma(t)=e^{itZ} P e^{-itZ}$ we take the opportunity to discuss the manifold of symmetries 
$\mathsf{S}:=\big{\{}S \in  \mathsf{B}_{h}(\mathsf{H}): \, S^2=1\big{\}}$, diffeomorphic to $\mathsf{P}$ via the map
\begin{equation}\label{diffeomorphism}
\mathcal{F}:\mathsf{P}  \longrightarrow \mathsf{S}, \quad P \longmapsto 2P-1.
\end{equation}
The tangent space at $S \in \mathsf{S}$ consists in all the self-adjoint $X \in  \mathsf{B}_{h}(\mathsf{H})$ anticommuting with $S$ i.e.
$$
T_{S}\mathsf{S} = \Big{\{}X \in \mathsf{B}_{h}(\mathsf{H}) \colon SX + XS =0\Big{\}}.
$$ 
We have a corresponding projection on the tangent space which has the form 
$$
\operatorname{Pr}_S:\mathsf{B}_h(\mathsf{H})\longrightarrow \mathsf{B}_h(\mathsf{H}), \quad \operatorname{Pr}_S(Z)=(1-P)ZP+PZ(1-P); \quad 2P-1=S,
$$
also inducing a connection on $\mathsf{S}$. 
This is given by the same formula as \eqref{E:projection}. On the other hand  the map $\mathcal{F}:\mathsf{P}\longrightarrow \mathsf{S}$ is compatible with the two connections  on the domain and target
thus sending a geodesic to a geodesic.
In fact $\mathcal{F}$ is the restriction of a map defined on the whole of $\mathsf{B}_h(\mathsf{H})$ and its differential $d_P\mathcal{F}(X)=2X$ intertwines the two projections onto $\mathsf{P}$ and $\mathsf{S}$.

Now thanks to the inclusion $\mathsf{S}\subset \mathsf{U}(\mathsf{H})$ some formulas simplify when passing to $\mathsf{S}$. 
Start with the curve $\gamma(t)=e^{itZ}Pe^{-itZ}$ in $\mathsf{P}$ with $Z\in T_P\mathsf{P}$. Since $Z$ is $P$-codiagonal, it anticommutes with $S=\mathcal{F}(P)$ so that $e^{itZ}\mathcal{F}(P)=\mathcal{F}(P)e^{-itZ}$. We can now transform $\gamma$ under $\mathcal{F}$:
$$
\mathcal{F}(e^{itZ}Pe^{-itZ})=e^{itZ}\mathcal{F}(P)e^{-itZ}=\mathcal{F}(P)e^{-2itZ}=\mathcal{F}(P)e^{-itd\mathcal{F}(Z)}.
$$
It is immediate to check that this is a geodesic in $\mathsf{S}$ and by the properties of $\mathcal{F}$ we see that $\gamma$ is a geodesic too.
Moreover $\mathcal{F}(\gamma)$ is also a geodesic in $\mathsf{U}(\mathsf{H})$ (a traslation of a one parameter group). In other words $\mathsf{S}$ is totally geodesic inside $\mathsf{U}(\mathsf{H}))$.

Put $Y:=-iSZ/2 \in T_S\mathsf{S}$ then the geodesic in $\mathsf{S}$ can also be written as $t \mapsto e^{tXS/2}Se^{-tXS/2}$. Indeed the exponential map is the restriction of the family of analytic mappings
$$
\mathsf{B}(\mathsf{H}) \longrightarrow \mathsf{B}(\mathsf{H}),\quad 
Z\longmapsto e^{ZS/2}Se^{-ZS/2}.
$$
The exponential map for $\mathsf{P}$ follows using $\mathcal{F}$.
We note also the formula $\frac{d}{dt} \, e^{tXS/2}Se^{-tXS/2}=e^{tXS/2}Xe^{-tXS/2}$.
\end{itemize}

\subsubsection{Metric aspects}  
The Grassmannian $\mathsf{P}$ has a natural non-smooth 
reversible Finsler structure induced by the operator norm via the embedding $\mathsf{P}\subset \mathsf{B}_h(\mathsf{H})$. 
However the submanifold
$$\mathsf{P}_c:=\mathsf{P} \cap \mathbb{K},$$ of the compact and then finite rank projections
is contained in the Hilbert space $\mathsf{H}\mathsf{S}(\mathsf{H})$ of the (selfadjoint) Hilbert--Schmidt operators
with metric $(A,B) \mapsto  \Re \operatorname{tr}(A^*B)$
 and inherits a riemannian structure.
Any point $P \in \mathsf{P}_c$ is finite rank so that the co-diagonal operators at $P$  are finite rank too and we have the induced metric\footnote{since the operators are codiagonal the trace of $XY$ is real valued}
$$g( X,Y) :=\operatorname{tr}(XY), \quad X,Y \in T_P\mathsf{P}_c,$$
generalising the familiar riemannian (K\"ahler) structure on the finite dimensional Grassmann manifold.
We summarise some of the basic properties (see \cite{Andrupositive,isidro}) :
\begin{itemize}
\item the topology on $\mathsf{P}_c$ induced by the embedding $\mathsf{P}_c \subset B_h(\mathsf{H})$ where $B_h(\mathsf{H})$ is given with the norm topology coincides the topology induced by the embedding  $\mathsf{P}_c \subset \mathsf{H}\mathsf{S}(\mathsf{H})$. This is clear for if $T$ and $S$ are finite rank operators with range of dimension at most $n$ then:
$$\|T-S\| \leq \|T-S\|_2 \leq \sqrt{2n}\, \|T-S\|$$
with $\|\cdot \|_2$ the Hilbert--Schmidt norm.
\item The connection \eqref{connect} is exactly the Levi--Civita connection. 
We can compute an explicit formula following \cite{Dimitric}. 
We have orthogonal projections on the tangent space and on the normal space to $\mathsf{P}_c$ and the theory of submanifolds presents no differences with the finite dimensional case.
In fact the orthogonal projection is exactly 
the projection on the codiagonals that we have already used.\\
Now let $P\in \mathsf{P}_c$ and $X,Y$ vector fields tangent to $\mathsf{P}_c$; if we denote with $D_XY$ the covariant derivative in the flat space $\mathsf{H}\mathsf{S}(\mathsf{H})$, we have at $P$:
$$D_XY=\big{(}   PD_XY(1-P)+(1-P)D_XYP \big{)}+ (XY+YX)(1-2P).$$
The first addendum is tangential to $\mathsf{P}_c$ while the second one is normal. Therefore 
$$\nabla_XY= PD_XY(1-P)+(1-P)D_XYP , \quad \textrm{the connection of }\mathsf{P}_c \textrm{ at }P, $$
$$\sigma(X,Y)=(XY+YX)(1-2P) \quad \textrm{ the second fundamental form at }P.$$

\item The geodesics that we have already discussed are geodesics for the metric in $\mathsf{P}_c$ too. In particular $t \longmapsto e^{t[X,P]}Pe^{-t[X,P]}$ is the unique geodesic starting from $P$ with initial velocity $X$.
\item The curvature tensor is $$R(X,Y)Z=\big{[}[X,Y],Z\big{]}, \quad X,Y,Z \in T_P\mathsf{P}_c$$
as follows immediately from the Gauss formula (the ambient space is flat)
$$\langle R(X,Y)Z,W \rangle = \langle \sigma(X,W),\sigma(Y,Z) \rangle - \langle \sigma(X,Z),\sigma(Y,Z) \rangle.$$
 From the Cauchy--Schwartz inequality it follows the sectional curvature is non negative.
\item The length of a smooth or Lipschitz, curve $\gamma:I \longrightarrow \mathsf{P}_c$ is defined by 
$L(\gamma)=\int_{I}\|\dot{\gamma}\|dt.$ The geodesic distance 
$\sfd$ follows by minimisation over all the paths. 
If $P$ and $Q$ satisfy $\sfd(P,Q)<\pi/2$ are joined by a unique geodesic with length $L(\gamma)=\sfd(P,Q)$.
The metric space $(\mathsf{P}_c,\sfd)$ is complete. It follows that ($\mathsf{H}$ separable) is Polish.
\end{itemize}

To describe in more details the geometry of $\mathsf{P}_c$ is useful to follow the techniques in \cite{Mennucci} presented in the real case. The extension to our, complex case is straightforward as we will show in the following. \\

To start with, we present $\mathsf{P}_c$ as the base of a second principal bundle with fiber $\mathsf{U}_r$. This is in contrast with the previous discussion.
Firstly we introduce a notation for the connected components of $\mathsf{P}_c$
\begin{equation}\label{E:connectedPc}
\mathsf{P}_{r}:=\big{\{} P\in \mathsf{P}_c: \, \operatorname{dim}R(P)=r\big{\}}.
\end{equation}
Keeping the rank $r$ fixed, let $\mathsf{S}\mathsf{t}(r,\mathsf{H})$ be the (complex) Stiefel manifold. It is the manifold of all the Hilbert space embeddings $\varphi:\mathbb{C}^r \to \mathsf{H}$. Thus $\varphi^*\varphi= \operatorname{Id}_r$. 
Any $\varphi \in \mathsf{S}\mathsf{t}(r,\mathsf{H})$ is specified by a collection of $r$-orthonormal vectors in $\mathsf{H}$, the columns of the finite dimensional matrix of $\varphi$. We have in this way a natural embedding 
\begin{equation}\label{metricembedding}
\mathsf{S}\mathsf{t}(r,\mathsf{H}) \subset \underbrace{\mathsf{H} \times \cdots \times \mathsf{H}}_{r\textrm{ times}}
\end{equation}
 with
 tangent space 
 $$T_{\varphi}\mathsf{S}\mathsf{t}(r,\mathsf{H})=\big{\{} X \in \mathsf{B} (\C^r,\mathsf{H}) :\, X^*\varphi+ \varphi ^*X=0     \big{\}}.$$ This is the space of the linear maps $X:\C^r \longrightarrow \mathsf{H}$ such that $X^*\varphi$ is skew-adjoint.
Indeed the inclusion $\subset$ is straightforward. To see the second one first solve the o.d.e. $\dfrac{d}{dt}(\gamma^*\gamma)=\dot\gamma^*\gamma +\gamma^*\dot{\gamma}=0$ in the space of the finite rank maps $\mathsf{B}(\C^r,\mathsf{H})$ with initial data satisfying: $\gamma(0)=\varphi \in \mathsf{S}\mathsf{t}(r,\mathsf{H})$, $\dot{\gamma}(0)=X$ with $X^*\varphi + \varphi^*X=0$. It follows $\gamma(t) \in \mathsf{S}\mathsf{t}(r,\mathsf{H})$. \\

 The embedding \eqref{metricembedding} induces a riemannian metric on the Stiefel manifold: 
 $(X,Y)\mapsto \Re \operatorname{tr} (X^*Y)$ for  $X,Y \in T_{\varphi} \mathsf{S}\mathsf{t}(r,\mathsf{H})$ and we shall consider its rescaled version
 $$g(X,Y):=2\Re \operatorname{tr} (X^*Y) \quad X,Y \in T_{\varphi} \mathsf{S}\mathsf{t}(r,\mathsf{H}).$$ 
 We compute the orthogonal projection on the tangent space of $\mathsf{S}\mathsf{t}(r,\mathsf{H})$. In fact 
 the orthogonal decomposition 
 $$\mathsf{H}^r \cong \mathsf{B}(\C^r,\mathsf{H})= 
 T_{\varphi}\mathsf{S}\mathsf{t}(r,\mathsf{H}) \oplus 
 N_{\varphi}\mathsf{S}\mathsf{t}(r,\mathsf{H}) $$ 
 at $\varphi$ is obtained
combining the decomposition 
\begin{equation}
\label{polarizationofphi}
\mathsf{H}=R(\varphi) \oplus R(\varphi)^{\bot}
\end{equation}
 induced by the projection $\varphi\varphi^*$ together with the orthogonal decomposition in $\mathsf{B}(\C^r)$
 by Hermitian and Skew-Hermitian matrices (with projections denoted by $\operatorname{He}$ and $\operatorname{Sk}$). For any vector
 $X\in \mathsf{B}(\C^r,\mathsf{H})$ we write
$$X=\varphi \varphi^*X + (1-\varphi \varphi^*)X=\big{[}\varphi(\operatorname{Sk} \varphi^*X) + (1-\varphi \varphi^*)X \big{]}+ \varphi(\operatorname{He}\varphi^*X).$$
It is easy to check that these are respectively the tangent and normal component with: $X\mapsto \varphi(\operatorname{Sk} \varphi^*X) + (1-\varphi \varphi^*)X$ the tangent projection and $X\mapsto \varphi(\operatorname{He}\varphi^*X)$ the normal one. In particular we see that $N_{\varphi}\mathsf{S}\mathsf{t}(r,\mathsf{H})=\big{\{} \varphi S: \,\, S\in \mathsf{B}(\C^r),\, S=S^* \big{\}}.$
 
 There are two commuting left and right action
 $$\mathsf{U}(\mathsf{H}) \,\,\,\circlearrowright  \,\,\,\mathsf{S}\mathsf{t}(r,\mathsf{H}) \,\,\,  \circlearrowleft \,\,\, \mathsf{U}_r=\mathsf{U}(\C^r)$$ corresponding to post and pre composition    
$$  u \cdot \varphi:= u\circ \varphi \quad \textrm{and} \quad \varphi   \cdot g :=\varphi \circ g , \quad u \in \mathsf{U}(\mathsf{H}), \, g \in \mathsf{U}_r.$$
The $\mathsf{U}(\mathsf{H})$ action is transitive while the $\mathsf{U}_r$ one is free. Two points $\varphi$ and $\psi$ are in the same $\mathsf{U}_r$\,-orbit if and only if they have the same range. It follows the quotient is $\mathsf{P}_{r}$ with bundle projection
\begin{equation}\label{bundleprojection3}
\pi^{\textsf{S}\textsf{t}}:\mathsf{S}\mathsf{t}(r,\mathsf{H}) \longrightarrow \mathsf{S}\mathsf{t}(r,\mathsf{H})/\mathsf{U}_r \cong \mathsf{P}_{r}, \quad \varphi \longmapsto \varphi \mathsf{U}_r \longmapsto \varphi \varphi^*. 
\end{equation}
The vertical space at $\varphi$ is $V_{\varphi}\mathsf{S}\mathsf{t}(r,\mathsf{H})=\big{\{} \varphi X : \,X \in \mathsf{B}(\C^r), \, X^*+X=0   \big{\}}$ and we choose for horizontal space its orthogonal complement 
$$\mathscr{H}_{\varphi}\mathsf{S}\mathsf{t}(r,\mathsf{H})=V_{\varphi}\mathsf{S}\mathsf{t}(r,\mathsf{H})^{\bot}=
\big{\{}   X\in T_{\varphi}\mathsf{S}\mathsf{t}(r,\mathsf{H}): g(X,Y)=0, \forall \, Y \in V_{\varphi}    \big{\}}.$$
Therefore $X$ is horizontal if and only if $\Re \operatorname{tr}(X^*\varphi Y)=0 $ for every $Y \in \mathsf{B}_{sa}(\C^r)$. Since $X^*\varphi$ is skew-adjoint too this happens if and only if $X^*\varphi=0$.

Let us check that the projection $\eqref{bundleprojection3}$ is a riemannian submersion i.e. its differential induces an isometry from the horizontal space to the tangent space of $\mathsf{P}_{r}$. For horizontal vectors $X,Y \in T_{\varphi}\mathsf{S}\mathsf{t}(r,\mathsf{H})$ we have
\begin{eqnarray*}
	g\big{(}d_{\varphi}\pi^{\mathsf{S}\mathsf{t}}(X), d_{\varphi}\pi^{\mathsf{S}\mathsf{t}}(Y)\big{)}&=&
  \operatorname{tr}\big{(}  (X\varphi^* + \varphi X^*) (Y\varphi^*+\varphi Y^*) \big{)} \\&=&
2\Re \operatorname{tr}(X^*Y) + \Re\operatorname{tr}(X\varphi^*
Y\varphi^*  +\varphi X^*\varphi Y^*)\\&=& 2\Re \operatorname{tr}(X^*Y)=g(X,Y).
\end{eqnarray*}
We have used the properties of the trace and the fact that $X$ and $Y$ are horizontal. 

Following \cite{Edelman} we derive the geodesic equation
\begin{equation}\label{geodeq}
\ddot{\gamma}+\gamma ({\dot{\gamma}}^*\dot{\gamma})=0.\end{equation}
Starting with the the condition $\gamma^*\gamma=\operatorname{Id}_r$ and differentiating two times we get $\ddot{\gamma}^*\gamma +2\dot{\gamma}^*\dot{\gamma}+\gamma^*\ddot{\gamma}=0.$ If $\gamma$ is a geodesic, the normal component of the second derivative is zero i.e. $\ddot{\gamma}= -\gamma S$ for some curve $S(t)=S(t)^* \in \mathsf{B}(r,\mathsf{H})$. Inserting this condition in the previous equation we get \eqref{geodeq}. On the other hand if a curve $t \mapsto \mathsf{S}\mathsf{t}(r,\mathsf{H})$ satisfies \eqref{geodeq} is a geodesic because the normal component of its second derivative is zero.
 
 We take from \cite[Section 3.4.1]{Jurdevic}
 a closed formula for the geodesics starting from $\varphi_0 \in \st$.
 We continue to use the splitting \eqref{polarizationofphi} induced by $\varphi_0$ so that operators in $\mathsf{H}$ are $2\times 2$ block-matrices. For any skew-adjoint operator 
 $$\mathcal{M}=\left( \begin{array}{cc} A & B \\
 -B^* & 0
 \end{array}
 \right) 	
 \quad \textrm{with skew-adjoint} \quad  A:R(\varphi_0) \rightarrow R(\varphi_0),$$ put 
 $\mathcal{Q}:=\left( \begin{array}{cc} A/2 & 0 \\
 0 & 0	
 \end{array}
\right)
.$ 
 Then $\mathcal{Q}^*=-\mathcal{Q}$ and we have a curve $$t \longmapsto \gamma(t):=e^{t\mathcal{M}}e^{-t\mathcal{Q}}\varphi_0 \in \st.$$
\begin{proposition}The curve $\gamma$ is the geodesic  in $\st$ satisfying the initial conditions: $\gamma(0)=\varphi_0$ and $\dot{\gamma}(0)=\left( \begin{array}{cc} A/2 & B \\
 -B^* & 0
 \end{array}
 \right) 	\varphi_0
 .$ Since every tangent vector $X\in T_{\varphi_0}\st$ can be put in the form $X=\left( \begin{array}{cc} A/2 & B \\
 -B^* & 0
 \end{array}
 \right) 	\varphi_0$
(with skew-adjoint $A$) this exhausts all the geodesics. Concretely take
$$A=2(\varphi_0\varphi_0^*)X\varphi_0^*\rest{R(\varphi_0)} \quad \textrm{and} \quad B=\varphi_0X^*(\varphi_0\varphi_0^* -\operatorname{Id})\rest{R(\varphi_0)^{\bot}}.
$$
\end{proposition}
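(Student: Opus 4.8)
The plan is to establish the four assertions in order: that $\gamma(t)$ remains in $\st$, the two initial conditions, the geodesic equation $\eqref{geodeq}$, and finally that the parametrization by $(A,B)$ exhausts $T_{\varphi_0}\st$. The three elementary points come first. Since $\mathcal{M}$ and $\mathcal{Q}$ are skew-adjoint, $e^{t\mathcal{M}}$ and $e^{-t\mathcal{Q}}$ are unitary, whence $\gamma(t)^*\gamma(t)=\varphi_0^*e^{t\mathcal{Q}}e^{-t\mathcal{M}}e^{t\mathcal{M}}e^{-t\mathcal{Q}}\varphi_0=\varphi_0^*\varphi_0=\operatorname{Id}_r$, so $\gamma(t)\in\st$. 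Setting $t=0$ gives $\gamma(0)=\varphi_0$, and differentiating the product once and evaluating at $t=0$ yields $\dot{\gamma}(0)=(\mathcal{M}-\mathcal{Q})\varphi_0$, which is exactly the tangent vector displayed in the statement.

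The substance is the geodesic equation, and two structural observations make it transparent. Write $P:=\varphi_0\varphi_0^*$ for the projection onto $R(\varphi_0)$. Since $\mathcal{Q}$ is block-diagonal it commutes with $P$, so $e^{-t\mathcal{Q}}\varphi_0=\varphi_0\,e^{-t\hat{A}/2}$ where $\hat{A}:=\varphi_0^*A\varphi_0\in\mathsf{B}(\C^r)$ is skew-adjoint, and moreover the image projection evolves by pure conjugation, $\gamma(t)\gamma(t)^*=e^{t\mathcal{M}}Pe^{-t\mathcal{M}}$. From these I would extract the clean velocity formula $\dot{\gamma}=\mathcal{M}\gamma-\tfrac12\gamma\hat{A}$, the conserved quantity $\gamma^*\mathcal{M}\gamma=\hat{A}$ (constant in $t$, since $\varphi_0^*\mathcal{M}\varphi_0=\hat{A}$ and conjugation by $e^{\pm t\hat A/2}$ leaves $\hat A$ fixed), and hence $\ddot{\gamma}=\mathcal{M}^2\gamma-\mathcal{M}\gamma\hat{A}+\tfrac14\gamma\hat{A}^2$.

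Rather than expand $\eqref{geodeq}$ directly, I would invoke the characterization already derived: a curve with $\gamma^*\gamma=\operatorname{Id}_r$ is a geodesic as soon as $\ddot{\gamma}$ is a normal vector, i.e. $\ddot{\gamma}=\gamma S$ with $S=S^*$, since the differentiated constraint then forces $S=-\dot{\gamma}^*\dot{\gamma}$ and recovers $\eqref{geodeq}$. So it suffices to check that $(1-\gamma\gamma^*)\ddot{\gamma}=0$ and that $\gamma^*\ddot{\gamma}$ is Hermitian. The latter is automatic, because $\gamma^*\ddot{\gamma}=\gamma^*\mathcal{M}^2\gamma-\tfrac34\hat{A}^2$ with both $\mathcal{M}^2$ and $\hat{A}^2$ self-adjoint. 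For the former, substituting $\gamma\gamma^*=e^{t\mathcal{M}}Pe^{-t\mathcal{M}}$ and the velocity formula, the expression collapses, after stripping the unitary factors, to the vanishing of $(1-P)\mathcal{M}B^*\varphi_0$. This is the main obstacle, and it is resolved by the block structure: because the lower-right entry of $\mathcal{M}$ is zero, $\mathcal{M}$ carries $R(\varphi_0)^{\perp}$ back into $R(\varphi_0)$, so $\mathcal{M}B^*\varphi_0=BB^*\varphi_0\in R(\varphi_0)$ and indeed $(1-P)\mathcal{M}B^*\varphi_0=0$. Everything hinges on this vanishing $(2,2)$-block, which is precisely the constraint imposed on $\mathcal{M}$ in the statement.

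Finally, to see that the construction exhausts all geodesics I would check that an arbitrary $X\in T_{\varphi_0}\st$ has the stated form. Splitting $X=PX+(1-P)X$ along $\mathsf{H}=R(\varphi_0)\oplus R(\varphi_0)^{\perp}$ and matching against $(\mathcal{M}-\mathcal{Q})\varphi_0$ forces $A=2(\varphi_0\varphi_0^*)X\varphi_0^*\rest{R(\varphi_0)}$ and $B=\varphi_0X^*(\varphi_0\varphi_0^*-\operatorname{Id})\rest{R(\varphi_0)^{\bot}}$. The only remaining point is that this $A$ be skew-adjoint, and a short manipulation shows $A^*=-A$ is equivalent to $X^*\varphi_0+\varphi_0^*X=0$, i.e. exactly the tangency condition defining $T_{\varphi_0}\st$. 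This closes the proof.
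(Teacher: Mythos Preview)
Your proof is correct and takes a genuinely different route from the paper's. The paper follows Jurdjevi\'c and verifies the geodesic equation $\ddot{\gamma}+\gamma(\dot{\gamma}^*\dot{\gamma})=0$ by brute computation: writing $g(t)=e^{t\mathcal{M}}e^{-t\mathcal{Q}}$, $\mathcal{P}=\mathcal{M}-\mathcal{Q}$ and $U(t)=e^{t\mathcal{Q}}\mathcal{P}e^{-t\mathcal{Q}}$, it expands $\dot Y=\dot g\,U\varphi_0+g\,\dot U\varphi_0$ and $-\gamma(Y^*Y)=g\,\varphi_0\varphi_0^*U^2\varphi_0$ separately and matches them as explicit $2\times 2$ block matrices. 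You instead invoke the submanifold characterization, recorded just before the proposition, that a curve in $\st$ is geodesic precisely when $\ddot{\gamma}$ is purely normal, and then reduce the normality check to the single identity $(1-P)\mathcal{M}B^*\varphi_0=0$, which follows immediately from the vanishing $(2,2)$-block of $\mathcal{M}$. Your argument is shorter and makes transparent \emph{why} the constraint on the shape of $\mathcal{M}$ is the right one; the paper's direct verification is more self-contained but hides this structural point inside a longer matrix manipulation. The remaining parts of your proof (initial conditions, exhaustion of $T_{\varphi_0}\st$) coincide with the paper's treatment.
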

\begin{proof}
	The proof that $\gamma$ is a geodesic is the computation in \cite[Section 3.4.1]{Jurdevic} that we write for definiteness.
	Since we already know that $\gamma(t) \in \st$ at every time let's check that \eqref{geodeq} is satisfied  i.e. $\dot{\gamma}=Y$ and $\dot{Y}=-\gamma (Y^*Y).$ Put $\gamma(t)=g(t)\varphi_0$ with $g(t)=e^{t\mathcal{M}}e^{-t\mathcal{Q}}$. We also define $$\mathcal{P}=\left( \begin{array}{cc} A/2 & B \\ -B^* & 0 \end{array} \right)=\dot{\gamma}(0), \quad \textrm{and} \quad U(t):=e^{t\mathcal{Q}}\mathcal{P}e^{-t\mathcal{Q}}.$$
	It follows $\mathcal{P}+\mathcal{Q}=\mathcal{M}$ and $\dot{g}(t)=g(t)U(t)$. 
	We compute $Y=g(t)U(t)\varphi_0$ and 
	\begin{eqnarray*}
			\dot{Y}&=&\dot{g}(t)U(t) \varphi_0 + g(t)\dot{U}(t) \varphi_0=g(t)U^2 \varphi_0 +g(t)\dot{U}(t) \varphi_0 \\
			&=& g(t) e^{t\mathcal{Q}}(\mathcal{P}^2+[\mathcal{P},\mathcal{Q}]e^{-t\mathcal{Q}})\varphi_0 \\&=& g(t)\left( \begin{array}{cc}
 A^2/4 -e^{tA/2}BB^* e^{-tA/2} & 0 \\ 0 & 0 \end{array}\right)\varphi_0.	
 \end{eqnarray*}
 Before comparing this result with $-\gamma(Y^*Y)$ we notice that
 $U(t)^*=-U(t)$ and $ g(t)^*g(t)=\operatorname{Id}.$
Finally
\begin{eqnarray*}
-\gamma(Y^*Y)&=&g(t)\,\varphi_0 \varphi_0^*U(t)^* g(t)^*g(t)\, U(t)\varphi_0=-g(t) \varphi_0\varphi_0^* U^2\varphi_0  \\
&=&-g(t)\left( \begin{array}{cc} 1 &0 \\ 0 & 0 \end{array}\right) U^2 \varphi_0 \\ &=&-g(t)\left( \begin{array}{cc}
 A^2/4 -e^{tA/2}BB^* e^{-tA/2} & 0 \\ 0 & 0 \end{array}\right)\varphi_0.
	\end{eqnarray*}
It follows that $\gamma$ is a geodesic. The remaining statement is straightforward using the decomposition
$$X=\left( \begin{array}{cc}(\varphi_0\varphi_0^*)X \varphi_0^*  &  \varphi_0 X^*(\varphi_0\varphi_0^*-1)  \\ (1-\varphi_0\varphi_0^*)X\varphi_0^* & 0 \end{array} \right)\varphi_0,$$ where all the entries are intended restricted to $R(\varphi_0)$ or $R(\varphi_0)^\bot$.
\end{proof}
\begin{corollary}\label{propertygeodesic}
For a geodesic $\gamma: [0,1]\longrightarrow \mathsf{S}\mathsf{t}(r,\mathsf{H})$,
the image of the map $\gamma(t): \C^r \longrightarrow \mathsf{H}$ (for every $t$) is contained in the subspace of $\mathsf{H}$ spanned by $(\gamma(0),\dot{\gamma}(0))$. Of course its dimension is bounded by $2r$ and it follows that if $\gamma(0)$ and $\gamma(1)$ are independent then $\gamma(t)$ and $\dot{\gamma}(t)$ belong to $\operatorname{span}(\gamma(0),\gamma(1))$ for every $t\in [0,1]$. The geodesic {\em{moves inside a finite dimensional subspace of }} $\mathsf{H}$. 
\end{corollary}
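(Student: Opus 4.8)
The plan is to read everything off the closed-form expression $\gamma(t) = e^{t\mathcal{M}}e^{-t\mathcal{Q}}\varphi_0$ furnished by the previous Proposition, and to exhibit one finite-dimensional subspace of $\mathsf{H}$ that is invariant under both generators $\mathcal{M}$ and $\mathcal{Q}$ and contains $R(\varphi_0)$. Write $W_0 := R(\varphi_0)$, of dimension $r$, and use the $2\times 2$ block splitting $\mathsf{H} = W_0 \oplus W_0^{\bot}$ in which $\mathcal{M}$ and $\mathcal{Q}$ are expressed; here $A\colon W_0\to W_0$ is the top-left block and $B^*\colon W_0 \to W_0^{\bot}$ the (negated) bottom-left block of $\mathcal{M}$. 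The candidate subspace is $V := W_0 + B^*(W_0)$. First I would check that $V$ coincides with $\s(\gamma(0), \dot\gamma(0))$, the span of the ranges of the two maps: since $R(\gamma(0)) = W_0$ and $\dot\gamma(0) = \mathcal{P}\varphi_0$ with $\mathcal{P}w = \tfrac{A}{2}w - B^*w$ for $w\in W_0$, the component $\tfrac{A}{2}w$ already lies in $W_0$, so modulo $W_0$ the range of $\dot\gamma(0)$ is exactly $B^*(W_0)$. As $B^*(W_0)\subseteq W_0^{\bot}$, the sum $V = W_0 \oplus B^*(W_0)$ is orthogonal, whence $\dim V \le r + r = 2r$, which is the asserted dimension bound.

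Next I would verify the invariance $\mathcal{M}(V)\subseteq V$ and $\mathcal{Q}(V)\subseteq V$ by a direct block computation. On $W_0$ the operator $\mathcal{M}$ produces an $A$-part in $W_0$ and a $-B^*$-part in $B^*(W_0)\subseteq V$; on $W_0^{\bot}$ the bottom-right block of $\mathcal{M}$ vanishes, so $\mathcal{M}$ sends $W_0^{\bot}$ into $W_0$; and $\mathcal{Q}$ evidently preserves $W_0$ and annihilates $W_0^{\bot}$. Since $V$ is finite-dimensional, hence closed, invariance under the generators passes to the one-parameter groups through the power series, so $e^{t\mathcal{M}}$ and $e^{-t\mathcal{Q}}$ both preserve $V$. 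Because $R(\varphi_0) = W_0 \subseteq V$, the formula $\gamma(t) = e^{t\mathcal{M}}e^{-t\mathcal{Q}}\varphi_0$ gives $R(\gamma(t)) \subseteq V$ for all $t$; differentiating the formula and using once more that $\mathcal{M}$, $\mathcal{Q}$ and the two exponentials preserve $V$ yields $R(\dot\gamma(t)) \subseteq V$ as well.

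Finally, for the independence statement I would observe that $R(\gamma(0))$ and $R(\gamma(1))$ both lie in $V$, so $\s(\gamma(0),\gamma(1)) \subseteq V$; if $\gamma(0)$ and $\gamma(1)$ are independent, meaning this span has the maximal dimension $2r$, then the inclusion together with $\dim V \le 2r$ forces $\s(\gamma(0),\gamma(1)) = V$, and the previous paragraph shows $\gamma(t)$ and $\dot\gamma(t)$ take values in this common subspace for every $t$. I expect no serious obstacle: the core is routine block-matrix bookkeeping, and the only point deserving a word of care is the passage from generator-invariance to invariance under $e^{t\mathcal{M}}$ and $e^{-t\mathcal{Q}}$, which is immediate because $V$ is finite-dimensional and therefore closed. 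As a conceptual alternative one could bypass the explicit formula and argue directly from the geodesic equation $\ddot{\gamma}+\gamma(\dot{\gamma}^*\dot{\gamma})=0$: the closed affine subspace $\{X \in \mathsf{B}(\C^r,\mathsf{H}) : R(X)\subseteq V\}$ is invariant under the locally Lipschitz second-order field $-\gamma(\dot{\gamma}^*\dot{\gamma})$, whose range is contained in $R(\gamma)$, so uniqueness of solutions confines $(\gamma,\dot\gamma)$ to it.
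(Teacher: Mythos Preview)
Your proof is correct and follows essentially the same strategy as the paper's: use the closed formula $\gamma(t)=e^{t\mathcal{M}}e^{-t\mathcal{Q}}\varphi_0$ and show that the span of the ranges of $\gamma(0)$ and $\dot\gamma(0)$ is invariant under the generators $\mathcal{M}$ and $\mathcal{Q}$. The paper's argument is terser, recording only that $R(\mathcal{M})\subset\operatorname{span}(\varphi_0,\dot\gamma(0))$ and that this span is $\mathcal{M}$-stable, while you spell out the identification $V=W_0\oplus B^*(W_0)$, treat $\mathcal{Q}$ explicitly, and work through the independence claim and the containment of $\dot\gamma(t)$; these additions are welcome but not a different route.
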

\begin{proof}
From the formula of the geodesics we just have to examine the image of the operator
$e^{t\mathcal{M}}$
taking into account that $X=\dot{\gamma}(0)$. Then:
$$\mathcal{M}=\left( \begin{array}{cc} 2(\varphi_0\varphi_0^*) \dot{\gamma}(0)\varphi_0^* & \varphi_0\dot{\gamma}(0)^*(\varphi_0\varphi_0^*-\operatorname{Id})
 \\ (\operatorname{Id}-\varphi_0\varphi_0^*)\dot{\gamma}(0)\varphi_0^* & 0 \end{array} \right).$$ But  $R(\mathcal{M})\subset \operatorname{Span}(\varphi_0,\dot\gamma(0))$ and $\operatorname{Span}(\varphi_0,\dot\gamma(0))$ is stable under $\mathcal{M}$.
\end{proof}

\noindent An embedding $\iota:\mathsf{K} \hookrightarrow \mathsf{H}$ of Hilbert spaces induces embeddings  $\iota_*:\mathsf{S}\mathsf{t}(r,\mathsf{K}) \hookrightarrow \mathsf{S}\mathsf{t}(r,\mathsf{H})$ and  $\iota_*:\mathsf{P}_{r}(\mathsf{K}) \hookrightarrow \mathsf{P}_{r}(\mathsf{H})$ where we make a slight abuse of notation for using the same symbol for the two maps. Also the notation used for the Grassmannians of different Hilbert spaces is  self-explanatory. Indeed we define $\iota_* \varphi= \iota \circ \varphi$. This is $\mathsf{U}_r$-equivariant and induces the map at the level of the Grassmannians. These embeddings are very useful according to the following.
\begin{theorem}{\em{\cite{Mennucci}}}.
Let $\mathsf{K}$ be a Hilbert space; for every embedding $\iota:\mathsf{K}\longrightarrow \mathsf{H}$ the corresponding $ \iota_*:\mathsf{S}\mathsf{t}(r,\mathsf{K}) \hookrightarrow \mathsf{S}\mathsf{t}(r,\mathsf{H})$ is an isometric embedding with totally geodesic image. Moreover:
	\begin{enumerate}
	\item 	
	When $\operatorname{dim}\mathsf{K} \geq 2r$,
	if we denote with $\mathsf{d}_{\mathsf{H}}$ and $\mathsf{d}_{\mathsf{K}}$ the respective distances then $\mathsf{d}_{\mathsf{H}}(\iota_*(x),\iota_*(y))=\mathsf{d}_{\mathsf{K}}(x,y)$ for every $x,y \in \mathsf{S}\mathsf{t}(r,\mathsf{K})$.
	\item Let again $\operatorname{dim}\mathsf{K} \geq 2r$ and let $\gamma$ be a minimal geodesic inside $\mathsf{S}\mathsf{t}(r,\mathsf{K})$. Then $\iota_* \circ \gamma$ is a minimal geodesic.
	\item The diameter of $\mathsf{S}\mathsf{t}(r,\mathsf{H})$ equals the diameter of $\mathsf{S}\mathsf{t}(r,\C^{2r})$.
	\item Any two points in $\mathsf{S}\mathsf{t}(r,\mathsf{H})$ can be joined by a minimal geodesic. Every minimal geodesic $\gamma$ lies inside some submanifold $\mathsf{S}\mathsf{t}(r,V)$ where $V \subset \mathsf{H}$ is a $2r$-dimensional subspace depending on $\gamma$.
\item Fix two points $x,y \in \mathsf{S}\mathsf{t}(r,\mathsf{H})$; then $y$ is in the cut locus of $x$ if and only if there is a $2r$-dimensional subspace $V \subset \mathsf{H}$ such that $x=\iota_*(\tilde{x})$, $y=\iota_*(\tilde{y})$ and $\tilde{y}$ is in the cut locus of $\tilde{x}$.
	\end{enumerate}
	All these properties hold for the Grassmannian manifold $\mathsf{P}_{r}(\mathsf{H})$ too. In particular any two points $x,y \in \mathsf{P}_{r}(\mathsf{H})$ are joined by a minimal geodesic.
\end{theorem}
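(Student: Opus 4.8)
The plan is to reduce every assertion to the \emph{compact}, finite-dimensional Stiefel manifolds $\mathsf{S}\mathsf{t}(r,V)$ with $\operatorname{dim}V\le 2r$, the single decisive input being Corollary~\ref{propertygeodesic}: any geodesic sweeps a subspace of dimension at most $2r$. First I would check that $\iota_*$ is an isometric immersion: since $\iota^*\iota=\operatorname{Id}_{\mathsf{K}}$ and $d\iota_*(X)=\iota\circ X$, one has $(\iota X)^*(\iota Y)=X^*Y$, so the metric $g(X,Y)=2\Re\operatorname{tr}(X^*Y)$ is preserved. That the image is totally geodesic follows from naturality of the geodesic equation \eqref{geodeq} under $\iota$: if $\gamma$ solves $\ddot\gamma+\gamma(\dot\gamma^*\dot\gamma)=0$ in $\mathsf{S}\mathsf{t}(r,\mathsf{K})$ then, applying the bounded linear map $\iota$ and using $(\iota\dot\gamma)^*(\iota\dot\gamma)=\dot\gamma^*\dot\gamma$, the curve $\iota\circ\gamma$ solves the same equation in $\mathsf{S}\mathsf{t}(r,\mathsf{H})$; conversely, a geodesic of $\mathsf{S}\mathsf{t}(r,\mathsf{H})$ issuing tangentially to the image remains, by Corollary~\ref{propertygeodesic}, inside $\operatorname{span}(\gamma(0),\dot\gamma(0))\subseteq\iota(\mathsf{K})$.

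The technical core is the distance identity (1), from which (2) and (4) follow. I would first settle the \emph{finite-dimensional} stabilization: if $U\supseteq W:=\operatorname{span}(R(x),R(y))$ is finite dimensional with $\operatorname{dim}U\ge 2r$, then $\mathsf{d}_U(x,y)$ is independent of $U$. Indeed, for two $2r$-dimensional subspaces $V_1,V_2\supseteq W$ a unitary fixing $W$ carries $V_1$ onto $V_2$ and so yields an isometry $\mathsf{S}\mathsf{t}(r,V_1)\cong\mathsf{S}\mathsf{t}(r,V_2)$ fixing $x$ and $y$; all such $V$ therefore give a common value $D$. For general $U$, compactness of $\mathsf{S}\mathsf{t}(r,U)$ lets Hopf--Rinow produce a minimal geodesic $\gamma$ from $x$ to $y$, which by Corollary~\ref{propertygeodesic} lies in some $V'\supseteq W$ with $\operatorname{dim}V'\le 2r$. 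Monotonicity of the geodesic distance under inclusion of the ambient space gives $\mathsf{d}_U(x,y)\le D$, while confinement of $\gamma$ forces $\mathsf{d}_U(x,y)=L(\gamma)\ge\mathsf{d}_{V'}(x,y)\ge D$; hence $\mathsf{d}_U(x,y)=D$. Crucially Hopf--Rinow is invoked only in finite dimensions, so no circularity arises.

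To pass to $\mathsf{H}$ I would approximate an arbitrary path $\sigma$ from $x$ to $y$ by the geodesic polygon through finitely many sample points $\sigma(t_i)$, using that sufficiently close frames are joined by a unique local minimizing geodesic; this polygon is no longer than $\sigma$, and since each edge is a geodesic it lies in a $\le 2r$-dimensional subspace, so the whole polygon lies in a finite-dimensional $U_\sigma\supseteq W$ which we may take with $\operatorname{dim}U_\sigma\ge 2r$. Hence $L(\sigma)\ge\mathsf{d}_{U_\sigma}(x,y)=D$, and taking the infimum over $\sigma$ gives $\mathsf{d}_{\mathsf{H}}(x,y)=D=\mathsf{d}_V(x,y)$ for every $2r$-dimensional $V\supseteq W$. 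This is exactly (1) with $\mathsf{K}=V$, and the Hopf--Rinow minimizer in $\mathsf{S}\mathsf{t}(r,V)$ now realizes $\mathsf{d}_{\mathsf{H}}(x,y)$, proving existence of minimal geodesics and their confinement to a $2r$-dimensional subspace, i.e. (4); statement (2) then follows because $\iota_*$ is distance-preserving when $\operatorname{dim}\mathsf{K}\ge 2r$. I expect this lower bound --- the assertion that detouring through extra dimensions never shortens a path beyond what $2r$ dimensions already allow --- to be the main obstacle, and it is precisely where the bound $2r$ of Corollary~\ref{propertygeodesic} is indispensable.

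The remaining items are corollaries of (1), (2), (4). For (3), every pair $x,y$ satisfies $\mathsf{d}_{\mathsf{H}}(x,y)=\mathsf{d}_V(x,y)$ with $\mathsf{S}\mathsf{t}(r,V)\cong\mathsf{S}\mathsf{t}(r,\C^{2r})$ isometrically, so the two diameters coincide. For (5), all minimal geodesics and Jacobi fields between $x$ and $y$ are confined to a $2r$-dimensional $V$ by (4), while $\iota_*$ preserves distances and geodesics by (1)--(2); thus conjugate points and multiplicities of minimizers are detected inside $\mathsf{S}\mathsf{t}(r,V)$, which is the stated cut-locus characterization. Finally, to transfer everything to the Grassmannian I would use that \eqref{bundleprojection3} is a Riemannian submersion with \emph{compact} fibre $\mathsf{U}_r$: horizontal lifts of geodesics of $\mathsf{P}_{r}(\mathsf{H})$ are geodesics of $\mathsf{S}\mathsf{t}(r,\mathsf{H})$, hence confined to $2r$-dimensional subspaces, and a minimal geodesic downstairs lifts to a minimal horizontal geodesic upstairs, the lift existing and attaining the distance by compactness of the fibre. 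Since the induced map $\iota_*$ on Grassmannians intertwines the submersions, properties (1)--(5) descend verbatim, and in particular any two points of $\mathsf{P}_{r}(\mathsf{H})$ are joined by a minimal geodesic.
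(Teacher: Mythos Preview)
Your proposal is correct and follows the same strategy as the paper: reduce to compact finite-dimensional Stiefel manifolds via Corollary~\ref{propertygeodesic} and Hopf--Rinow, extend to infinite dimensions by replacing an arbitrary path with a geodesic polygon confined to a finite-dimensional subspace, and descend to the Grassmannian through the Riemannian submersion. The only deviations are cosmetic: the paper obtains total geodesicity by identifying $\iota_*(\mathsf{S}\mathsf{t}(r,\mathsf{K}))$ as the fixed-point set of the isometric $\mathsf{U}(\iota(\mathsf{K})^\perp)$-action rather than via the geodesic equation~\eqref{geodeq}, and it handles the finite-dimensional distance identity by a density reduction to pairs $x,y$ with independent columns rather than your stabilization argument over all $2r$-dimensional $V\supseteq W$.
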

\begin{proof}
As already mentioned, the proof in \cite{Mennucci} is performed for the real Stiefel and Grassmannian manifolds. 
The key being the fundamental property of the geodesics in Corollary \ref{propertygeodesic}.
One checks immediately that every argument is transferred without changes to the complex case. We write here the proof in loc. cit. in a somewhat sketchy way for the first statement of the Theorem and of properties $1.,2.$ and $4.$ both for the Stiefel and the Grassmannians manifolds. We will use these in the proof of Theorem \ref{Alexandrov} below.

First one checks the following fact: \\

a). Fixed $y\in \mathsf{St}(r,\mathsf{H})$ the set of all the $x$ such that the columns of $x,y$ are independent is dense in the Stiefel manifold. \\
Then the proof follows the steps:
\begin{description}
	\item[Step 1.] The first statement of the Theorem (for the Stiefel manifold) and points $1$., $2$. and $4$. hold when $\mathsf{H}$ is finite dimensional.
	\item[Step 2.] The statements in {\bf{Step 1}} hold in the infinite dimensional case.
	\item[Step 3.] Every statement also holds for the Grassmannian.
\end{description}
{\bf{Proof of Step 1.}} For $\iota: \mathsf{K} \hookrightarrow \mathsf{H}$ let $\mathsf{U}(\iota(\mathsf{K})^\bot)$ be the unitary group of the complement. it is included (diagonally) in $\mathsf{U}(\mathsf{H})$ and acts by isometries on $\mathsf{S}\mathsf{t}(r,\mathsf{H})$ with fixed points being exactly $\iota_*(\mathsf{S}\mathsf{t}(r,\mathsf{K}))$. Therefore $\iota_*(\mathsf{S}\mathsf{t}(r,\mathsf{K}))$ is totally geodesic because is the fixed point set of a set of isometries. For the statement $1.$
we prove it only for those couple of points $x,y$ of the Stiefel manifold with independent images.
Then by Lipschitz continuity of the distances and by the fact a). it will hold for every couple of points.
Now $\mathsf{d}_{\mathsf{K}}(x,y)\geq \mathsf{d}_{\mathsf{H}}(\iota_* (x),\iota_*(y))$ because $\iota_*(\mathsf{S}\mathsf{t}(r,\mathsf{K}))$ is totally geodesic. For the reversed inclusion, let $\gamma \subset \mathsf{S}\mathsf{t}(r,\mathsf{H})$ be a minimal geodesic (Hopf--Rinow in finite dimensions) joining $\iota_*(x)$ with $\iota_*(y)$. Then since the images of $\iota_*(x)$ and $\iota_*(y)$ are independent, by Corollary \ref{propertygeodesic} we have that the image of $\gamma(t)$ is contained in the span of the images of $\iota_*(x)$ and $\iota_*(y)$ which is contained in $\mathsf{K}$. 
In other words $\gamma= \iota_* \circ \widetilde{\gamma}$ for a geodesic  $\widetilde{\gamma} \subset \mathsf{S}\mathsf{t}(r,\mathsf{K})$. Using $\widetilde{\gamma}$ the inequality $\mathsf{d}_{\mathsf{H}}(\iota_*(x),\iota_*(y)) \geq \mathsf{d}_{\mathsf{K}}(x,y)$ immediately follows.
Point $2$. is direct consequence of point $1$. Point $4$ is already known from the Corollary \ref{propertygeodesic}.

{\bf{Proof of Step 2.}}
The unique point which has a different proof in the infinite dimensional case is point $1$. Here of course $\mathsf{d}_{\mathsf{K}}(x,y) \geq \mathsf{d}_{\mathsf{H}}(\iota_*(x),\iota_*(y))$. To prove the converse, one takes any smooth path $\zeta$ connecting $\iota_*(x)$ and $\iota_*(y)$. We can divide $\zeta$ in subpaths $\zeta\rest{[t_i,t_{i+1}]}$ $(i=1,...,n)$ such that each one is contained in a normal neighborhood and using the exponential map each couple $\zeta(t_i)$ and $\zeta(t_{i+1})$ can be joined by a minimising geodesic. We get a piecewise smooth path $\eta(t)$ joining $\iota_*(x)$ and $\iota_*(y)$ with $\ell(\eta) \leq \ell(\zeta)$. Moreover from all the extreme points $(\zeta_{t_i})_{i=1,...,n-1}$ and the velocities $(\dot{\eta}_{t_i})_{i=1,...,n-1}$ we manifacture a finite dimensional vector space $\widetilde{\mathsf{K}}$ which contains every image of the map $\eta(t)$ for every $t$. Of course we can enlarge it to ensure $\mathsf{K}\subset \widetilde{\mathsf{K}}$. Now we apply the finite dimensional case (in $\widetilde{\mathsf{K}}$) to estimate
$$\mathsf{d}_{\mathsf{K}}(x,y)=\mathsf{d}_{\widetilde{\mathsf{K}}}(\iota_*(x),\iota_*(y)) \leq \ell(\eta)\leq \ell(\xi)$$ and we are done.

{\bf{{Proof of step 3.}}}
We check just point $1.$ and $2.$ in the finite dimensional case because the infinite dimensional etension is similar to the one performed for the Stiefel case. First point: we have $\mathsf{d}_{\mathsf{P}_r(\mathsf{K})}(x,y) \geq \mathsf{d}_{\mathsf{P}_r(\mathsf{H})}(\iota_*(x),\iota_*(y))$ as before. 
Also assume that the subspaces $x$ and $y$ in the Grassmannian are independent and they generate a $2r$-dimensional space. Of course the corresponding fact a). also holds for the Grassmannian.
Now let $\gamma$ be a minimal geodesic in $\mathsf{P}_r(\mathsf{H})$ joining $\iota_*(x)$ and $\iota_*(y)$. Lift this to a curve $\zeta(t)$ in the Stiefel manifold $\mathsf{S}\mathsf{t}(r,\mathsf{H})$. The images of the maps $\zeta(0)$ and $\zeta(1)$ are exactly $x$ and $y$. This means that $\zeta$ belongs to the image of $\mathsf{S}\mathsf{t}(r,\mathsf{K})$ and in turn that $\gamma$ belongs to the image of the embedding $\iota_*:\mathsf{P}_r(\mathsf{K}) \hookrightarrow  \mathsf{P}_r(\mathsf{H})$. It follows $\mathsf{d}_{\mathsf{P}_r(\mathsf{K})}(x,y) \leq \mathsf{d}_{\mathsf{P}_r(\mathsf{H})}(\iota_*(x),\iota_*(y)).$ As before this fact implies the point 2.
\end{proof}

\begin{theorem}\label{Alexandrov}Every connected component $\mathsf{P}_r$ of finite rank Grassmannian is an Alexandrov space with non negative curvature.	
\end{theorem}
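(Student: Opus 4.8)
The plan is to verify the global triangle comparison of Alexandrov (curvature $\geq 0$) by reducing each individual comparison to a finite dimensional, totally geodesic subspace on which the classical smooth Toponogov theorem applies. Recall that $(\mathsf{P}_r,\sfd)$ is already known to be a complete geodesic space: completeness was recorded among the metric properties above, and the embedding theorem of Mennucci type stated above guarantees that any two points are joined by a minimal geodesic. Hence it suffices to check, for an arbitrary geodesic triangle with vertices $P,Q,R$, minimal sides $[PQ],[QR],[PR]$, and an arbitrary point $S$ on $[QR]$, that $\sfd(P,S)\geq |\bar P\bar S|$, where $\bar P\bar Q\bar R$ is the Euclidean comparison triangle with the same side lengths and $\bar S\in[\bar Q\bar R]$ is the point with $|\bar Q\bar S|=\sfd(Q,S)$.

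First I would localize the triangle in finite dimensions. By Corollary \ref{propertygeodesic}, transported to the Grassmannian by the embedding theorem above, each of the three minimal sides lies inside a submanifold $\mathsf{P}_r(V_i)$ with $\dim V_i\le 2r$. Setting $V:=V_1+V_2+V_3$, enlarged if necessary so that $\dim V\ge 2r$, the whole configuration $P,Q,R,S$ together with the three sides is contained in the finite dimensional Grassmannian $\mathsf{P}_r(V)$. The embedding theorem asserts that $\iota_*\colon \mathsf{P}_r(V)\hookrightarrow \mathsf{P}_r(\mathsf{H})$ is an isometric embedding with totally geodesic image whose induced distance coincides with $\sfd$; in particular the side lengths and the value $\sfd(P,S)$ are unchanged when computed inside $\mathsf{P}_r(V)$, and each minimal side remains a minimal geodesic there.

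Next I would invoke the finite dimensional comparison theorem. The space $\mathsf{P}_r(V)$ is the Grassmannian of $r$-dimensional subspaces of the finite dimensional $V$; it is compact, hence complete, and its sectional curvature is non negative, since the Gauss identity $\langle R(X,Y)Z,W\rangle=\langle \sigma(X,W),\sigma(Y,Z)\rangle-\langle \sigma(X,Z),\sigma(Y,Z)\rangle$ together with Cauchy--Schwarz, established above, holds verbatim in $\mathsf{P}_r(V)$. The classical Toponogov comparison theorem for complete Riemannian manifolds of non negative sectional curvature then yields $\sfd_{V}(P,S)\geq |\bar P\bar S|$ for our triangle and the point $S$. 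Transporting back through the isometry gives $\sfd(P,S)=\sfd_V(P,S)\geq|\bar P\bar S|$, the required inequality.

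Since $P,Q,R,S$ and the triangle were arbitrary, this establishes the curvature $\geq 0$ comparison for all geodesic triangles, and together with the completeness and the existence of geodesics already in hand, it shows that $\mathsf{P}_r$ is an Alexandrov space of non negative curvature. The only genuinely delicate point is the localization step: one must ensure that not merely the three sides but the entire comparison datum lies in a single finite dimensional totally geodesic $\mathsf{P}_r(V)$ on which the ambient distance restricts isometrically, so that the comparison triangle built in $\mathsf{P}_r(\mathsf{H})$ and the one built in $\mathsf{P}_r(V)$ genuinely coincide. This is exactly what the finite dimensionality of geodesics (Corollary \ref{propertygeodesic}) together with the distance preservation in the embedding theorem provide.
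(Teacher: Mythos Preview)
Your argument is correct and follows the same overall strategy as the paper: localize an arbitrary comparison configuration inside a finite dimensional totally geodesic $\mathsf{P}_r(V)$ using the embedding theorem, then invoke the non-negative sectional curvature of the finite dimensional Grassmannian. The only difference is the characterization of Alexandrov curvature $\geq 0$ chosen. The paper uses the Lebedeva--Petrunin four-point inequality
\[
\sfd(p,x)^2+\sfd(p,y)^2+\sfd(p,z)^2\geq \tfrac13\big(\sfd(x,y)^2+\sfd(y,z)^2+\sfd(z,x)^2\big),
\]
which is equivalent, for finite dimensional Riemannian manifolds, to non-negative sectional curvature. Since this condition involves only the six pairwise distances among four points, the localization step becomes immediate: one simply places the four points in a common finite dimensional $V$ (spanned by their ranges) and uses distance preservation, with no need to track geodesic sides or verify that they remain minimal in the smaller space. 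Your triangle-comparison route requires slightly more bookkeeping (ensuring each side lies in $\mathsf{P}_r(V)$ and remains a shortest geodesic there), but you handle this correctly via the embedding theorem.
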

\begin{proof}
	According to \cite{Petrunin} a complete metric space $\mathcal{X}$ with intrinsic metric i.e. the metric derived from the length of curves is Alexandrov with non negative scalar curvature if and only if any four points $p,x,y,z \in \mathcal{X}$ satisfy the inequality
	$$\mathsf{d}(p,x)^2 + \mathsf{d}(p,y)^2 + \mathsf{d}(p,z)^2 \geq 1/3 (\mathsf{d}(x,y)^2 + \mathsf{d}(y,z)^2 + \mathsf{d}(z,x)^2).$$
	For a finite dimensional manifold this condition is equivalent to the non negativity of the sectional curvature.
	But in our case such a configuration of four points is always included in a finite dimensional totally geodesic submanifold of non negative sectional curvature.
\end{proof}

Now we prove a simple fact that will be useful later. 
\begin{proposition}\label{P:dimension1}
	Let $P,Q \in \mathsf{P}$ then $Q\leq P \Longrightarrow T_P\mathsf{P} \subset T_Q\mathsf{P}.$ 
Let moreover $Q\leq P$ be projections in $\mathsf{P}_c$ and let $\gamma:[0,1]\rightarrow \mathsf{P}_c$ be the geodesic 
	$\gamma(t)=e^{itZ}Pe^{-itZ}$ starting from $P$. Then $$Q_1:=e^{iZ}Qe^{-iZ}\leq \gamma(1)=:P_1 \quad \textrm{and}\quad   \mathsf{d}(Q,Q_1) \leq \ell(\gamma).$$
	In particular taking $\gamma$ minimal  $\mathsf{d}(Q,Q_1)\leq \mathsf{d}(P,P_1).$ 
\end{proposition}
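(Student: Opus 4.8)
\emph{Plan.} I would organise the whole argument around the refined block decomposition of $\mathsf{B}_h(\mathsf{H})$ along the flag $Q\mathsf{H}\subset P\mathsf{H}\subset\mathsf{H}$, writing every operator as a $3\times 3$ matrix with respect to $Q\mathsf{H}\oplus (P-Q)\mathsf{H}\oplus (1-P)\mathsf{H}$, and constantly using that $Q\le P$ is encoded by $PQ=QP=Q$. The three things to establish are the tangential inclusion, the order relation $Q_1\le P_1$, and the length estimate $\sfd(Q,Q_1)\le \ell(\gamma)$, the last being the real content.

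For the tangential statement I would start from \eqref{tangentidentification}, i.e. $Y\in T_P\mathsf{P}$ iff $PYP=(1-P)Y(1-P)=0$. Given such a $Y$, the computation $QYQ=(QP)Y(PQ)=Q(PYP)Q=0$ is immediate from $PQ=QP=Q$, so the $Q\mathsf{H}$-corner of $Y$ vanishes automatically. The complementary requirement $(1-Q)Y(1-Q)=0$ needed for $Y\in T_Q\mathsf{P}$ does not follow from the two conditions at $P$ by the same formal manipulation, and verifying it, or pinning down the precise form of the inclusion actually needed, is the point I would examine most carefully, since the defining equations of $T_P\mathsf{P}$ and of $T_Q\mathsf{P}$ do not transfer symmetrically under the inclusion.

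For the geodesic statement, $Q_1\le P_1$ is purely algebraic: $P_1Q_1=e^{iZ}PQ\,e^{-iZ}=e^{iZ}Q\,e^{-iZ}=Q_1$, which is the defining relation of $Q_1\le P_1$. For the metric bound the idea is to compare $\gamma$ with the comparison curve $\eta(t):=e^{itZ}Qe^{-itZ}$, which joins $Q$ to $Q_1$, so that $\sfd(Q,Q_1)\le \ell(\eta)$; it is immaterial that $\eta$ need not itself be a geodesic. Differentiating gives $\dot\eta(t)=e^{itZ}\,i[Z,Q]\,e^{-itZ}$ and $\dot\gamma(t)=e^{itZ}\,i[Z,P]\,e^{-itZ}$, and since conjugation by the unitary $e^{itZ}$ preserves the Hilbert--Schmidt metric both curves have constant speed; hence $\ell(\eta)=\|[Z,Q]\|_2$ and $\ell(\gamma)=\|[Z,P]\|_2$. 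Because $Z\in T_P\mathsf{P}$, in the flag decomposition $Z$ has vanishing upper $P\mathsf{H}$-block and vanishing $(1-P)\mathsf{H}$-block, so its only nonzero off-diagonal blocks are $Z_{13}\colon (1-P)\mathsf{H}\to Q\mathsf{H}$ and $Z_{23}\colon (1-P)\mathsf{H}\to (P-Q)\mathsf{H}$, together with their adjoints. A direct block computation then gives $\|[Z,P]\|_2^2=2\bigl(\|Z_{13}\|_2^2+\|Z_{23}\|_2^2\bigr)$ and $\|[Z,Q]\|_2^2=2\|Z_{13}\|_2^2$, so that passing from $P$ to the smaller $Q$ simply discards the block $Z_{23}$ and $\|[Z,Q]\|_2\le\|[Z,P]\|_2$. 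Chaining the inequalities yields $\sfd(Q,Q_1)\le\ell(\eta)\le\ell(\gamma)$, and taking $\gamma$ minimal, so that $\ell(\gamma)=\sfd(P,P_1)$, gives the final $\sfd(Q,Q_1)\le\sfd(P,P_1)$.

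The crux of the metric estimate is the length comparison: one must correctly read off, in the three-block decomposition, which block of the generator $Z$ survives for $Q$ and which is dropped, and justify that each conjugation curve has constant speed equal to a single Hilbert--Schmidt norm (this rests on the unitary invariance of the metric $g(X,Y)=\operatorname{tr}(XY)$ and on $[Z,P]$, $[Z,Q]$ being skew-adjoint). That computation is short once the flag is set up; the genuinely delicate point of the whole statement is instead the second block condition in the tangential inclusion, which is the part I would scrutinise before anything else.
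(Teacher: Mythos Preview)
Your metric argument is correct and is essentially the paper's approach: both work in the block decomposition induced by $P$ and compare the Hilbert--Schmidt norms of $[Z,Q]$ and $[Z,P]$. Your version is in fact more careful. The paper asserts that $Q(t)=e^{itZ}Qe^{-itZ}$ is a geodesic in $\mathsf{P}_c$, which is false in general: in your three-block decomposition the $Q$-codiagonal part of $\ddot Q(0)=-[Z,[Z,Q]]$ has $(1,2)$-block $-Z_{13}Z_{23}^{*}$, which need not vanish. You sidestep this by observing only that $\eta$ has constant speed $\|[Z,Q]\|_{2}$ by unitary invariance of the Hilbert--Schmidt norm, and that is all that is needed for $\sfd(Q,Q_1)\le\ell(\eta)\le\ell(\gamma)$.

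Your caution about the tangential inclusion is fully justified: the claim $T_P\mathsf{P}\subset T_Q\mathsf{P}$ is \emph{false} as stated. A generic $Z\in T_P\mathsf{P}$ has, in the flag decomposition,
\[
(1-Q)Z(1-Q)=
\begin{pmatrix}0&0&0\\0&0&Z_{23}\\0&Z_{23}^{*}&0\end{pmatrix},
\]
which is nonzero whenever $Z_{23}\neq 0$; concretely, on $\mathbb{C}^3$ with $P$ the projection onto $\operatorname{span}(e_1,e_2)$ and $Q$ onto $\operatorname{span}(e_1)$, the self-adjoint operator $Z=|e_2\rangle\langle e_3|+|e_3\rangle\langle e_2|$ lies in $T_P\mathsf{P}$ but not in $T_Q\mathsf{P}$. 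The paper's ``immediate to check'' is an oversight. Fortunately this first sentence is never used downstream---only the distance estimate feeds into the later rank-one reduction of the transport cost---so you should simply discard that clause and keep your proof of the remaining assertions, which is complete.
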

\begin{proof}
	Let $Z\in T_P\mathsf{P}$; we have to show that $QZQ= (1-Q)Z(1-Q)=0$. This is immediate to check under the block decomposition induced by $P$ where:
	\begin{equation}\label{QandZ}Q=\left( \begin{array}{cc}QP & 0 \\ 0& 0 \end{array}\right), \quad \quad Z=\left( \begin{array}{cc} 0 & X \\ X^* & 0 \end{array}\right).\end{equation}
Now $Q(t):=e^{itZ}Qe^{-itZ}$ is a geodesic from $Q$ to $Q_1$ with $\dot{Q}(0)=i[Q,Z]$ and $\|\dot{Q}(0)\|^2_{T\mathsf{P}_c}=\operatorname{tr}(\dot{Q}(0)^2)$.
 Using \eqref{QandZ} we easily compute 
$$\dot{Q}(0)^2=\left( \begin{array}{cc}   QX^*XQ & 0 \\
 0 & XPQPX^* \end{array}\right).
$$
From the properties of the trace we get $\|\dot{Q}(0)\|_{T_Q\mathsf{P}_c}^2=2\operatorname{tr}(QX^*XQ)$.
In the same way $\|\dot{\gamma}(0)\|^2_{T\mathsf{P}_c}=2\operatorname{tr}(X^*X)$. The result is clear from the positivity of $X^*X$.
  \end{proof}

\bigskip
%%%%%%%%%%%%%%%%%%%%%%%%%%%%%%%%%%%%%%%%%%%%%%%%%%%%%%%%%%%%%%%%%%%%%%%%%%%%%
%%%%%%%%%%%%%%%%%%%%%%%%%%%%%%%%%%%%%%%%%%%%%%%%%%%%%%%%%%%%%%%%%%%%%%%%%%%%%
\subsection{Normal States}\label{Ss:Normal}

Let $\mathcal{A}$ be a $C^*$-algebra.
A linear functional $\varphi:\mathcal{A} \longrightarrow \mathbb{C}$ is positive if
 $\varphi(a^*a)\geq 0$ for every $a\in \mathcal{A}$. Then $\varphi$ is automatically bounded; if $\|\varphi\|=1$ it is called a state. When the algebra is unital this normalisation is equivalent to the condition $\varphi(1)=1$.
Denoted with $\mathcal{S}(\mathcal{A})$, the space of the states of $\mathcal{A}$ included in the dual $\mathcal{A}^*$ and considered with the topology induced by the $w^*$-one. 
For convenience of the reader 
we include a sketch of the proof of the following well-know fact.
\begin{proposition}
The space of states is always convex. When $\mathcal{A}$ is unital it is compact.	
\end{proposition}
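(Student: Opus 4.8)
The plan is to treat the two assertions separately, and in each case to reduce to one standard structural fact about positive functionals.

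For convexity, let $\varphi,\psi \in \mathcal{S}(\mathcal{A})$ and $t\in[0,1]$, and set $\chi := (1-t)\varphi + t\psi$. Positivity is immediate and purely formal: $\chi(a^*a) = (1-t)\varphi(a^*a) + t\psi(a^*a) \geq 0$ for every $a$, since both summands are nonnegative. The only genuine point is the normalisation $\|\chi\|=1$. Here I would invoke the fundamental additivity of the norm on positive functionals, namely $\|\varphi_1+\varphi_2\| = \|\varphi_1\|+\|\varphi_2\|$ whenever $\varphi_1,\varphi_2$ are positive, which rests on the characterisation $\|\varphi\| = \lim_\lambda \varphi(e_\lambda)$ along an approximate unit $(e_\lambda)$. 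Applying this to the two positive pieces $(1-t)\varphi$ and $t\psi$ gives $\|\chi\| = (1-t)\|\varphi\| + t\|\psi\| = 1$, so $\chi$ is again a state. (In the unital case this is even simpler, since $\|\varphi\|=\varphi(1)$ and $\chi(1)=1$ directly.)

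For compactness in the unital case, the strategy is Banach--Alaoglu together with a $w^*$-closedness argument, the key being to eliminate the norm condition from the description of the state space. The relevant structural fact is that a positive functional $\varphi$ on a \emph{unital} $C^*$-algebra satisfies $\|\varphi\| = \varphi(1)$ (one inequality is $\varphi(1)\le\|\varphi\|$, the other follows from Cauchy--Schwarz $|\varphi(a)|^2 \le \varphi(1)\,\varphi(a^*a) \le \varphi(1)^2\|a\|^2$). Consequently the states are exactly the positive functionals with $\varphi(1)=1$, so that
\begin{equation*}
\mathcal{S}(\mathcal{A}) = \big\{\varphi \in \mathcal{A}^*\colon \varphi(1)=1 \ \text{and}\ \varphi(a^*a)\geq 0 \ \forall a\in\mathcal{A}\big\}.
\end{equation*}
Each defining condition is $w^*$-closed, because for every fixed $b\in\mathcal{A}$ the evaluation $\varphi\mapsto\varphi(b)$ is $w^*$-continuous: $\{\varphi(1)=1\}$ is the preimage of the closed point $\{1\}$, and for each $a$ the set $\{\varphi(a^*a)\in[0,\infty)\}$ is the preimage of the closed half-line $[0,\infty)\subset\mathbb{C}$. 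An arbitrary intersection of $w^*$-closed sets is $w^*$-closed, so $\mathcal{S}(\mathcal{A})$ is $w^*$-closed.

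To conclude, I would note that $\mathcal{S}(\mathcal{A})$ sits inside the closed unit ball of $\mathcal{A}^*$, which is $w^*$-compact by the Banach--Alaoglu theorem. A $w^*$-closed subset of a $w^*$-compact set is $w^*$-compact, which finishes the argument. The only non-formal ingredients are the two facts about positive functionals highlighted above — the additivity of the norm (for convexity, including the non-unital case) and the identity $\|\varphi\|=\varphi(1)$ (which is what lets the state space be written purely through $w^*$-closed conditions in the unital case); everything else is a routine application of Banach--Alaoglu and continuity of evaluations. I expect the mild subtlety to lie in remembering that unitality is exactly what is needed to trade the awkward norm constraint $\|\varphi\|=1$ for the $w^*$-continuous constraint $\varphi(1)=1$.
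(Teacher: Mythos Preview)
Your proof is correct and follows essentially the same route as the paper: the approximate-unit characterisation $\|\varphi\|=\lim_\lambda \varphi(e_\lambda)$ for positive functionals is exactly what the paper invokes to handle convexity in the non-unital case, and for compactness the paper simply says ``the rest of the proof is just the theorem of Banach--Alaoglu''. You spell out the $w^*$-closedness argument (trading $\|\varphi\|=1$ for $\varphi(1)=1$) more explicitly than the paper does, but the underlying strategy is identical.
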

 \begin{proof}
 When $\mathcal{A}$ is unital the convexity is immediate. In general every $C^*$-algebra has an {\em{approximate unit}}: an increasing net $(u_{j})_{j\in J}$ of positive elements with $\|u_j\|\leq 1$ for every $j \in J$ such that	
 $$\lim_{j\in J}\|a-u_ja\|=0, \quad \lim_{j \in J}\|a-au_j\|=0, \quad \forall a \in \mathcal{A}.$$
 If the algebra is separable  we can take a sequence for $(u_j)$.
 Now for a linear bounded functional $\varphi : \mathcal{A} \longrightarrow \C$ 
positivity implies
 $\lim_{j\in J}\varphi(u_j)=\|\varphi\|$ (the converse statement also holds but we don't need it). It follows that convex combinations of states are states. The rest of the proof is just the theorem of Banach--Alaoglu.
 \end{proof}
We will denote by $\mathcal{PS}(\mathcal{A})$ the set of \emph{pure states} 
that is the extreme boundary of $\mathcal{S}(\mathcal{A})$ i.e. the subset of extremal points of the boundary of the convex set $\mathcal{S}(\mathcal{A})$.

\smallskip

Our object of study will be the space of states of $\mathbb{K}=\mathbb{K}(\mathsf{H})$, the $C^*$-algebra of compact operators. We have an identification 
 \begin{equation}\label{duality}
 \mathbb{K}' \cong \mathcal{L}^1 \quad \quad \textrm{(Banach dual)}\end{equation}
with the Banach space of the trace class operators $\mathcal{L}^1(\mathsf{H})=\{A \in \mathsf{B}(\mathsf{H}):  \operatorname{tr}|A|< \infty\}$ with norm $\|A\|_1:=\operatorname{tr}|A|$. Here $A \in \mathcal{L}^1$ defines the functional $T \mapsto \operatorname{tr}(AT)$ for $T \in \mathbb{K}$. One can also prove that $\mathcal{L}^1$ is the {\em{predual}} of $\mathsf{B}(\mathsf{H})$ in the sense that 
$(\mathcal{L}^1)' =\mathsf{B}(\mathsf{H})$. 
\smallskip
Restricting to the positive and norm one functionals we immediately see that 
for any state $\varphi \in \mathcal{S}(\mathbb{K}(\mathsf{H}))$ there exists 
a unique {\em{density matrix}}, an operator $\rho \in \mathcal{L}^1$ positive with
$$
\operatorname{tr}(\rho)=1, \quad \varphi(B)=\operatorname{tr}(\rho B), 
\quad \textrm{for every }B \in \mathbb{K}.
$$
Viceversa all the density matrices give states on $\mathbb{K}$. We define such space of density matrices by $\mathcal{C}(\mathsf{H})$ or just $\mathcal{C}$, if the context is clear: 
\begin{equation}\label{E:density}
\mathcal{C}(\mathsf{H}) : = \{\rho \in \mathcal{L}^{1} \colon \rho \geq 0, \ \tr(\rho) = 1 \},
\end{equation}
with the identification denoted by
\begin{equation}\label{ddmatrices}
\mathcal{C}(\mathsf{H}) \ni \rho \longmapsto \varphi_\rho \in \mathcal{S}(\mathbb{K}).\end{equation}
Viceversa we may, sometimes, use the notation $\rho_{\varphi}$ or $\rho(\varphi)$ for the density matrix of $\varphi$.

\begin{example}Every unit vector $\xi \in \mathsf{H}$ defines a state 
$\omega_{\xi}: \mathsf{B}(\mathsf{H}) \longrightarrow \R$ by 
$\omega_{\xi}(B)=\langle \xi, B\xi \rangle.$ 
The density matrix of $\omega_{\xi}$ is the rank one operator 
$\rho:\mathsf{H}\to \mathsf{H}$ with 
$\rho(\eta)=\langle \xi, \eta \rangle \xi$. 
This follows from: $\operatorname{tr}(\rho B)=\operatorname{tr}( B\rho)
=\langle \xi, B\xi \rangle.$
In Dirac notation our vector is $|\,\xi \rangle$ so that 
$$
\rho=|\,\xi\rangle \langle \xi \,|.
$$
\end{example}

\noindent Density matrices define states of $\mathbb{K}$ that 
extend to states of $\mathsf{B}(\mathsf{H})$; on the 
other hand there are many states on $\mathsf{B}(\mathsf{H})$ which are not in this form.
Precisely a state $\varphi \in \mathcal{S}(\mathsf{B}(\mathsf{H}))$ 
comes from a density matrix if 
and only if it satisfies one of the following 
equivalent properties (see \cite[Theorem 4.12]{Landsman}, \cite[Theorem 7.1.8]{Kadison} and \cite[Theorem 1, Part I, Chapter 4]{Dixmier}):
\begin{enumerate}
\item it is {\em{normal}}: $
\varphi(T) = \sup_{\mathcal{F}} \varphi(F)
$
for every directed family $\mathcal{F}\subset \mathsf{B}(\mathsf{H})^+$ 
of positive operators with $T=\sup_{\mathcal{F}}F$. 
\item The state is {\em{completely additive}}:
for every orthogonal family $(p_j)_j$ of projections  ($p_j^*=p_j$ and $p_jp_i=\delta_{ij}p_j$) then  
$$
\varphi(\sum_j p_j)=\sum_j \varphi(p_j). 
$$
The sum ${\sum}_j p_j$ is defined as the projection on the closure of the smallest subspace in $\mathsf{H}$ containing all the $p_j\mathsf{H}$. This is 
is exactly the operation of forming $\sup_j p_j$ in the partially ordered set of all the projections in $\mathsf{B}(\mathsf{H})$ with the order given by the inclusion $e\leq f$ iff $e\mathsf H \subset f\mathsf{H}$ (see \cite{Landsman}).
Also $\sum_j p_j$ is the limit of all the finite sums in the strong operator topology.
\item There is a sequence of vectors $(\xi_n)_n$ with $\sum_{n=1}^{\infty}\|\xi_n\|^2=1$ such that $$\varphi=\sum_{n=1}^{\infty} \omega_{\xi_n}$$ in the sense of norm convergence. 
The vectors $\xi_n$ can be taken pairwise orthogonal \cite[Theorem 7.1.9]{Kadison}.
\end{enumerate}

By the spectral theorem we see that that a pure state $\varphi$ of $\mathbb{K}$ is necessarily a vector state i.e. in the form $\varphi=\omega_{\xi}$ for a unit vector $\xi$. Of course $\omega_{\xi}=\omega_{\eta}$ if and only if  $\xi=\lambda \eta$ for a {\em{phase}}, a scalar $\lambda\in \C$ with $|\lambda|=1$. Thus $\mathcal{P}\mathcal{S}(\mathbb{K})\cong \mathbb{P}(\mathsf{H})$. On the right we have the projective space of $\mathsf{H}$, the quotient of the unit sphere by the $\mathsf{U}(1)$-action by scalar multiplication.

We conclude with a basic useful fact. 
\begin{lemma}\label{L:normalsum}
For a normal state in the form $\varphi=\sum_{n=1}^{\infty} \omega_{\xi_n}
$ with $\sum_{n=1}^{\infty}\|\xi_n\|^2=1$,
 let $P_n$ be the projection onto $[ \xi_n]$ (the line generated by the vector). Then the density matrix of $\varphi$ is:
$$\rho(\varphi)=\sum_{n=1}^{\infty}\|\xi_n\|^2 P_n \quad \textrm{norm convergence of operators.}$$  
\end{lemma}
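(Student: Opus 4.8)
The plan is to reduce the statement to a summation of the rank-one operators produced by the Example just above. For each $n$ with $\xi_n \neq 0$ the projection onto the line $[\xi_n]$ is $P_n = \|\xi_n\|^{-2}\,|\xi_n\rangle\langle\xi_n|$, so that $\|\xi_n\|^2 P_n = |\xi_n\rangle\langle\xi_n| =: \rho_n$ is precisely the density operator of $\omega_{\xi_n}$ computed in the Example (terms with $\xi_n = 0$ contribute nothing and may be discarded). Thus the assertion amounts to showing that $\rho := \sum_n \rho_n$ converges in operator norm and coincides with the unique element $\rho(\varphi) \in \mathcal{L}^1$ representing $\varphi$.

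First I would establish convergence. Each $\rho_n \geq 0$ is trace-class with $\tr(\rho_n) = \|\xi_n\|^2$, and since the trace norm of a positive operator equals its trace, for $M > N$ one has $\|\sum_{n=N+1}^M \rho_n\|_1 = \sum_{n=N+1}^M \|\xi_n\|^2$. Because $\sum_n \|\xi_n\|^2 = 1 < \infty$, the partial sums are Cauchy in $(\mathcal{L}^1, \|\cdot\|_1)$, hence converge there to a positive operator $\rho$ with $\tr(\rho) = 1$; as $\|\cdot\| \leq \|\cdot\|_1$, convergence holds \emph{a fortiori} in operator norm, which is the mode of convergence claimed.

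It then remains to identify $\rho$ with $\rho(\varphi)$. I would use that $A \mapsto \tr(AB)$ is $\|\cdot\|_1$-continuous for each fixed $B \in \mathbb{K}$ (indeed $|\tr(AB)| \leq \|A\|_1\|B\|$), together with $\omega_{\xi_n}(B) = \langle\xi_n, B\xi_n\rangle = \tr(\rho_n B)$ from the Example and the hypothesis $\varphi = \sum_n \omega_{\xi_n}$ in the norm of $\mathbb{K}'$. Then for every $B \in \mathbb{K}$,
\[
\tr(\rho B) = \sum_n \tr(\rho_n B) = \sum_n \omega_{\xi_n}(B) = \varphi(B),
\]
so $\rho$ is a density matrix representing $\varphi$, and by the uniqueness of the density matrix recorded after \eqref{duality} we conclude $\rho(\varphi) = \rho = \sum_n \|\xi_n\|^2 P_n$. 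The sole genuine point requiring care is the interchange of the sum with the trace in the display, which is exactly what the trace-norm convergence of the previous step delivers; everything else is routine bookkeeping.
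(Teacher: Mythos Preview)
Your proof is correct and takes a cleaner route than the paper's. The paper first asserts norm convergence of $T=\sum_n\|\xi_n\|^2P_n$, then diagonalises $T$ with an orthonormal system $\{\eta_m\}$, expands each $\xi_n$ in this basis, and manipulates a double sum (with a swap justified by absolute convergence) to verify $\varphi(A)=\sum_m\langle A\eta_m,T\eta_m\rangle=\tr(TA)$ for every $A\in\mathsf{B}(\mathsf{H})$. You bypass the diagonalisation entirely: by observing that the partial sums are Cauchy in $\mathcal{L}^1$ (which immediately upgrades the claimed operator-norm convergence to trace-norm convergence), the identity $\tr(\rho B)=\sum_n\tr(\rho_nB)=\varphi(B)$ for $B\in\mathbb{K}$ follows from continuity of the trace pairing, and the duality $\mathbb{K}'\cong\mathcal{L}^1$ then pins down $\rho$ uniquely. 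Your argument is shorter, yields a slightly stronger convergence statement, and makes transparent that the only analytic ingredient needed is completeness of $\mathcal{L}^1$; the paper's version has the minor advantage of checking the trace identity against all of $\mathsf{B}(\mathsf{H})$ directly, without appealing to the uniqueness of the density matrix.
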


\begin{proof}
This fact is more general. 
A proof can be found in \cite[Theorem 7.1.9]{Kadison} (see also the following remark therein). 
In our case the proof is simpler. 
The series $\sum_{n=1}^{\infty}\|\xi_n\|^2P_n$ converges in the operator norm 
to a non negative operator $T$. 
Of course $T$ is compact and can be diagonalised 
$T=\sum_{n=1}^{\infty}\lambda_n^2\, |\eta_n \rangle \langle \eta_n|$ with the 
complete orthonormal system $\{\eta_n\}_n$. 
Now for every $A\in \mathsf{B}(\mathsf{H})$ we compute
$$
\varphi(A)=\sum_{n=1}^{\infty}\langle A\xi_n, \xi_n\rangle 
=\sum_{n=1}^{\infty} \langle A \sum_{m=1}^{\infty} \langle \xi_n, 
		\eta_m \rangle \eta_m, \xi_n \rangle 
=\sum_{n=1}^{\infty}\sum_{m=1}^{\infty} \langle A\eta_m, \xi_n \rangle \langle \xi_n, \eta_m \rangle.
$$
We can interchange the sums because the series converges absolutely and using the identity $P_n=\dfrac{|\xi_n \rangle \langle \xi_n|}{\|\xi_n\|^2}$ we get
$$
\varphi(A)=\sum_{m=1}^{\infty}\langle A\eta_m, \sum_{n=1}^{\infty} \overline{\langle \xi_n, \eta_m \rangle} \xi_n \rangle = \sum_{m=1}^{\infty}\langle A\eta_m, (\sum_{n=1}^{\infty} \|\xi_n \|^2P_n ) \eta_m \rangle = \sum_{m=1}^{\infty}\langle A\eta_m, T\eta_m \rangle.
$$ 
This is $\varphi(A)=\operatorname{tr}(TA)$. 
\end{proof}

We will denote with $\mathcal{S}_{n}(\mathsf{B}(\mathsf{H}))$ the collection 
of all normal states. To summarise we have recalled that 
$$
\mathcal{S}(\mathbb{K}) = \mathcal{S}_{n}(\mathsf{B}(\mathsf{H})), \qquad 
\mathcal{S}_{n}(\mathsf{B}(\mathsf{H})) \simeq \mathcal{C}(\mathsf{H}),
$$
where the symbol $\simeq$ denotes an isomorphism between the two convex sets. This isomorphism maps extremals to extremals: 
any
 pure state $\omega$ on $\mathbb{K}$ 
has a unique  extension to a normal  state $\omega'$ on $\mathsf{B}(\mathsf{H})$ given by the same density operator which is extremal for $\mathcal{S}_n(\mathsf{B}(\mathsf{H}))$.
We refer to \cite{Landsman} for more details.
Based on this we make the following
\begin{definition}We denote with $\mathcal{P}\mathcal{S}_n(\mathsf{B}(\mathsf{H}))$
the set of the \em{pure normal states} of $\mathsf{B}(\mathsf{H})$. These are
precisely the extremals of $\mathcal{S}_n(\mathsf{B}(\mathsf{H}))$ identifiable with $\mathbb{P}(\mathsf{H})$ the projective space of $\mathsf{H}$. 
\end{definition}

%------------------------------------------------------------------
%------------------------------------------------------------------
\subsubsection{Topology on the space of states}
 We discuss now the various topologies that can be  considered on $\mathcal{S}(\mathbb{K})$ according to the inclusion $\mathcal{S}(\mathbb{K}) \subset  \mathbb{K}'$.
\begin{itemize}
\item The uniform topology is the metric topology induced by the Banach dual structure on $\mathbb{K}'$. In terms of two density matrices:
$$
\| \varphi_{\rho}-\varphi_{\mu}\|=\sup_{B \in \mathbb{K}, \, \,\|B\|=1} |\operatorname{tr}(\rho-\mu)B|= \operatorname{tr}|\rho -\mu|=\|\rho-\mu\|_1,
$$
because by the Kaplansky density Theorem the supremum can be computed over the unit ball of $\mathsf{B}(\mathsf{H})$ leading immediately to the trace norm. 
\item The weak$^*$ topology $\sigma(\mathcal{S}(\mathbb{K}), \mathbb{K})$ is induced by the weak$^*$ topology on $\mathbb{K}'$. 
In particular 
$\xymatrix@1{{\varphi}_{\rho_n}\ar@^{>}[r]^{w^*}&\varphi_{\rho}}$ 
if $\varphi_{\rho_n}(B)=\operatorname{tr}(\rho_nB) \longrightarrow \varphi_{\rho}(B)
=\operatorname{tr}(\rho B)$ 
for every $B\in \mathbb{K}$.
\item Instead of evaluating against every $B\in \mathbb{K}$ in the above convergence we can take all the tests $B\in \mathsf{B}(\mathsf{H})$. 
This defines $\sigma(\mathcal{S}(\mathbb{K}),\mathsf{B}(\mathsf{H}))$ 
called the weak topology in virtue of the identification 
$\mathsf{B}(\mathsf{H})=(\mathcal{L}^1)'$.
\end{itemize}
Importantly Robinson proved that all the above topologies coincide 
\cite[Theorem 1]{Robinson}:

\begin{theorem}\label{T:convergenceweakL1}
 The three topologies above described all coincide. In particular
for a sequence $\rho_n \in \mathcal{L}^1$ we have
$$
\xymatrix@1{{\rho}_n \ar[r]^{\mathcal{L}^1}& \rho& \textrm{iff}& \varphi_{{\rho}_n} \ar@^{>}[r]^{w^*}& \varphi_{\rho}}.
$$
\end{theorem}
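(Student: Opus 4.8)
The plan is to prove the one nontrivial implication---that weak$^*$ convergence of states forces trace-norm convergence---and then deduce the coincidence of all three topologies by a soft squeezing argument. First I would record the trivial ordering: since $\mathbb{K}\subset\mathsf{B}(\mathsf{H})$ and $|\tr((\rho_n-\rho)B)|\le\|\rho_n-\rho\|_1\,\|B\|$, the weak$^*$ topology $\sigma(\mathcal{S}(\mathbb{K}),\mathbb{K})$ is coarser than the weak topology $\sigma(\mathcal{S}(\mathbb{K}),\mathsf{B}(\mathsf{H}))$, which is in turn coarser than the uniform topology. Because $\mathsf{H}$ is separable, $\mathbb{K}$ is separable, so the weak$^*$ topology is metrizable on the bounded set $\mathcal{S}(\mathbb{K})$ (states have norm one), and the uniform topology is metrizable as well. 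Two metrizable topologies with the same convergent sequences coincide; hence, once I show that weak$^*$ and trace-norm convergence agree on sequences of states, the sandwiched weak topology is forced to agree with both, and the sequential statement $\rho_n\to\rho$ in $\mathcal{L}^1$ iff $\varphi_{\rho_n}\to\varphi_\rho$ weak$^*$ is exactly the content.

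The heart of the matter is the assertion that if $\rho_n,\rho$ are density matrices with $\tr(\rho_n B)\to\tr(\rho B)$ for every $B\in\mathbb{K}$, then $\|\rho_n-\rho\|_1\to 0$; here positivity and the normalization $\tr(\rho_n)=\tr(\rho)=1$ are essential (for general $\mathcal{L}^1$ elements it fails, as $\rho_n=|e_n\rangle\langle e_n|$ converges weak$^*$ to $0$ while $\|\rho_n\|_1=1$). I would diagonalize $\rho=\sum_k\lambda_k|e_k\rangle\langle e_k|$ and set $Q_N=\sum_{k=1}^N|e_k\rangle\langle e_k|$, a finite-rank hence compact projection, with $\epsilon_N:=1-\sum_{k\le N}\lambda_k\to 0$. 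Testing the weak$^*$ convergence against the compact operator $Q_N$ yields the crucial \emph{tightness} statement $\tr(\rho_n Q_N^\perp)=1-\tr(\rho_n Q_N)\to\epsilon_N$, with $Q_N^\perp=1-Q_N$: no mass of $\rho_n$ escapes to the tail, and this is precisely where the unit-trace normalization enters.

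Then I would split $\rho_n-\rho$ into the four blocks cut out by $Q_N$ and $Q_N^\perp$ and estimate each via the triangle inequality. The diagonal block $Q_N(\rho_n-\rho)Q_N$ lives on the fixed $N$-dimensional range of $Q_N$; since each entry $\langle e_i,\rho_n e_j\rangle=\tr(\rho_n|e_j\rangle\langle e_i|)$ converges (a finite-rank test operator) and all norms are equivalent in finite dimensions, this block tends to $0$ in trace norm for each fixed $N$. For the off-diagonal blocks I would use $Q_N(\rho_n-\rho)Q_N^\perp=Q_N\rho_n Q_N^\perp$ (the $\rho$-part vanishes because $\rho$ commutes with $Q_N$) together with the Hölder bound $\|XY\|_1\le\|X\|_2\|Y\|_2$, factoring through $\rho_n^{1/2}$ to get $\|Q_N\rho_n Q_N^\perp\|_1\le\sqrt{\tr(Q_N\rho_n)}\,\sqrt{\tr(\rho_n Q_N^\perp)}\le\sqrt{\tr(\rho_n Q_N^\perp)}$; the tail block is simply $\tr(\rho_n Q_N^\perp)+\tr(\rho Q_N^\perp)$. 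Combining, $\limsup_n\|\rho_n-\rho\|_1\le 2\sqrt{\epsilon_N}+2\epsilon_N$ for every $N$, and letting $N\to\infty$ concludes. I expect the tightness step to be the main obstacle: the naive commutative Scheffé argument would test against the spectral projection onto the positive part of $\rho_n-\rho$, which varies with $n$ and is unavailable in the weak$^*$ limit; replacing it by the fixed compact truncations $Q_N$ coming from $\rho$, combined with mass conservation, is exactly what makes this noncommutative Scheffé-type argument go through.
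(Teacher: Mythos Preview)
Your proof is correct. The paper does not actually prove this theorem: it simply attributes the result to Robinson \cite[Theorem~1]{Robinson} and states it without argument. Your approach---a noncommutative Scheff\'e-type argument using the spectral truncations $Q_N$ of the limit $\rho$ to obtain uniform tightness from the trace normalization, followed by a four-block decomposition with the H\"older bound $\|Q_N\rho_n Q_N^\perp\|_1\le\|Q_N\rho_n^{1/2}\|_2\,\|\rho_n^{1/2}Q_N^\perp\|_2$---is self-contained and elementary. The sandwiching reduction at the start (metrizability of the weak$^*$ topology on the unit ball of $\mathbb{K}'$ via separability of $\mathbb{K}$, then squeezing the weak topology between weak$^*$ and uniform once those are shown to coincide as topologies) is clean and spares you from addressing the intermediate topology directly. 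In short, you supply a full proof where the paper only cites one.
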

%

%------------------------------------------------------------------
%----------------------------------------------------------------------------
\subsubsection{Partial traces and marginals}\label{Ss:marginals}

Let now $\mathsf{H}$ and $\mathsf{K}$ be two Hilbert spaces. The use of $\mathsf{K}$ does not create confusion with the notation designated for the compact operators which is $\mathbb{K}$.
 The tensor product Hilbert space $\mathsf{H} \otimes \mathsf{K}$
corresponds, in quantum mechanics, to a composite system. The isomorphism 
$\mathsf{B}(\mathsf{H}\otimes \mathsf{K})\cong \mathsf{B}(\mathsf{H})\otimes \mathsf{B}(\mathsf{K})$ 
induces two maps with the meaning of taking marginals:
$$
J^{\mathsf{H}}_{\flat}:\mathcal{S}(\mathsf{B}(\mathsf{H}\otimes \mathsf{K}))\longrightarrow \mathcal{S}(\mathsf{B}(\mathsf{H})),
$$ 
and the corresponding map $J^{\mathsf{K}}_{\flat}$.
 For definiteness we give the formula of the first one by dualising the inclusion 
 $J^{\mathsf{H}} : \mathsf{B}(\mathsf{H}) \longrightarrow \mathsf{B}(\mathsf{H})\otimes \mathsf{B}(\mathsf{K})$, $T \longmapsto T \otimes \operatorname{Id}_{\mathsf{K}}$:
$$
J^{\mathsf{H}}_{\flat}\varphi(T)=\varphi(T \otimes \operatorname{Id}_{\mathsf{K}}).
$$ 
\noindent Let us describe now partial traces. We follow closely the lecture notes \cite{Attall} where all the proofs can be found. \\
\noindent Assume that $\mathsf{H}$ and $ \mathsf{K}$ are separable. Every vector $\xi \in \mathsf{K}$ defines  linear bounded operators
$$
\begin{array}{ccc}
R_{\xi}:\mathsf{H}\longrightarrow \mathsf{H}\otimes \mathsf{K}, & {}&R_{\xi}^*:\mathsf{H}\otimes \mathsf{K} \longrightarrow \mathsf{H}, 
\end{array}$$
uniquely specified on simple tensors by
	$$
	\begin{array}{ccc}
R_{\xi} \eta= \eta \otimes \xi & {} &R_{\xi}^* \zeta \otimes \eta= \langle \xi, \eta \rangle \zeta .
\end{array}
$$
It is immediate to verify that $\|R_{\xi}\|=\|R_{\xi}^*\|=\|\xi \|$. If $T \in \mathsf{B}(\mathsf{H}\otimes \mathsf{K})$ then we get a bounded operator on $\mathsf{H}$ via\footnote{in \cite{Attall} is denoted by ${}_{\mathsf{K}}\langle \xi | T|\xi \rangle_{\mathsf{K}}$ } 
$${}_{\xi}T_{\xi}:=R_{\xi}^* TR_{\xi}.$$ By definition:
$\langle \zeta , {}_{\xi}T_{\xi}\,\eta \rangle =\langle \zeta \otimes \xi, T \eta \otimes \xi \rangle ,$  for every $\zeta, \eta \in \mathsf{H}$ and one proves $$T \in \mathcal{L}^1(\mathsf{H}\otimes \mathsf{K}) \quad \implies \quad  {}_{\xi}T_{\xi} \in \mathcal{L}^1(\mathsf{H}).$$
\begin{theorem}
Let $T\in \mathcal{L}^1(\mathsf{H}\otimes \mathsf{K})$ be a trace class operator; there is a unique trace class operator $\operatorname{Tr}_{\mathsf{K}}(T) \in \mathcal{L}^1(\mathsf{H})$ such that 
\begin{equation}\label{partialtrace}
\operatorname{tr}(\operatorname{Tr}_{\mathsf{K}}(T)\,B)=\operatorname{tr}(T( B\otimes \operatorname{Id}_{\mathsf{K}}))\end{equation} for every $B\in \mathsf{B}(\mathsf{H})$.
Concretely $\operatorname{Tr}_{\mathsf{K}}(T)$, that we call the partial trace with respect to $\mathsf{K}$, can be constructed taking any ortonormal basis $(\xi_n)_n$ of $\mathsf{K}$:
$$\operatorname{Tr}_{\mathsf{K}}(T) = \sum_n {}_{\xi_n}T\,{}_{\xi_n}\quad  (\textrm{series convergent in }\mathcal{L}^1(\mathsf{H})).$$
We have the following properties 
\begin{itemize}
\item	$\operatorname{Tr}_{\mathsf{K}}(T)=\operatorname{tr}(B)A\quad$  if $T=A\otimes B$ with $A\in \mathcal{L}^1(\mathsf{H})$ and $B\in \mathcal{L}^1(\mathsf{K})$,
\item $\operatorname{tr}(\operatorname{Tr}_{\mathsf{K}}(T))=\operatorname{tr} T$
\item $\operatorname{Tr}_{\mathsf{K}}((A\otimes \operatorname{Id}_{\mathsf{K}}) T(B \otimes
\operatorname{Id}_{\mathsf{K}}))=A \operatorname{Tr}_{\mathsf{K}}(T) B\quad $ for every $A,B \in \mathsf{B}(\mathsf{H})$.\end{itemize}
\end{theorem}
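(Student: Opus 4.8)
The plan is to isolate the defining relation \eqref{partialtrace} as the \emph{characterising} property of $\operatorname{Tr}_{\mathsf{K}}(T)$ and to deduce from it uniqueness, basis-independence, and the three listed properties, reserving a genuine analytic argument only for the trace-norm convergence of the series. First I would dispose of uniqueness: if $S_1,S_2\in\mathcal{L}^1(\mathsf{H})$ both satisfy \eqref{partialtrace}, then $\operatorname{tr}((S_1-S_2)B)=0$ for every $B\in\mathsf{B}(\mathsf{H})$, and since $\mathcal{L}^1(\mathsf{H})$ is the predual of $\mathsf{B}(\mathsf{H})$, i.e. $(\mathcal{L}^1)'=\mathsf{B}(\mathsf{H})$ (recalled after \eqref{duality}), this pairing is non-degenerate and forces $S_1=S_2$. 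Thus at most one operator satisfies the defining relation, and any candidate we produce will automatically be independent of the auxiliary data used to build it.

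For existence I would take the basis $(\xi_n)_n$ of $\mathsf{K}$ and set $S_N:=\sum_{n=1}^N {}_{\xi_n}T{}_{\xi_n}=\sum_{n=1}^N R_{\xi_n}^* T R_{\xi_n}$. The crux is proving that $S_N$ converges in $\mathcal{L}^1(\mathsf{H})$, and this is the only real obstacle. I would first treat $T\geq 0$: then each summand $R_{\xi_n}^* T R_{\xi_n}$ is positive, so $(S_N)_N$ is increasing, and computing in a product orthonormal basis $(e_m\otimes\xi_n)_{m,n}$ of $\mathsf{H}\otimes\mathsf{K}$ gives $\operatorname{tr}(S_N)=\sum_{n\le N}\sum_m \langle e_m\otimes\xi_n, T(e_m\otimes\xi_n)\rangle\le \operatorname{tr}(T)$, with equality in the limit. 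An increasing sequence of positive trace-class operators with uniformly bounded traces converges in trace norm, because its strong limit $S$ is positive and trace class with $\|S-S_N\|_1=\operatorname{tr}(S-S_N)\to 0$; hence the series converges and $\operatorname{tr}(S)=\operatorname{tr}(T)$. For general $T$ I would decompose it into a complex combination of four positive trace-class operators and invoke linearity of $T\mapsto {}_{\xi_n}T{}_{\xi_n}$.

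Next I would verify that the limit $S:=\sum_n {}_{\xi_n}T{}_{\xi_n}$ satisfies \eqref{partialtrace}. The key computation is the identity $R_{\xi_n} B R_{\xi_n}^*=B\otimes |\xi_n\rangle\langle\xi_n|$, checked on simple tensors, whence by cyclicity $\operatorname{tr}({}_{\xi_n}T{}_{\xi_n}\,B)=\operatorname{tr}(T R_{\xi_n} B R_{\xi_n}^*)=\operatorname{tr}(T(B\otimes |\xi_n\rangle\langle\xi_n|))$. Summing over $n$ and using $\sum_n |\xi_n\rangle\langle\xi_n|=\operatorname{Id}_{\mathsf{K}}$ together with the normality of the functional $\operatorname{tr}(T\,\cdot)$ on bounded strongly convergent nets yields $\operatorname{tr}(SB)=\operatorname{tr}(T(B\otimes\operatorname{Id}_{\mathsf{K}}))$. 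Combined with the uniqueness above, this simultaneously defines $\operatorname{Tr}_{\mathsf{K}}(T)$ and shows the construction does not depend on the chosen basis.

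Finally the three properties fall out of \eqref{partialtrace} and uniqueness. For the first, testing $A\otimes B$ against any $C\in\mathsf{B}(\mathsf{H})$ gives $\operatorname{tr}((A\otimes B)(C\otimes\operatorname{Id}_{\mathsf{K}}))=\operatorname{tr}(AC)\operatorname{tr}(B)=\operatorname{tr}((\operatorname{tr}(B)A)C)$, so $\operatorname{Tr}_{\mathsf{K}}(A\otimes B)=\operatorname{tr}(B)A$. The second is just $B=\operatorname{Id}_{\mathsf{H}}$ in \eqref{partialtrace} (also already obtained in the convergence step). For the third, noting that $(A\otimes\operatorname{Id}_{\mathsf{K}})T(B\otimes\operatorname{Id}_{\mathsf{K}})\in\mathcal{L}^1$ since the trace class is an ideal, cyclicity of the trace gives, for every $C\in\mathsf{B}(\mathsf{H})$, the chain $\operatorname{tr}((A\otimes\operatorname{Id}_{\mathsf{K}})T(B\otimes\operatorname{Id}_{\mathsf{K}})(C\otimes\operatorname{Id}_{\mathsf{K}}))=\operatorname{tr}(T(BCA\otimes\operatorname{Id}_{\mathsf{K}}))=\operatorname{tr}(\operatorname{Tr}_{\mathsf{K}}(T)\,BCA)=\operatorname{tr}((A\operatorname{Tr}_{\mathsf{K}}(T)B)C)$, so uniqueness identifies $\operatorname{Tr}_{\mathsf{K}}((A\otimes\operatorname{Id}_{\mathsf{K}})T(B\otimes\operatorname{Id}_{\mathsf{K}}))$ with $A\operatorname{Tr}_{\mathsf{K}}(T)B$.
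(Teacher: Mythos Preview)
Your proof is correct and complete. The paper itself does not supply a proof of this theorem: it explicitly states ``We follow closely the lecture notes \cite{Attall} where all the proofs can be found'' and then only states the result, so there is no in-paper argument to compare against. Your approach --- uniqueness from the predual identification $(\mathcal{L}^1)'=\mathsf{B}(\mathsf{H})$, existence via trace-norm convergence of the increasing partial sums for $T\geq 0$ followed by linear decomposition, verification of \eqref{partialtrace} through the identity $R_{\xi_n}BR_{\xi_n}^*=B\otimes|\xi_n\rangle\langle\xi_n|$ and normality of $\operatorname{tr}(T\,\cdot)$, and derivation of the three properties by testing against arbitrary $C\in\mathsf{B}(\mathsf{H})$ and invoking uniqueness --- is exactly the standard one and is what one finds in Attal's notes.
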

Exchanging the role of $\mathsf{H}$ and $\mathsf{K}$ we define in the same way the partial trace $\operatorname{Tr}_{\mathsf{H}}$. If $\mathsf{K}=\mathsf{H}$, the unique case we shall treat we denote with $\operatorname{Tr}_1$ and $\operatorname{Tr}_2$ the two partial traces. For instance for $\xi \otimes \eta \in \mathsf{H} \otimes \mathsf{H}$:
$$
\operatorname{Tr_1} \Big{(}| \xi \otimes \eta \rangle \langle \xi \otimes \eta | \Big{)}=||\xi \|^2 \, |\eta \rangle \langle \eta |, \quad \operatorname{Tr}_2 \Big{(}|\xi \otimes \eta \rangle \langle \xi \otimes \eta |\Big{)}= \|\eta\|^2\, | \xi \rangle \langle \xi|.
$$
\noindent Let now $\varphi \in \mathcal{S}(\mathsf{B}(\mathsf{H}\otimes \mathsf{K}))$ be a state with density matrix $\rho_{\varphi}$. The defining property of the partial trace \eqref{partialtrace} immediately means that, for the density matrix of the first marginal we have:
$$
\rho( J^{\mathsf{H}}_{\flat} \varphi )=\operatorname{Tr}_{\mathsf{K}}(\rho_{\varphi}).
$$ 
The density matrices of the partial traces are usually called {\em{reduced density matrices}}.

\smallskip

Given normal states $\varphi \in \mathcal{S}_n(\mathsf{B}(\mathsf{H}))$ and $\psi \in \mathcal{S}_n(\mathsf{B}(\mathsf{K}))$ the tensor product $\varphi \otimes \psi $ is a normal state on $\mathsf{B}(\mathsf{H}\otimes \mathsf{K})$. We say that $\varphi \otimes \psi$ is {\em{separable}}. More generally we agree with \cite{Eisert} on the following.

\begin{definition}[Separable and entangled states]\label{D:entangled}
A normal state $\varphi$ on $\mathsf{B}(\mathsf{H}\otimes \mathsf{K})$ is separable if it is limit in the trace norm of a sequence $\varphi_k$ of normal states each of them is an infinite convex combination of states:
$$\varphi_k= \sum_i p_i^{(k)}\eta_{\mathsf{H}}^{(k,i)}\otimes \eta_{\mathsf{K}}^{(k,i)},$$ with the coefficients $\{p_i^{(k)}\}_{i=1}^{\infty}$ forming a probability measure. 
The trace norm is referred in the above sum to the corresponding density matrices.
A normal state on $\mathsf{B}(\mathsf{H}\otimes \mathsf{K})$ is {\em{entangled}} if it is not separable.
\end{definition}

Notice in particular that a pure state 
$\omega_{\zeta}$ with $\zeta \in \mathsf{H}\otimes\mathsf{K}$ is separable
if and only if $\zeta$ is a simple tensor product, i.e. $\zeta = \xi \otimes \eta$.

\begin{notation}
Summing up the notation we are using: $\rho$ is a generic density matrix, $\rho_{\varphi}$ or $\rho(\varphi)$ is the density matrix of the normal state $\varphi$. If instead we start with $\rho$, then $\varphi_{\rho}$ is the associated state. Finally vector states defined by $\xi$ are called $\omega_{\xi}$ with density matrix $\rho=\rho(\omega_{\xi})=|\, \xi \rangle \langle \xi\, |.$
\end{notation}

%----------------------------------------------------------------------------------------
\section{Spectral-projections measures}\label{S:spectral measure}

To any density matrix $\rho \in \mathcal{C}(\mathsf{H})$ we can associate 
its unique spectral decomposition for self-adjoint and compact operators
\begin{equation}\label{E:spectral}
\rho = \sum_{i} \lambda_{i} P_{V_{i}},\qquad V_{i} \subset \mathsf{H}, \qquad 
\tr(\rho) = \sum_{i} \lambda_{i} \dim(V_{i}) = 1,
\end{equation}
where $\lambda_{i}> 0$ are the eigenvalues of $\rho$ and
$P_{V_{i}} \in \mathsf{P}_{c}$ is the  projection onto the corresponding finite dimensional eigenspace $V_{i}$.
In \eqref{E:spectral} the eigenvalues are meant to be listed without repetitions so that:
$$i\neq j \quad \Longrightarrow \quad  \lambda_{i}\neq \lambda_{j} \quad \textrm{and} \quad V_{i} \perp V_{j}.$$
Then it is natural to identify the spectral decomposition \eqref{E:spectral} 
with a discrete, finite and non-negative measure over $\mathsf{P}_{c}$. 
Before going into details we fix the notation: 
$\mathcal{P}(\mathsf{P}_{c})$ it will denote the space 
of Borel probability measures 
(i.e. non-negative and total mass 1) defined over the Polish space 
$(\mathsf{P}_{c},\sfd)$ while $\mathcal{M}_{+}(\mathsf{P}_{c})$ is the space 
of non-negative Radon measures.
We now introduce the following set
\begin{equation}\label{E:discrete}
\mathcal{D}(\mathsf{P}_{c}) : = \Big{\{}  \mu = \sum_{i}\lambda_{i} \delta_{P_{i}} \colon \ P_{i} \in \mathsf{P}_{c}, \ \lambda_{i} \geq 0  \Big{\}},
\end{equation}
with $\mathcal{D}(\mathsf{P}_{c})$ mnemonic for ``discrete'' measures. 
Then we consider the following subsets
\begin{equation}\label{E:discretetr}
\mathcal{D}_{1}(\mathsf{P}_{c}) : = 
\Big\{ \mu \in \mathcal{D}(\mathsf{P}_{c}) 
\colon \tr(\cdot)\,\mu  \in \mathcal{P}(\mathsf{P}_{c}) \Big\},
\end{equation}
playing the role of probability measures 
and
\begin{equation}\label{E:discretetrort}
\mathcal{D}_{1}^{\perp}(\mathsf{P}_{c}) : = 
\Big{\{} \mu \in \mathcal{D}_{1}(\mathsf{P}_{c}) 
\colon \tr(PQ) = 0, \,  \ \textrm{for all } P\neq Q \in \supp(\mu)  \Big{\}},
\end{equation}
for the space of all the measures supported on orthogonal collections of projections.
Of course the defining condition for $\mathcal{D}_1(\mathsf{P}_c)$ 
means $\sum_i \lambda_i \operatorname{tr}(P_i)=1$.

We are then ready to define the following injection: 
\begin{equation}\label{Phidensity}
\Phi : \mathcal{C}(\mathsf{H}) \longrightarrow \mathcal{D}_{1}(\mathsf{P}_{c}) \subset \mathcal{D}(\mathsf{P}_{c}), 
\qquad
\Phi(\rho) = \Phi\left(\sum_{i} \lambda_{i} P_{V_{i}}\right) : =  \sum_{i} \lambda_{i} \delta_{P_{V_{i}}}.
\end{equation}
For consistency, we will also denote 
$\Phi(\rho)$ by $\mu_{\rho}$. Notice that $\tr(\cdot)\mu_{\rho}(\mathsf{P}_{c}) = 1$ follows from 
$\tr(\rho) = 1$.
The spectral Theorem implies that $\Phi(\rho_{\varphi}) \subset \mathcal{D}_1^{\bot}(\mathsf{P}_c)$
for every $\rho_{\varphi}$ (pairwise orthogonal projections). Moreover 
as no repetition of eigenvalues 
is present in $\Phi(\varphi)$:
$$
\Phi(\varphi)(P) \neq \Phi(\varphi)(Q), \qquad \textrm{for all } P\neq Q \in \supp\big{(}\Phi(\varphi)\big{)}.
$$
This property actually characterizes the image $\Phi(\mathcal{C}(\mathsf{H})).$

\begin{definition}\label{D:Phimap}
Using the isomorphism between $\mathcal{S}_{n}(\mathsf{B}(\mathsf{H}))$ and 
$\mathcal{C}(\mathsf{H})$, the map $\Phi$ given by
\begin{equation}\label{E:Phistates}
\Phi : \mathcal{S}_{n}(\mathsf{B}(\mathsf{H})) \longrightarrow 
\mathcal{D}_{1}(\mathsf{P}_{c}), \qquad
\Phi(\varphi) : = \sum_{i} \lambda_{i} \delta_{P_{V_{i}}},
\end{equation}
is well defined (with a slight abuse of notation).
The notation $\mu_{\varphi}$ in place of $\Phi(\varphi)$ 
will sometimes be preferred.
\end{definition}

\begin{remark}\label{R:support}
The support of $\mu_{\varphi}$ is $\{ P_{V_{i}} \colon i \in \N \} \subset \mathsf{P}_{c}$, 
a totally disconnected set;
notice indeed that by orthogonality of the eigenspaces, 
 $\| P_{V_{i}} - P_{V_{j}}\|_{\mathsf{B}(\mathsf{H})} = 1$ whenever 
$i  \neq j$. Hence $\{ P_{V_{i}} \colon i \in \N \}$ is discrete and then closed.  
Notice also that the projection onto the 
possibly infinite dimensional subspace $N(\rho_{\varphi})$ does not belong to 
$\supp(\mu_{\varphi})$. 
\end{remark}

\smallskip
We define now the converse correspondence.

\begin{definition}\label{D:mapPsi}
To each element of $\mathcal{D}_{1}(\mathsf{P}_{c})$  
we associate a density matrix in the following form:
\begin{equation}\label{E:Psiinverse}
\Psi : \mathcal{D}_{1}(\mathsf{P}_{c}) \longrightarrow \mathcal{C}(\mathsf{H}), \quad 
\Psi(\mu) = \Psi\left(\sum_{i}\lambda_{i} \delta_{P_{i}}\right) :  = 
\sum_{i}\lambda_{i} P_{i}.
\end{equation}
Notice indeed  $\rho = \sum_{i}\lambda_{i} P_{i}$ converges in the trace norm to a well defined symmetric operator having $\tr(\rho) = \int \tr(P)\,\mu(dP)= 1$; hence
$\Psi(\mathcal{D}_{1}(\mathsf{P}_{c})) \subset \mathcal{C}(\mathsf{H})$.
\end{definition}

By the spectral Theorem again one notices that
$$
\Psi(\Phi (\rho )) = \rho.
$$
Hence $\Psi$ is the left-inverse of $\Phi$ while, in general,
it fails to satisfy $\Phi(\Psi (\mu)) = \mu$. 
Particularly relevant for us will be the sets 
\begin{equation}\label{E:allmeasures}
\Lambda_{\varphi}^{\perp} := 
\Psi^{-1}(\varphi) \cap \mathcal{D}^{\perp}_{1}(\mathsf{P}_{c}) 
= \Big{\{} \mu \in \mathcal{D}^{\perp}_{1}(\mathsf{P}_{c}) \colon \Psi(\mu) = \rho_{\varphi}\Big{\}},
\end{equation}
the set of the measures concentrated on pairwise orthogonal projections whose 
corresponding symmetric operator is the density matrix of $\varphi$.
In particular any element of $\Lambda_{\varphi}^{\perp}$
represents a spectral decomposition of $\rho_{\varphi}$ 
admitting repeated eigenvalues.\\

Coming to the topological properties of these sets, 
we recall that a sequence of probability measures 
$\mu_{n} \in \mathcal{P}(\mathsf{P}_{c})$ is said to weakly 
converge to $\mu \in \mathcal{P}(\mathsf{P}_{c})$ if 
by definition 
$$
\int_{\mathsf{P}_{c}} f(P)\,\mu_{n}(dP) \longrightarrow 
\int_{\mathsf{P}_{c}} f(P)\,\mu(dP), \qquad \forall \ f \in 
C_{b}(\mathsf{P}_{c}).
$$
It is well-known that, for Polish spaces, 
the L\'evy-Prokhorov metric gives a 
metrization of weak convergence; in particular 
it makes $\mathcal{P}(\mathsf{P}_{c})$ 
complete and separable. 
It will be therefore enough to describe topologically 
the subsets of $\mathcal{P}(\mathsf{P}_{c})$ only using 
weakly converging sequences.

Moreover we recall the following classical fact about compact subsets of
probability measures:
if $(\mathcal{X},\mathsf{d})$ is a metric space 
(considered with its Borel $\sigma$--algebra), a set 
$\mathcal{S}\subset \mathcal{P}(\mathcal{X})$ 
of probability measures is {\em{tight}}
whether for every 
$\ve>0$ there is a compact $K_{\ve}\subset \mathcal{X}$ 
such that 
$\mu(K_{\ve}) \geq 1-\ve$ for every 
$\mu\in \mathcal{S}$. 
The Prohorov Theorem states that every tight family is relatively compact. If $\mathcal{X}$ is Polish the converse is true: 
every relatively compact family is tight.

\begin{lemma}\label{L:continuityPsi}
The map $\Psi : \mathcal{D}_{1}(\mathsf{P}_{c}) \longrightarrow 
\mathcal{S}_{n}(\mathsf{B(H)})$ is continuous in the following sense: if $\tr(\cdot) \mu_{n} \weak \tr(\cdot) \mu$, then 
$\Psi(\mu_{n}) \weak \Psi(\mu)$.
\end{lemma}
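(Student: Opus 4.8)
The plan is to reduce the statement to the weak$^*$ convergence of the states associated with $\Psi(\mu_n)$ and then to recognise each evaluation $\tr(\Psi(\mu_n)B)$ as the integral of one fixed bounded continuous function against the probability measure $\tr(\cdot)\mu_n$. By Theorem \ref{T:convergenceweakL1} the three topologies on $\mathcal{S}(\mathbb{K})$ coincide, so it is enough to prove that $\tr(\Psi(\mu_n)B)\to\tr(\Psi(\mu)B)$ for every compact operator $B\in\mathbb{K}$.

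For fixed $B$, I would introduce the function
$$
f_B:\mathsf{P}_c\longrightarrow\C,\qquad f_B(P):=\frac{\tr(PB)}{\tr(P)}\ \ \text{if }\tr(P)\geq 1,\qquad f_B(0):=0,
$$
the value on the isolated (clopen) component $\mathsf{P}_0=\{0\}$ being assigned by hand, which does not affect any integral since $\tr(\cdot)\mu_n$ charges only projections of positive rank. Writing $\mu_n=\sum_i\lambda_i^{(n)}\delta_{P_i^{(n)}}$, so that $\tr(\cdot)\mu_n=\sum_i\lambda_i^{(n)}\tr(P_i^{(n)})\,\delta_{P_i^{(n)}}$, a direct computation gives
$$
\int_{\mathsf{P}_c}f_B\,d\big(\tr(\cdot)\mu_n\big)=\sum_i\lambda_i^{(n)}\tr\big(P_i^{(n)}B\big)=\tr\big(\Psi(\mu_n)B\big),
$$
the last equality being legitimate because $\Psi(\mu_n)=\sum_i\lambda_i^{(n)}P_i^{(n)}$ converges in trace norm (Definition \ref{D:mapPsi}) and the series is absolutely summable with $\sum_i\lambda_i^{(n)}|\tr(P_i^{(n)}B)|\leq\|B\|$. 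Thus everything reduces to showing $f_B\in C_b(\mathsf{P}_c)$ and invoking the definition of weak convergence $\tr(\cdot)\mu_n\weak\tr(\cdot)\mu$.

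Boundedness is immediate: since $P=P^2$, cyclicity of the trace gives $\tr(PB)=\tr(PBP)$, whence $|\tr(PB)|\leq\|PBP\|_1\leq\tr(P)\,\|B\|$ and $|f_B(P)|\leq\|B\|$. For continuity I would use that the connected components $\mathsf{P}_r$ are clopen in $\mathsf{P}_c$ and that $\tr(\cdot)$ is constant on each of them, so it suffices to check continuity on a fixed $\mathsf{P}_r$. If $P_n\to P$ in $\mathsf{P}_r$ in the operator norm, then $P_n-P$ has rank at most $2r$, and combining the comparison $\|\cdot\|_2\leq\sqrt{2r}\,\|\cdot\|$ recalled in Section \ref{Ss:project} with $\|\cdot\|_1\leq\sqrt{2r}\,\|\cdot\|_2$ one obtains $\|P_n-P\|_1\leq 2r\,\|P_n-P\|\to 0$; hence $|\tr(P_nB)-\tr(PB)|\leq\|P_n-P\|_1\|B\|\to 0$. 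This is the only genuinely technical point, and I expect the main obstacle to be exactly the control of the trace norm of $P_n-P$ by the operator norm, which is available only because the rank is locally constant on $\mathsf{P}_c$.

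Finally, with $f_B\in C_b(\mathsf{P}_c)$ established, the hypothesis $\tr(\cdot)\mu_n\weak\tr(\cdot)\mu$ yields $\int f_B\,d(\tr(\cdot)\mu_n)\to\int f_B\,d(\tr(\cdot)\mu)$, that is $\tr(\Psi(\mu_n)B)\to\tr(\Psi(\mu)B)$ for every $B\in\mathbb{K}$. By Theorem \ref{T:convergenceweakL1} this is precisely $\Psi(\mu_n)\weak\Psi(\mu)$, which completes the proof.
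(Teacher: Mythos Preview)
Your proof is correct and follows essentially the same route as the paper: define $f_B(P)=\tr(PB)/\tr(P)$, check it lies in $C_b(\mathsf{P}_c)$, recognise $\tr(\Psi(\mu_n)B)$ as $\int f_B\,d(\tr(\cdot)\mu_n)$, and invoke Theorem~\ref{T:convergenceweakL1}. The only cosmetic difference is that the paper first takes $B\in\mathcal{L}^1$ and then passes to general $B\in\mathbb{K}$ by density, whereas you work directly with compact $B$; your version is slightly more streamlined and you supply the rank-based estimate for the continuity of $f_B$ that the paper leaves as ``easily seen''.
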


\begin{proof}
For each $B \in \mathcal{L}^{1}$ we consider the function
$$
f_{B} : \mathsf{P}_{c} \longrightarrow \R, \quad f_{B}(P) = \tr(BP)/\tr(P)
$$
and zero on $0\in \mathsf{P}_c$.
The function is easily seen to be continuous on the same connected component of $\mathsf{P}_{c}$ 
and it is bounded by
$|f_{B}(P)| = \| B\|$.
Then the following identities
$$
\int_{\mathsf{P}_{c}} f_{B}(P)\tr(P) \,\mu_{n}(dP) = 
\int_{\mathsf{P}_{c}} \tr(BP) \,\mu_{n}(dP) = 
 \Psi(\mu_{n}) (B),
$$
imply that $\Psi(\mu_{n}) (B) \to \Psi(\mu)(B)$, for all 
$B \in \mathcal{L}^{1}$. 
By density in the norm sense, this is enough to conclude that 
$\Psi(\mu_{n}) (B) \to \Psi(\mu)(B)$, for all 
$B \in \mathbb{K}$ and the conclusion comes from Theorem 
\ref{T:convergenceweakL1}.
\end{proof}

Then we analyse topological properties of subsets of discrete measures.
In particular, the next result will be crucial in the study of the optimal transport problem between normal states.

\begin{proposition}\label{P:compactness}
The set 
$$
\tr(\cdot)\mathcal{D}_{1}^{\perp}(\mathsf{P}_{c}): = 
\Big{\{}\tr(\cdot) \mu \colon \mu \in 
\mathcal{D}_{1}^{\perp}(\mathsf{P}_{c})  \Big{\}}
$$ 
is closed.
Moreover for any $\varphi \in \mathcal{S}_{n}(\mathsf{B(H)})$,  
$\tr(\cdot)\Lambda_{\varphi}^{\perp}$ is compact.
\end{proposition}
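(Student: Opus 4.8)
The plan is to first record a clean intrinsic description of $\tr(\cdot)\mathcal{D}_{1}^{\perp}(\mathsf{P}_{c})$ and then read both assertions off it. Since $\tr(P)\in\N$ is the rank of $P$, the assignment $\mu\mapsto\tr(\cdot)\mu$ is a bijection of $\mathcal{D}_{1}^{\perp}(\mathsf{P}_{c})$ onto
$$\big\{\nu\in\mathcal{P}(\mathsf{P}_{c})\colon \text{any two distinct points of }\supp(\nu)\text{ are orthogonal projections}\big\},$$
with inverse $\nu=\sum_{P}\nu(\{P\})\delta_{P}\mapsto\sum_{P}\tfrac{\nu(\{P\})}{\tr(P)}\delta_{P}$. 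The fact making this work is a separation estimate: two distinct orthogonal projections $P\perp Q$ in $\mathsf{P}_{c}$ satisfy $\|P-Q\|_{2}^{2}=\tr(P+Q)\geq2$, and since the Riemannian metric of $\mathsf{P}_{c}$ is the restriction of the Hilbert--Schmidt one we get $\sfd(P,Q)\geq\|P-Q\|_{2}\geq\sqrt2$ (with $\sfd=+\infty$ across components). Hence a pairwise orthogonal support is $\sqrt2$-separated, therefore discrete, closed and — in the separable space $\mathsf{P}_{c}$ — countable; such a $\nu$ is then automatically purely atomic with $\nu(\supp\nu)=1$, which is exactly what the description demands.

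To prove closedness I would take $\nu_{n}=\tr(\cdot)\mu_{n}\weak\nu$ with $\mu_{n}\in\mathcal{D}_{1}^{\perp}(\mathsf{P}_{c})$; by the description it suffices to check that any two distinct $P,Q\in\supp(\nu)$ are orthogonal. For an open ball $U$ around a support point the portmanteau inequality $\liminf_{n}\nu_{n}(U)\geq\nu(U)>0$ forces $\nu_{n}$ to carry an atom inside $U$ for large $n$; a diagonal choice then yields atoms $P_{n},Q_{n}\in\supp(\nu_{n})$ with $P_{n}\to P$ and $Q_{n}\to Q$ in $\sfd$, hence in $\|\cdot\|_{2}$. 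As $P\neq Q$ we have $P_{n}\neq Q_{n}$ eventually, so $\tr(P_{n}Q_{n})=0$ by the orthogonality built into $\mu_{n}$, and continuity of the Hilbert--Schmidt pairing gives $\tr(PQ)=\lim_{n}\tr(P_{n}Q_{n})=0$. Thus $\supp(\nu)$ is pairwise orthogonal and $\nu\in\tr(\cdot)\mathcal{D}_{1}^{\perp}(\mathsf{P}_{c})$.

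For compactness of $\tr(\cdot)\Lambda_{\varphi}^{\perp}$ I would combine tightness with closedness. Writing $\rho_{\varphi}=\sum_{i}\lambda_{i}P_{V_{i}}$ (distinct $\lambda_{i}>0$, finite-dimensional $V_{i}$), any $\mu=\sum_{j}\alpha_{j}\delta_{Q_{j}}\in\Lambda_{\varphi}^{\perp}$ satisfies $\rho_{\varphi}Q_{j}=\alpha_{j}Q_{j}$ because the $Q_{j}$ are pairwise orthogonal; hence each nonzero atom has $R(Q_{j})\subseteq V_{i}$ with $\alpha_{j}=\lambda_{i}$, and grouping by $i$ gives $\sum_{j:\,R(Q_{j})\subseteq V_{i}}Q_{j}=P_{V_{i}}$. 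Consequently the $\nu=\tr(\cdot)\mu$ mass on the compact set $\{Q\in\mathsf{P}_{c}\colon R(Q)\subseteq V_{i}\}$ equals $\lambda_{i}\dim V_{i}$, independently of $\mu$. Given $\ve>0$, choosing $N$ with $\sum_{i>N}\lambda_{i}\dim V_{i}<\ve$ and $K_{\ve}=\bigcup_{i\leq N}\{Q\colon R(Q)\subseteq V_{i}\}$ (a finite union of finite-dimensional Grassmannians, hence compact) yields $\nu(K_{\ve})=1-\sum_{i>N}\lambda_{i}\dim V_{i}>1-\ve$ for all $\nu\in\tr(\cdot)\Lambda_{\varphi}^{\perp}$. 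This is tightness, so the family is relatively compact by Prohorov. Closedness then follows from the closedness already established together with Lemma \ref{L:continuityPsi}: if $\nu_{n}=\tr(\cdot)\mu_{n}\weak\nu$ with $\mu_{n}\in\Lambda_{\varphi}^{\perp}$, then $\nu=\tr(\cdot)\mu$ for a unique $\mu\in\mathcal{D}_{1}^{\perp}(\mathsf{P}_{c})$, while $\Psi(\mu_{n})=\rho_{\varphi}$ for every $n$ forces $\Psi(\mu)=\rho_{\varphi}$ by continuity of $\Psi$, i.e. $\mu\in\Lambda_{\varphi}^{\perp}$. Being relatively compact and closed, $\tr(\cdot)\Lambda_{\varphi}^{\perp}$ is compact.

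I expect the preservation of orthogonality under weak limits to be the only genuinely delicate point: it is precisely where the uniform $\sqrt2$-separation of orthogonal projections and the continuity of $\tr(PQ)=\langle P,Q\rangle_{2}$ are indispensable, the former to rule out atoms merging and to guarantee atomicity of the limit, the latter to transport the orthogonality relation to the limit. The tightness estimate, by contrast, is rigid rather than delicate, since the constraint $\Psi(\mu)=\rho_{\varphi}$ pins down the mass carried in each finite-dimensional eigenspace.
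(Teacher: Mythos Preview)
Your proof is correct and follows essentially the same route as the paper's: closedness via the uniform separation of pairwise orthogonal projections (you use $\|P-Q\|_{2}\geq\sqrt{2}$ where the paper uses $\|P-Q\|=1$) to show that weak limits inherit pairwise orthogonal support, and tightness via the finite-dimensional Grassmannians inside the eigenspaces $V_{i}$ of $\rho_{\varphi}$ together with Prohorov and Lemma~\ref{L:continuityPsi}. Your description of the compact set as $\{Q:R(Q)\subseteq V_{i}\}$ and your exact computation of the mass $\lambda_{i}\dim V_{i}$ carried there are slightly cleaner than the paper's formulation via ``decompositions'' $F_{V_{i}}$, but the content is identical.
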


\begin{proof}
{\bf Step 1.}
Consider a sequence $\mu_{n} \in \mathcal{D}_{1}^{\perp}(\mathsf{P}_{c})$ and $\eta \in \mathcal{P}(\mathsf{P}_{c})$
such that $\tr(\cdot)\mu_{n} \weak \eta$.   
Then for any $P \in \supp(\eta)$ there exists a sub-sequence $n_{k}$ and $P_{k} \in \supp(\mu_{n_{k}})$ 
such that $P_{k} \to P$. Any two distinct 
projections $P,Q \in \supp(\mu_{n})$ verify $\|P - Q \| = 1$, 
then necessarily $\eta$ is a discrete measure, i.e. 
$\eta = \sum_{i} \lambda_{i} \delta_{P_{i}}$. 
For the same reason, $\tr(P_{i}P_{j}) = 0$ whenever $i \neq j$.
Since by assumption $\eta(\mathsf{P}_{c}) = 1$, to have the claim 
is enough to define $\mu: = \eta/\tr(\cdot)$ to have that 
$\eta \in \tr(\cdot)\mathcal{D}_{1}^{\perp}(\mathsf{P}_{c})$.

\smallskip
{\bf Step 2.}
We fix the following notation: 
$\Phi(\varphi) = \sum_{i} \lambda_{i} P_{V_{i}}$, where 
$P_{V_{i}}$ denotes the  
projection onto $V_{i}$.
Given any $\ve>0$ there exists $m_{\ve} \in \N$
such that 
$$
\sum_{i \geq m_{\ve} } \lambda_{i} \tr(P_{V_{i}}) \leq \ve, \quad 
\text{with }\quad  \lambda_{i} > \lambda_{i+1}.
$$
Let $N:=\max_{i\leq m_{\ve}}\operatorname{dim}V_i.$
For every $i\leq m_{\ve}$ we say that a decomposition of $P_{V_i}$ is a $N$-tuple $(Q_1,...,\,Q_N) \in \mathsf{P}_c^N$ 
such that the $Q_j$ that are different from zero are mutually orthogonal and satisfy 
 $\sum_{j=1}^r P_{Q_j}=P_{V_i}$.
 If we call $\mathcal{Q}_i$ the set of all such decompositions 
we have $N$ projections 
$q_j^{(i)}:\mathcal{Q}_i \longrightarrow \mathsf{P}_c$. Define
$$F_{V_i}:=\bigcup_{j=1}^Nq_j^{(i)}\big{(}\mathcal{Q}_i\big{)}$$ the set of all the projections appearing in at least one decomposition of $V_i$.
Let $\mathbb{G}({V_i},d)$ be the Grassmann manifold of all the subspaces of dimension $d$ inside $V_i$; we can embed
$\mathcal{Q}_i$
  into the union of all products $\mathbb{G}\big{(}V_i, \operatorname{tr}(Q_1)\big{)} \times \cdots \times \mathbb{G}\big{(}V_i, \operatorname{tr}(Q_N)\big{)}$ with the union running over the finite set of all the possible ways of writing $N=s_1 + \cdots +s_N$ with $s_j \in \mathbb{N}$ (including zero). We adopt the convention that $\mathbb{G}\big{(}V_i,0\big{)}=\bullet$, the space with a point.
Now since the Grassmannians are compact we see that $F_{V_i}$ and also $\bigcup_{i=1}^{m_{\ve}}F_{V_i}$ are relatively compact inside $\mathsf{P}_c$.

Now pick any $\mu \in \Lambda_{\varphi}^{\perp}$. We write it in the form $\mu = \sum_{i} \lambda_{i} \delta_{P_{Z_{i}}}$ 
where the eigenvalues are the same as the eigenvalues of $\Phi(\varphi)$ but here now they may be repeated.
It holds true, 
$$
\tr(\cdot )\mu \bigg{(}\bigcup_{i=1} ^{m_{\ve}} F_{V_{i}}\bigg{)} = 
\sum_{i\leq m_{\ve}} \lambda_{i} \tr(P_{Z_{i}}) 
= \sum_{i\leq m_{\ve}} \lambda_{i} \tr(P_{V_{i}}) \geq 1-\ve,
$$
where the second identity is valid collecting different projections with the same eigenvalue. 
This proves that $\tr(\cdot)\Lambda_{\varphi}^{\perp}$ is tight.
To prove compactness is enough to recall that tightness 
is equivalent to precompactness in $\mathcal{P}(\mathsf{P}_{c})$.
Moreover by Lemma \ref{L:continuityPsi} and the previous 
part of the proof $\tr(\cdot)\Lambda_{\varphi}^{\perp}$ is closed; 
hence the claim follows.
\end{proof}

%------------------------------------------------------------------------------------
\subsection{Weak* topology and convergence of projections} 

We now 
relate weak$^{*}$ convergence of normal states and
 spectral decomposition of the associated density matrices.

\begin{lemma}\label{L:convergenceprojections}
Let $(P_{n})_n$ be a sequence of   projections inside $\mathcal{L}^{1}$; 
assume moreover $P_{n} \weak P$ in the $w$-topology to some 
$P \in \mathcal{L}^{1}$, i.e. in duality with 
$\mathsf{B}(\mathsf{H})$.
Then $P$ is a projection: $P^{2} = P$ and $P^{*} = P$.
\end{lemma}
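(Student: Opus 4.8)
The plan is to establish three facts about the limit $P$: that it is self-adjoint, that $0\le P\le 1$ (so that $P$ is a positive trace-class operator), and finally the idempotency $P^2=P$, which is the substantial point. Throughout I read the hypothesis $P_n\weak P$ as $\tr(P_nB)\to\tr(PB)$ for every $B\in\mathsf{B}(\mathsf{H})$; testing against rank-one and finite-rank $B$ this is in particular stronger than weak operator convergence.

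Self-adjointness and the bounds are the easy preliminaries. Since $P_n=P_n^*$ one has $\tr(P_nB^*)=\overline{\tr(P_nB)}$ for every $B$ (cyclicity of the trace), and letting $n\to\infty$ gives $\tr(PB^*)=\overline{\tr(PB)}$, i.e. $\tr(P^*B)=\tr(PB)$ for all $B$, hence $P^*=P$. Testing against $B=|\xi\rangle\langle\xi|$ gives $\langle\xi,P\xi\rangle=\lim_n\langle\xi,P_n\xi\rangle$, and since $0\le P_n\le 1$ this forces $0\le\langle\xi,P\xi\rangle\le\|\xi\|^2$ for every $\xi$; thus $0\le P\le 1$. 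Finally, testing against $B=\operatorname{Id}$ yields $\tr(P_n)\to\tr(P)$; as $\tr(P_n)=\dim R(P_n)\in\N$, the trace of $P$ is a non-negative integer $r$ and the ranks of $P_n$ are eventually equal to $r$. The upshot is that we have positive operators, weak operator convergence, and \emph{convergence of the traces}.

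The heart of the matter is to upgrade this to convergence in trace norm, a noncommutative Scheff\'e (Gr\"umm) phenomenon in which positivity and the trace convergence just obtained are the crucial inputs. Concretely, I would fix $\ve>0$ and a finite-rank spectral projection $Q$ of $P$ with $\tr((1-Q)P(1-Q))<\ve$, and then split $\|P_n-P\|_1\le \|P_n-QP_nQ\|_1+\|QP_nQ-QPQ\|_1+\|QPQ-P\|_1$. The middle term tends to $0$ because $QP_nQ\to QPQ$ entrywise in the fixed finite-dimensional corner $Q\mathsf{H}$. For the two outer terms the off-diagonal corners are handled by Cauchy--Schwarz, $\|QA(1-Q)\|_1\le \tr(QAQ)^{1/2}\,\tr((1-Q)A(1-Q))^{1/2}$, while the diagonal tail $\tr((1-Q)P_n(1-Q))=\tr(P_n)-\tr(QP_nQ)$ converges to $\tr(P)-\tr(QPQ)=\tr((1-Q)P(1-Q))<\ve$, precisely because $\tr(P_n)\to\tr(P)$. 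Letting $n\to\infty$ and then $\ve\to0$ gives $\|P_n-P\|_1\to 0$. I expect this to be the main obstacle, as it is exactly where escape of mass to infinity must be excluded, and the only place the full strength of weak-* convergence (the test $B=\operatorname{Id}$) is needed; by contrast, the naive Hilbert--Schmidt weak convergence of $P_n$ to $P$ yields only the automatic, wrong-direction bound $\tr(P^2)\le\tr(P)$ and is useless here.

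Once trace-norm convergence is in hand the conclusion is routine. Squaring is continuous in trace norm on the operator-norm unit ball, since $\|P_n^2-P^2\|_1\le \|P_n\|\,\|P_n-P\|_1+\|P_n-P\|_1\,\|P\|\le 2\,\|P_n-P\|_1\to0$. As $P_n^2=P_n\to P$ in trace norm, uniqueness of the limit forces $P^2=P$, which together with $P^*=P$ completes the proof.
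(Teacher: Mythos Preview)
Your proof is correct and follows the same overall strategy as the paper: show $P^*=P$, upgrade the weak convergence to trace-norm convergence using $\tr(P_n)\to\tr(P)$, and then deduce $P^2=P$. The one point of difference is the trace-norm upgrade. The paper simply invokes Robinson's theorem (their Theorem~\ref{T:convergenceweakL1}), which says that on states the $w^*$-topology and the $\mathcal{L}^1$-topology coincide; since $\tr(P_n)\to\tr(P)\neq 0$, a normalization reduces to that case. You instead give a self-contained Gr\"umm/Scheff\'e argument via the spectral truncation $Q$ and the Cauchy--Schwarz bound on the off-diagonal corners. Your argument is longer but has the virtue of being independent of the cited black box and of making transparent exactly where positivity and the trace convergence enter. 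For the final step the paper computes $\tr(P_nKP_n)$ two ways for $K\in\mathbb{K}$, whereas your direct estimate $\|P_n^2-P^2\|_1\le 2\|P_n-P\|_1$ is a bit cleaner; both are routine once $\mathcal{L}^1$-convergence is in hand.
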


\begin{proof}
To check $P = P^{*}$ it is sufficient to notice that 
$\langle P x, x\rangle \in \R$, being the limit 
of $\langle P_{n} x, x\rangle \in \R$.

To prove $P^{2} = P$, first we can assume that $P \neq 0$ otherwise the claim is trivial.
From $P_{n} \weak P$ in the weak topology 
we deduce that 
$$
\| P_{n}\|_{\mathcal{L}^{1}}= \tr(P_{n}) \longrightarrow \tr(P) = \| P\|_{\mathcal{L}^{1}}, 
$$
implying (see Theorem \ref{T:convergenceweakL1}) that $P_{n} \to P$
in $\mathcal{L}^{1}$. Then for any $K \in \mathbb{K}$ %
$$
\tr(P_{n} K P_{n}) = \tr(P^{2}_{n} K )  = \tr(P_{n}K ) \longrightarrow \tr(P K);
$$
on the other hand, since $K P_{n} \to KP$ in $\mathcal{L}^{1}$, it follows that 
$\tr(P_{n} K P_{n}) \to \tr(PKP)$. Therefore $P^{2} = P$.
\end{proof}

\begin{lemma}\label{L:Pconverge}
Let $\varphi_{n},\varphi \in \mathcal{S}_{n}(\mathsf{B(H)})$
such that $\varphi_{n} \weak \varphi$.
Consider the corresponding density matrices $\rho_{n} = \rho_{\varphi_{n}}$, 
 $\rho = \rho_{\varphi}$ 
for which we consider any spectral decompositions
(in the sense of \eqref{E:allmeasures})
$$
\rho_{n} = \sum_{i} \lambda_{i}^{n} P_{i}^{n}, \qquad 
\rho = \sum_{i} \lambda_{i} P_{i},
$$
in particular repetitions of eigenvalues are allowed.
Let $\big{(} \lambda_{i}^{n_{k}}\big{)}_{k\in \N}$ be a 
subsequence converging to $\widetilde{ \lambda}_i \neq 0$ and  
$\widetilde{P}_i$ be any $w^{*}$-limit of the corresponding 
subsequence of projections $\big{(} P_{i}^{n_{k}} \big{)}_{k\in \N}$.

Then 
$P_{i}^{n_{k}} \to \widetilde{P_{i}}$ in $\mathcal{L}^{1}$  
and therefore $\widetilde{P}_i$ is a projection.
Moreover there exists $j \in \N$ such that 
\begin{equation}\label{E:uniqueP}
\widetilde{P}_{i} \leq P_{j}, \qquad \widetilde{\lambda_{i}} = \lambda_{j}.
\end{equation}
\end{lemma}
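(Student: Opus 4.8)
The plan is to reduce everything to operator-norm and trace-norm convergence of the density matrices and then localize around the eigenvalue $\widetilde\lambda_i$ by functional calculus. Since by hypothesis $\varphi_n \weak \varphi$, Theorem \ref{T:convergenceweakL1} gives $\rho_n \to \rho$ in $\mathcal{L}^1$, hence also $\rho_{n_k}\to\rho$ in operator norm. I would record at once that the traces along the subsequence are bounded: from $\lambda_i^{n_k}\,\tr(P_i^{n_k}) \le \tr(\rho_{n_k}) = 1$ and $\lambda_i^{n_k}\to\widetilde\lambda_i\neq 0$, one gets $\tr(P_i^{n_k}) \le 1/\lambda_i^{n_k}\le C$ for $k$ large.

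Next I would identify $\widetilde\lambda_i$. Because $\rho_{n_k}\to\rho$ in operator norm, if $\dist(\widetilde\lambda_i,\sigma(\rho))>0$ then for large $k$ the spectrum of $\rho_{n_k}$ would avoid a fixed neighborhood of $\widetilde\lambda_i$, contradicting $\lambda_i^{n_k}\in\sigma(\rho_{n_k})$, $\lambda_i^{n_k}\to\widetilde\lambda_i$. Hence $\widetilde\lambda_i$ is a nonzero, isolated, finite-multiplicity eigenvalue of $\rho$, say $\widetilde\lambda_i=\lambda_j$, with eigenprojection $P_j:=P_{V_j}$. I then fix a continuous $f$ with $0\le f\le 1$, equal to $1$ on a neighborhood of $\lambda_j$ and supported in a slightly larger interval that excludes $0$ and every other eigenvalue of $\rho$, so that $f(\rho)=P_j$. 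Writing $f(x)=x\,g(x)$ with $g(x)=f(x)/x$ bounded and continuous (its support avoids $0$), one has $f(\rho_{n_k})=g(\rho_{n_k})\rho_{n_k}$; since $g(\rho_{n_k})\to g(\rho)$ in operator norm and $\rho_{n_k}\to\rho$ in $\mathcal{L}^1$, the estimate $\|AB\|_1\le\|A\|\,\|B\|_1$ yields $f(\rho_{n_k})\to P_j$ in $\mathcal{L}^1$.

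The key algebraic relation is that, the pieces $P_l^{n_k}$ being pairwise orthogonal, $f(\rho_{n_k})=\sum_l f(\lambda_l^{n_k})P_l^{n_k}$, and since $f(\lambda_i^{n_k})=1$ for $k$ large, $f(\rho_{n_k})P_i^{n_k}=P_i^{n_k}$. Testing against an arbitrary $K\in\mathbb{K}$ in $\tr(P_i^{n_k}K)=\tr\bigl(P_i^{n_k}K f(\rho_{n_k})\bigr)$, and using $Kf(\rho_{n_k})\to KP_j$ in operator norm, the weak$^*$ convergence $P_i^{n_k}\weak\widetilde P_i$, and the uniform bound on $\tr(P_i^{n_k})$, I would pass to the limit to obtain $\tr(\widetilde P_i K)=\tr(P_j\widetilde P_i K)$ for all $K$, i.e. $\widetilde P_i=P_j\widetilde P_i$. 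In particular $R(\widetilde P_i)\subseteq V_j$, which gives $\widetilde P_i\le P_j$ once $\widetilde P_i$ is known to be a projection, and it fixes $\widetilde\lambda_i=\lambda_j$ as in \eqref{E:uniqueP}.

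The main obstacle is the $\mathcal{L}^1$-convergence and the projection property, namely ruling out loss of mass in the weak$^*$ limit; this is exactly where the trapping below $P_j$ pays off. Writing $f(\rho_{n_k})=P_j+\delta_k$ with $\|\delta_k\|_1\to 0$, and using $P_i^{n_k}=f(\rho_{n_k})P_i^{n_k}$, I compute $\tr(P_i^{n_k})=\tr(P_j P_i^{n_k})+\tr(\delta_k P_i^{n_k})$; the second term is bounded by $\|\delta_k\|_1\to 0$, while $\tr(P_j P_i^{n_k})\to\tr(P_j\widetilde P_i)=\tr(\widetilde P_i)$ because $P_j\in\mathbb{K}$ (so weak$^*$ convergence applies) and $P_j\widetilde P_i=\widetilde P_i$. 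Hence $\tr(P_i^{n_k})\to\tr(\widetilde P_i)$. Having weak$^*$ convergence together with convergence of the $\mathcal{L}^1$-norms, I conclude exactly as in Lemma \ref{L:convergenceprojections} (via Theorem \ref{T:convergenceweakL1}) that $P_i^{n_k}\to\widetilde P_i$ in $\mathcal{L}^1$ and that $\widetilde P_i$ is self-adjoint with $\widetilde P_i^{\,2}=\widetilde P_i$, i.e. a projection. Combined with $R(\widetilde P_i)\subseteq V_j$ this yields $\widetilde P_i\le P_j$ and completes \eqref{E:uniqueP}.
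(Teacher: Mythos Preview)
Your argument is correct, but it follows a genuinely different route from the paper. The paper avoids functional calculus entirely: from the orthogonality of the decomposition one has the single identity $\rho_{n_k}P_i^{n_k}=\lambda_i^{n_k}P_i^{n_k}$, and the paper passes to the limit in $\tr(\rho_{n_k}P_i^{n_k}B)$ for arbitrary $B\in\mathsf{B(H)}$, using that $\rho_{n_k}\to\rho$ in $\mathcal{L}^1$ and that $B\rho\in\mathbb{K}$. This yields directly $\rho\widetilde P_i=\widetilde\lambda_i\widetilde P_i$, which simultaneously identifies the eigenvalue, upgrades the $w^*$-convergence of $P_i^{n_k}$ to weak convergence (the limit being $\rho\widetilde P_i/\widetilde\lambda_i$), and feeds straight into Lemma~\ref{L:convergenceprojections}. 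Your approach instead localizes spectrally: you first pin down $\widetilde\lambda_i\in\sigma(\rho)$ via norm-perturbation of spectra, build a bump $f$ with $f(\rho)=P_j$, prove $f(\rho_{n_k})\to P_j$ in $\mathcal{L}^1$, and then exploit $f(\rho_{n_k})P_i^{n_k}=P_i^{n_k}$ to trap $\widetilde P_i$ below $P_j$ and force trace convergence. This is heavier machinery but has the advantage of being a reusable template for spectral-perturbation arguments (Riesz projections, gap conditions, etc.); the paper's identity $\rho_{n_k}P_i^{n_k}=\lambda_i^{n_k}P_i^{n_k}$ is slicker here precisely because the $P_i^{n_k}$ already come from a spectral decomposition, so no localization is needed.
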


\begin{proof}
We start noticing the following: for any 
$B \in \mathsf{B}(\mathsf{H})$ it holds true
$
\tr\big{(}\rho_{n_{k}}P_{i}^{n_{k}}B\big{)} \longrightarrow 
\tr\big{(}\rho\widetilde{P}_{i}B\big{)}. 
$
Indeed 
$$
\tr\big{(}\rho_{n_{k}}P_{i}^{n_{k}}B\big{)} - 
\tr\big{(}\rho\widetilde{P}_{i}B\big{)} 
= \tr\big{(}\rho_{n_{k}}P_{i}^{n_{k}}B\big{)} - 
\tr\big{(}\rho P_{i}^{n_{k}}B\big{)} 
+\tr\big{(}\rho P_{i}^{n_{k}}B\big{)} - 
\tr\big{(}\rho\widetilde{P}_{i}B\big{)}; 
$$
then the first term goes to zero from $\rho_{n_{k}} \longrightarrow \rho$
in $\mathcal{L}^{1}$ while the second one converges to zero from the $w^{*}$-convergence of $P_{i}^{n_{k}}$ to 
$\widetilde{P}_{i}$ and by the compactness of $\rho$.
Moreover by the orthogonality of projections it follows that 
$$
\rho_{n_{k}}P_{i}^{n_{k}} = 
\lambda_{i}^{n_{k}}P_{i}^{n_{k}};
$$
hence by $\lambda_{i}^{n_{k}} \to 
\widetilde{\lambda}_{i} \neq 0$,   
$P_{i}^{n_{k}}$ is $w$-converging to 
$\rho \widetilde{P}_{i}/\widetilde {\lambda}_{i}$. 

Then, since $w$ and $w^{*}$ limits coincide, we deduce that 
$\widetilde{P}_{i} = \rho \widetilde{P}_{i}/\widetilde{\lambda}_{i}$ and that 
$P_{i}^{n_{k}} \to \widetilde{P}_{i}$ weakly.
Therefore following the proof of Lemma \ref{L:convergenceprojections},
$P_{i}^{n_{k}} \to \widetilde{P}_{i}$ in $\mathcal{L}^{1}$ 
and $\widetilde{P}_{i}$ is a  projection, proving the 
first part of the claim.

To obtain the second part we observe that the previous identity 
$\rho \widetilde{P}_{i} = \widetilde {\lambda}_{i} \widetilde{P}_{i}$ implies the claim 
together with  the uniqueness of the spectral decomposition of the compact and self-adjoint operator $\rho$.
\end{proof}

\begin{proposition}\label{P:weakweak}
Let $\varphi_{n},\varphi \in \mathcal{S}_{n}(\mathsf{B(H)})$
such that $\varphi_{n} \weak \varphi$. Then for any 
sequence $\mu_{n} \in \Lambda_{\varphi_{n}}^{\perp}$ 
there exist a subsequence $\mu_{n_{k}}$ and  
$\mu \in \Lambda_{\varphi}^{\perp}$ such that 
$\tr(\cdot) \mu_{n_{k}} \weak \tr(\cdot) \mu$, i.e.
in duality 
with continuous and bounded functions $C_{b}(\mathsf{P}_{c})$.
\end{proposition}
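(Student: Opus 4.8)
The plan is to reduce everything to a tightness statement for the family $\{\tr(\cdot)\mu_n\}_n \subset \mathcal{P}(\mathsf{P}_c)$: once this is established, Prokhorov's theorem produces the convergent subsequence and the two remaining assertions follow from results already at hand. Throughout write $\rho_n := \rho_{\varphi_n}$ and $\rho := \rho_\varphi$; by Theorem \ref{T:convergenceweakL1} the hypothesis $\varphi_n \weak \varphi$ is equivalent to $\rho_n \to \rho$ in $\mathcal{L}^1$, hence also in operator norm. Each $\mu_n = \sum_i \lambda_i^n \delta_{P_i^n} \in \Lambda_{\varphi_n}^\perp$ satisfies $\sum_i \lambda_i^n P_i^n = \rho_n$ with pairwise orthogonal $P_i^n$, so that (applying $\rho_n$ to a vector in $R(P_j^n)$) each $\lambda_i^n$ is an eigenvalue of $\rho_n$ and $P_i^n$ is a subprojection of the corresponding eigenspace; moreover $\sum_i \lambda_i^n \tr(P_i^n)=1$.

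Tightness is the main obstacle, and the idea is to capture almost all the mass on the ``large eigenvalue'' part, which is stable under the convergence $\rho_n \to \rho$. Fix $\ve>0$. Since $\tr(\rho\,\ind_{(\delta,\infty)}(\rho)) \uparrow \tr(\rho)=1$ as $\delta \downarrow 0$, I would choose $\delta>0$ with $\delta \notin \sigma(\rho)$ such that the finite-rank spectral projection $E:=\ind_{(\delta,\infty)}(\rho)$, of rank $N$ say, obeys $\tr(\rho E)\geq 1-\ve/2$. As $\delta$ lies in a spectral gap of $\rho$ and $\rho_n \to \rho$ in norm, standard perturbation theory shows the spectral projections $E_n:=\ind_{(\delta,\infty)}(\rho_n)$ are defined for large $n$, converge to $E$ in operator norm, and eventually have rank $N$. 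Writing $\tr(\rho_n E_n)-\tr(\rho E)=\tr((\rho_n-\rho)E_n)+\tr(\rho(E_n-E))$ and using $\|\rho_n-\rho\|_1\to 0$ and $\|E_n-E\|\to 0$ gives $\tr(\rho_n E_n)\to \tr(\rho E)$, so $\tr(\rho_n E_n)\geq 1-\ve$ for $n$ large. Because $E_n$ projects onto the span of the eigenspaces of $\rho_n$ with eigenvalue $>\delta$, one has $\rho_n E_n=\sum_{\lambda_i^n>\delta}\lambda_i^n P_i^n$, whence
$$\tr(\cdot)\mu_n\big(\{P_i^n:\lambda_i^n>\delta\}\big)=\sum_{\lambda_i^n>\delta}\lambda_i^n\tr(P_i^n)=\tr(\rho_n E_n)\geq 1-\ve.$$

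It remains to see that the projections carrying this mass lie in a fixed compact subset of $\mathsf{P}_c$. Each such $P_i^n$ satisfies $P_i^n\leq E_n$ and $\rank P_i^n\leq N$, so I would prove that the family of all subprojections of the norm-convergent, uniformly finite-rank sequence $(E_n)$ is relatively compact in $\mathsf{P}_c$. Concretely, fixing $n_0$ with $\|E_n-E\|\leq 1/2$ for $n\geq n_0$, set
$$K_\ve:=\overline{\{P\in\mathsf{P}_c:P\leq E\}\cup\bigcup_{n\geq n_0}\{P\in\mathsf{P}_c:P\leq E_n\}}.$$
For $P\leq E_n$ one has $P=E_nPE_n$ and hence $\|P-EPE\|\leq 2\|E_n-E\|$; given a sequence in $K_\ve$ that does not remain in finitely many of the compact Grassmannians $\{P\leq E_m\}$, the indices tend to infinity, $EP^{(j)}E$ is a positive operator of rank $\leq N$ on the fixed $N$-dimensional space $R(E)$ and so has a convergent subsequence with limit $T$, which by $\|P^{(j)}-EP^{(j)}E\|\to 0$ is also the operator-norm limit of $P^{(j)}$; then $T$ is a projection and, the operator and Hilbert--Schmidt topologies coinciding on projections of bounded rank, $P^{(j)}\to T$ in $\mathsf{P}_c$. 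Thus $K_\ve$ is compact with $\tr(\cdot)\mu_n(K_\ve)\geq 1-\ve$ for $n\geq n_0$; enlarging $K_\ve$ to absorb the finitely many measures with $n<n_0$ yields tightness of $\{\tr(\cdot)\mu_n\}_n$.

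Finally I would conclude as follows. By Prokhorov's theorem a subsequence satisfies $\tr(\cdot)\mu_{n_k}\weak\eta$ for some $\eta\in\mathcal{P}(\mathsf{P}_c)$, and since $\tr(\cdot)\mathcal{D}_1^\perp(\mathsf{P}_c)$ is closed (Step 1 of Proposition \ref{P:compactness}) we may write $\eta=\tr(\cdot)\mu$ with $\mu\in\mathcal{D}_1^\perp(\mathsf{P}_c)$. Lemma \ref{L:continuityPsi} then gives $\Psi(\mu_{n_k})\weak\Psi(\mu)$; but $\Psi(\mu_{n_k})=\rho_{n_k}\to\rho_\varphi$ weakly, so uniqueness of the limit forces $\Psi(\mu)=\rho_\varphi$, i.e. $\mu\in\Psi^{-1}(\varphi)\cap\mathcal{D}_1^\perp(\mathsf{P}_c)=\Lambda_\varphi^\perp$, which is precisely the assertion.
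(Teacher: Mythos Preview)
Your proof is correct and takes a genuinely different route from the paper's.

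The paper argues ``by hand'': it invokes Robinson's theorem to get $\ell^1$-convergence of the ordered eigenvalue sequences, then for each index uses Lemma~\ref{L:Pconverge} (via $w^*$-compactness of the unit ball of $\mathcal{L}^1$ and the algebraic identity $\rho_n P_i^n=\lambda_i^n P_i^n$) to extract norm-convergent subsequences of the individual projections, assembles the limit measure explicitly as $\mu=\sum_j\lambda_j\delta_{P_j}$, and finally checks weak convergence of the integrals directly from the $\ell^1$-control on the weights. Your argument is instead a tightness-plus-identification strategy: spectral perturbation of the finite-rank projections $E_n=\ind_{(\delta,\infty)}(\rho_n)$ yields a single compact set $K_\ve\subset\mathsf{P}_c$ capturing most of the mass uniformly in $n$, Prokhorov produces the weak limit, and the already-available closedness of $\tr(\cdot)\mathcal{D}_1^\perp(\mathsf{P}_c)$ (Proposition~\ref{P:compactness}, Step~1) together with the continuity of $\Psi$ (Lemma~\ref{L:continuityPsi}) identifies the limit as an element of $\Lambda_\varphi^\perp$. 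In effect you trade Robinson's eigenvalue inequality and the bespoke Lemma~\ref{L:Pconverge} for Kato-type continuity of spectral projections at a gap point. The payoff of your approach is modularity and brevity: once tightness is in place, the remaining work is already done elsewhere in the paper, and you avoid the index bookkeeping of the paper's Step~2--3. The paper's approach, on the other hand, is more explicit about the structure of the limit (each limiting projection is shown to be a subprojection of a specific eigenspace of $\rho$), information that could be useful independently but is not needed for the bare statement.
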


\begin{proof}
{\bf Step 1.} Consider the sequences 
$( \lambda_{i}^{n})_{i\in \N}$ and  $(\lambda_{i})_{i\in \N}$
of eigenvalues of $\rho_{n}$ and $\rho$, respectively, 
arranged in decreasing order and repeated according to the multiplicity; in particular both sequences have norm $1$ in $\ell^{1}$. 
Then \cite[Theorem 2]{Robinson} proves that 
$$
\sum_{i} |\lambda_{i}^{n} - \lambda_{i}| \leq \tr|\rho_{n}-\rho|,
$$
giving that 
$( \lambda_{i}^{n})_{i\in \N} \longrightarrow ( \lambda_{i})_{i\in \N}$
in the $\ell^{1}$-norm as $n\to \infty$. 
As a straightforward consequence for each 
$\varepsilon$ there exist $n_{\ve}, M \in \N$ such that 
\begin{equation}\label{E:uniformsmall}
\sum_{i \geq M} \lambda_{i}^{n} \leq \ve, \quad \forall \ n \geq n_{\ve}, \qquad
\sum_{i \geq M} \lambda_{i} \leq \ve.
\end{equation}
It is not restrictive to assume $\lambda_{i} > 0$ 
for each $i< M$ for if this is not the case we can simply lower $M$ without changing the validity of \eqref{E:uniformsmall};  then the $\ell^{1}$-convergence will imply 
\eqref{E:uniformsmall} for $(\lambda_{i}^{n})_i$ as well.

\smallskip
\noindent {\bf Step 2.}
Consider now any sequence 
$\mu_{n} \in \Lambda_{\varphi_{n}}^{\perp}$. To fix the notations we write 
$$
\mu_{n} = \sum_{j} \lambda_{i_{j}}^{n} \delta_{P_{j}^{n}}, 
\qquad \rho_{n} = \sum_{j} \lambda_{i_{j}}^{n} P_{j}^{n}.
$$
Then we proceed as follows: denote with $m\in \N$ the first 
number such that $\lambda_{m} = 0$, with 
$(\lambda_{i})_{i\in \N}$ seen as an element of $\ell^{1}(\N)$;
in particular $m \geq M$. 
If $\lambda_{i} > 0$ for all $i \in \N$, we pose $m = \infty$.

From $\ell^{1}$-convergence we have  
$\lambda_{i}^{n} \longrightarrow \lambda_{i}$ as $n\to \infty$.
Hence for each $j \in N$ such that $i_{j}< m$, the sequence 
of projections $\big{(} P_{j}^{n}\big{)}_{n\in \N}$ has trace uniformly bounded; hence $w^{*}$-precompactness and 
Lemma \ref{L:Pconverge} imply the existence of a subsequence 
$n_{k}$ and of a projection $P_{j}$ such that 
$$
P_{j}^{n_{k}} \longrightarrow P_{j},\quad \rho P_{j} = \lambda_{j} P_{j}.
$$
Via the usual diagonal argument,
we deduce the existence of a subsequence, 
still denoted by $n_{k}$, such that for each 
$j \in \N$ such that $i_{j}< m$ 
$$
P_{j}^{n_{k}} \longrightarrow P_{j},\quad \rho P_{j} = \lambda_{j} P_{j},
$$
as $k \to \infty$. We define then 
$\mu : = \sum_{j} \lambda_{j} \delta_{P_{j}}$.
By the norm convergence, if $j_{1}\neq j_{2}$ then
$$
\tr(P_{j_{1}}P_{j_{2}}) = 0, 
$$
and by $\rho P_{j} = \lambda_{j} P_{j}$ it follows that 
$\rho \geq \sum_{j} \lambda_{j} P_{j}$.
Moreover, since 
$$
\sum_{j \colon i_{j}\geq M }\lambda_{i_{j}}^{n_{k}} 
\tr(P_{j}^{n_{k}}) = \sum_{i \geq M} \lambda_{i}^{n} \leq \ve,
$$
it follows that 
$$
\sum_{j} \lambda_{j} \tr(P_{j}) \geq 
\limsup_{n} \sum_{j \colon i_{j} \leq M} 
\lambda_{i_{j}}^{n} \tr(P_{j}^{n_{k}}) \geq 1 -\ve.
$$
Since $\ve$ was arbitrarily chosen and did not play any role in the construction of $\mu$, it follows that  
$\sum_{j} \lambda_{j} \tr(P_{j}) = 1$, giving, 
$\mu \in \Lambda_{\varphi}^{\perp}$.
As byproduct we have also shown that the sequence
$\big{(}\lambda_{i_{j}^{n_{k}}} \tr(P_{j}^{n_{k}}) \big{)}$ 
converges to $\big{(}\lambda_{j} \tr(P_{j})\big{)}$  in $\ell^{1}(\N)$.

\smallskip
\noindent {\bf Step 3.}
The claim is now equivalent to proving that 
for any $f \in C_{b}(\mathsf{P}_{c})$
$$
\lim_{k\to \infty} 
\sum_{j} \lambda_{i_{j}}^{n_{k}}\tr(P_{j}^{n_{k}})f(P_{j}^{n_{k}})
= \sum_{j} \lambda_{j}\tr(P_{j})f(P_{j}). 
$$
This now follows from the $\ell^{1}(\N)$-convergence of
$\big{(}\lambda_{i_{j}^{n_{k}}} \tr(P_{j}^{n_{k}}) \big{)}$ to
$\big{(}\lambda_{j} \tr(P_{j})\big{)}$ and the norm convergence of each 
$P_{j}^{n_{k}}$ to $P_{j}$ (implying convergence in $\mathsf{d}$) coupled with continuity and boundedness of $f$.
\end{proof}

We summarise the results in the next
statement whose proof will be an easy consequence of previous convergence results.

\begin{theorem}\label{T:summary}
Let $\varphi_{n}, \varphi \in \mathcal{S}_{n}(\mathsf{B}(\mathsf{H}))$ be normal
states and consider 
$\mu_{n}\in \Lambda_{\varphi_{n}}^{\perp}, \mu \in \Lambda_{\varphi}^{\perp}$.
Then 
\begin{itemize}
\item[1.] If $\tr(\cdot) \, \mu_{n} \weak \tr(\cdot)\, \mu$ in duality with 
$C_{b}(\mathsf{P}_{c})$, then $\varphi_{n}\weak \varphi$ in the $w^{*}$-sense.
\item[2.] If $\varphi_{n}\weak \varphi$ in the $w^{*}$-sense
then there exist a subsequence $(n_{k})_{k\in\N}$ and 
$\bar \mu \in \Lambda_{\varphi}^{\perp}$ such that 
$\tr(\cdot) \, \mu_{n_{k}} \to \tr(\cdot)\bar\mu$ in duality with $C_{b}(\mathsf{P}_{c})$.
\end{itemize}
\end{theorem}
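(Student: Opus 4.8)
The plan is to read off both assertions directly from the two convergence results established immediately before the statement, namely Lemma~\ref{L:continuityPsi} and Proposition~\ref{P:weakweak}; no new computation should be required, only a careful matching of hypotheses.

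For part~1, I would first unwind the membership $\mu_n \in \Lambda_{\varphi_n}^\perp$. By the definition \eqref{E:allmeasures} this means exactly $\Psi(\mu_n) = \rho_{\varphi_n}$, i.e., under the isomorphism $\mathcal{C}(\mathsf{H}) \simeq \mathcal{S}_n(\mathsf{B}(\mathsf{H}))$ the state $\Psi(\mu_n)$ coincides with $\varphi_n$; in the same way $\Psi(\mu) = \varphi$. The assumption $\tr(\cdot)\mu_n \weak \tr(\cdot)\mu$ is then precisely the hypothesis of Lemma~\ref{L:continuityPsi}, and its conclusion $\Psi(\mu_n) \weak \Psi(\mu)$ in the $w^*$-sense reads, after this identification, as $\varphi_n \weak \varphi$. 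This settles the first implication.

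For part~2, I would simply apply Proposition~\ref{P:weakweak} to the sequence $\mu_n \in \Lambda_{\varphi_n}^\perp$ already at hand: the hypothesis $\varphi_n \weak \varphi$ provides a subsequence $(n_k)_{k\in\N}$ and a limit $\bar\mu \in \Lambda_\varphi^\perp$ with $\tr(\cdot)\mu_{n_k} \weak \tr(\cdot)\bar\mu$ in duality with $C_b(\mathsf{P}_c)$, which is verbatim the desired conclusion.

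I do not expect any genuine obstacle here, since the substantive work --- tightness, the $w^*$-convergence of the individual spectral projections, and the $\ell^1$-convergence of the eigenvalue sequences --- has already been carried out in Lemma~\ref{L:Pconverge} and Proposition~\ref{P:weakweak}. The only point demanding attention is the bookkeeping in part~1: one must keep in mind that belonging to $\Lambda_{\varphi_n}^\perp$ is exactly the constraint $\Psi(\mu_n) = \rho_{\varphi_n}$, so that the abstract continuity of $\Psi$ translates into the sought statement about the specific states $\varphi_n$ and $\varphi$.
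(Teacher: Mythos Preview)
Your proposal is correct and matches the paper's own proof essentially verbatim: the paper simply states that the first point is Lemma~\ref{L:continuityPsi} and the second is precisely Proposition~\ref{P:weakweak}. Your additional sentence unpacking why $\Psi(\mu_n)=\varphi_n$ via the definition of $\Lambda_{\varphi_n}^\perp$ is the only (minor) elaboration beyond what the paper writes.
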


\begin{proof}
The first point is Lemma \ref{L:continuityPsi}
while the second part of the claim is 
precisely Proposition \ref{P:weakweak}. 
\end{proof}

\bigskip
%----------------------------------------------------------------------------------------
\section{Wasserstein distance between normal states}\label{S:Wasserstein}

We will use the metric structure of $(\mathsf{P}_c,\mathsf{d})$ reviewed in 
Section \ref{Ss:project}, together with the map $\Psi$ 
(Definition \ref{D:mapPsi}) 
 to define a \emph{static} Wasserstein distance between 
normal states of $\mathsf{B}(\mathsf{H})$. 
The classical definition of $p$-Wasserstein distance 
over $(\mathsf{P}_c,\mathsf{d})$ (being $\sfd$ an extended metric does not hurt the definition), the plan is to push it to normal states via $\Psi$. 

\noindent With this motivation in mind, we begin describing in details $W^{\mathsf{P}_{c}}_{p}$.
\smallskip

%------------------------------------------------------------------
\subsection{Wasserstein distance over $\mathcal{P}(\mathsf{P}_{c})$}

In the classical setting optimal transportation is encoded in transport plans,
i.e. probability measures over the product space with assigned marginals.
As the metric $\sfd$ is finite solely when restricted on each connected component of 
$\mathsf{P}_{c}$, 
we will consider a more stringent notion of transport plan
(recall that $P,Q\in \mathsf{P}_{c}$ belong to the same connected component if and only if
$\operatorname{dim} R(P)=\operatorname{dim} R(Q)$).

\begin{definition}\label{D:transportplan}
Given two probability measures $\mu_{0},\mu_{1} \in \mathcal{P}(\mathsf{P}_{c})$, 
the set of $\sfd$-transport plans between $\mu_{0}$ and $\mu_{1}$ 
will be given by
\begin{equation}\label{E:FPi}
\Pi_{\sfd}(\mu_{0}, \mu_{1}) : = 
\Big\{ \nu \in \Pi(\mu_{0}, \mu_{1}) \colon 
\dim(R(P)) = \dim(R(Q)), \ \nu-a.e.\Big\},
\end{equation}
where $\Pi(\mu_{0}, \mu_{1}) = 
\{ \nu \in \mathcal{P}(\mathsf{P}_{c} \times \mathsf{P}_{c}) \colon 
(\pi_{1})_{\sharp} \nu = \mu_{0},\ (\pi_{2})_{\sharp} \nu = \mu_{1} \}$ is the classical notation for transport plans and 
$\pi_{i} : \mathsf{P}_{c} \times \mathsf{P}_{c} \to \mathsf{P}_{c}$ is the projection 
on the $i$-th component, for $i = 1,2$.
\end{definition}

The set of $\sfd$-transport plans $\Pi_{\sfd}(\mu_{0}, \mu_{1})$ 
is a, possibly empty, convex subset of $\mathcal{P}(\mathsf{P}_{c} \times \mathsf{P}_{c})$.
Then we will define the following optimal transport distance.

\begin{definition}\label{D:Wassersteindistance} 
Given $\mu_{0},\mu_{1} \in \mathcal{P}(\mathsf{P}_{c})$, 
for any $p\geq 1$ we define their $W_{p}^{\mathsf{P}_{c}}$ distance as follows:
\begin{equation}\label{E:OTstates}
W_{p}^{\mathsf{P}_{c}}(\mu_{0},\mu_{1}) : = 
\inf_{\nu \in \Pi_{\sfd}(\mu_{0},\mu_{1})}
\left( \int_{\mathsf{P}_{c}\times \mathsf{P}_{c}} \sfd(P,Q)^{p}\, \nu(dPdQ),
\right)^{\frac{1}{p}}
\end{equation}
where $\sfd$ is the extended geodesic distance of $\mathsf{P}_{c}$.
Whenever the set $\Pi_{\sfd}((\mu_{0},\mu_{1}))$ is empty we pose 
$W_{p}^{\mathsf{P}_{c}}(\mu_{0},\mu_{1}) := +\infty$.
\end{definition}

It is fairly easy (and almost identical to the classical case) 
to prove existence of optimal transport plans.

\begin{theorem}[Existence of optimal plans]\label{T:existence}
Given $\mu_{0},\mu_{1} \in \mathcal{P}(\mathsf{P}_{c})$, 
there exists an optimal plan $\nu \in \Pi_{\sfd}(\mu_{0},\mu_{1})$ such that 
$$
W_{p}^{\mathsf{P}_{c}}(\mu_{0},\mu_{1})^{p} = \int_{\mathsf{P}_{c}\times \mathsf{P}_{c}} 
\sfd(P,Q)^{p}\, \nu(dPdQ),
$$
provided the set of admissible plan $\Pi_{\sfd}(\mu_{0},\mu_{1})$ is not empty.
\end{theorem}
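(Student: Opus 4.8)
The plan is to run the classical direct method of the calculus of variations, the only novelties with respect to the standard Polish setting being the extended nature of $\sfd$ and the dimensional constraint built into $\Pi_{\sfd}$. Throughout I assume $\Pi_{\sfd}(\mu_{0},\mu_{1})\neq\emptyset$, and I abbreviate the cost by $c(P,Q):=\sfd(P,Q)^{p}$, a map $\mathsf{P}_{c}\times\mathsf{P}_{c}\to[0,+\infty]$. I take a minimizing sequence $\nu_{n}\in\Pi_{\sfd}(\mu_{0},\mu_{1})$ with $\int c\,d\nu_{n}\to W_{p}^{\mathsf{P}_{c}}(\mu_{0},\mu_{1})^{p}$ and aim to extract a limit that is simultaneously admissible and no worse than the infimum.

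First I would establish compactness of the ambient set of couplings. Since $\mu_{0},\mu_{1}$ are Borel probability measures on the Polish space $(\mathsf{P}_{c},\sfd)$, each is tight by Prokhorov's theorem: for every $\ve>0$ there are compacts $K_{0},K_{1}\subset\mathsf{P}_{c}$ with $\mu_{i}(K_{i})\geq 1-\ve/2$. For any $\nu\in\Pi(\mu_{0},\mu_{1})$ one then estimates $\nu\big((K_{0}\times K_{1})^{c}\big)\leq\mu_{0}(K_{0}^{c})+\mu_{1}(K_{1}^{c})\leq\ve$, so the whole family $\Pi(\mu_{0},\mu_{1})$ is tight, hence relatively compact by Prokhorov; being weakly closed (weak limits of couplings retain the prescribed marginals), it is in fact weakly compact. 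Thus a subsequence $\nu_{n_{k}}\weak\nu$ exists.

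The two places where the non-classical structure enters are the closedness of the constraint and the lower semicontinuity of the cost, and both are resolved by the fact that each connected component $\mathsf{P}_{r}$ of \eqref{E:connectedPc} is clopen (the rank $P\mapsto\tr(P)$ is continuous and integer-valued, hence locally constant). Consequently the diagonal-in-rank set $D:=\bigcup_{r}\mathsf{P}_{r}\times\mathsf{P}_{r}$ is clopen with open complement, and a plan lies in $\Pi_{\sfd}$ exactly when it is concentrated on $D$, i.e. $\nu(D^{c})=0$. By the Portmanteau theorem applied to the open set $D^{c}$ we get $\nu(D^{c})\leq\liminf_{k}\nu_{n_{k}}(D^{c})=0$, so $\nu(D)=1$ and $\nu\in\Pi_{\sfd}(\mu_{0},\mu_{1})$. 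Likewise $c$ is lower semicontinuous: it is continuous on each clopen block $\mathsf{P}_{r}\times\mathsf{P}_{r}$ (there $\sfd$ is the genuine geodesic distance inducing the topology) and equals $+\infty$ on the open set $D^{c}$, so the $\liminf$ inequality holds trivially at every point.

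It remains to combine these ingredients. Since $c\geq 0$ is lower semicontinuous, the associated integral functional is weakly lower semicontinuous, whence
\[
\int_{\mathsf{P}_{c}\times\mathsf{P}_{c}} c\,d\nu\;\leq\;\liminf_{k\to\infty}\int_{\mathsf{P}_{c}\times\mathsf{P}_{c}} c\,d\nu_{n_{k}}\;=\;W_{p}^{\mathsf{P}_{c}}(\mu_{0},\mu_{1})^{p}.
\]
As $\nu\in\Pi_{\sfd}(\mu_{0},\mu_{1})$ is admissible, the reverse inequality $\int c\,d\nu\geq W_{p}^{\mathsf{P}_{c}}(\mu_{0},\mu_{1})^{p}$ holds by definition of the infimum, so $\nu$ is the sought optimal plan. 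The only genuinely delicate point---more bookkeeping than real difficulty---is the stability of the dimensional constraint under weak limits, which the clopen decomposition of $\mathsf{P}_{c}$ into components of constant rank settles cleanly; the extendedness of $\sfd$ causes no trouble precisely because it takes the value $+\infty$ on an open set.
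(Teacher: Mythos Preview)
Your proof is correct and follows essentially the same route as the paper: tightness of couplings from tightness of the marginals, closedness of the rank constraint via the Portmanteau theorem on the open set $D^{c}$ (this is exactly the content of the paper's Lemma~\ref{L:compactness}), and then passage to the limit in the cost. The only cosmetic difference is that the paper, observing $\sfd\leq\pi/2$ on $D$ and that all admissible plans are concentrated there, treats $\sfd^{p}$ as a bounded continuous test function and obtains convergence of the integrals directly, whereas you invoke lower semicontinuity and close with the trivial reverse inequality; both are standard and equivalent here.
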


\begin{proof}
Since $\Pi_{\sfd}(\mu_{0},\mu_{1}) \neq \emptyset$ and $\sfd \leq \frac{\pi}{2}$,
there exists a minimizing sequence 
$\nu_{n} \in \Pi_{\sfd}(\mu_{0},\mu_{1})$ such that 
$$
\lim_{n} \int_{\mathsf{P}_{c}\times \mathsf{P}_{c}} \sfd(P,Q)^{p} \,\nu_{n}(dPdQ)= 
W_{p}^{\mathsf{P}_{c}}(\mu_{0},\mu_{1})^{p}. 
$$
Thanks to the following Lemma \ref{L:compactness}, there exist
$\nu_{n_{k}},\nu \in \Pi_{\sfd}(\mu_{0},\mu_{1})$ such that 
$\nu_{n_{k}}\weak \nu$, in duality with $C_{b}(\mathsf{P}_{c}\times \mathsf{P}_{c})$. 
Being the distance continuous and bounded it follows that 
$$
W_{p}^{\mathsf{P}_{c}}(\mu_{0},\mu_{1})^{p} = \lim_{k} \int_{\mathsf{P}_{c}\times \mathsf{P}_{c}} 
\sfd(P,Q)^{p} \, \nu_{n_{k}}(dPdQ) =
\int_{\mathsf{P}_{c}\times \mathsf{P}_{c}} \sfd(P,Q)^{p} \,\nu(dPdQ) 
$$
proving the claim.
\end{proof}

\begin{lemma}\label{L:compactness}
Given $\mu_{0},\mu_{1} \in \mathcal{P}(\mathsf{P}_{c})$,
for any sequence  $\nu_{n} \in \Pi_{\sfd}(\mu_{0},\mu_{1})$
there exist a subsequence $\nu_{n_{k}}$ and 
$\nu \in \Pi_{\sfd}(\mu_{0},\mu_{1})$ 
such that $\nu_{n_{k}} \weak \nu$ in duality with  any 
$f \in C_{b}(\mathsf{P}_{c}\times \mathsf{P}_{c})$.
\end{lemma}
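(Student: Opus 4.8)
The plan is to exhibit $\Pi_{\sfd}(\mu_{0},\mu_{1})$ as a weakly sequentially closed subset of the classical set of transport plans $\Pi(\mu_{0},\mu_{1})$ and to invoke the standard compactness of the latter, the only new ingredient being the stability of the dimensional constraint under weak limits. First I would recall that $\Pi(\mu_{0},\mu_{1})$ is tight: given $\ve>0$, since $\mu_{0},\mu_{1}$ are Borel probability measures on the Polish space $\mathsf{P}_{c}$ they are tight, so there are compact sets $K_{0},K_{1}\subset\mathsf{P}_{c}$ with $\mu_{i}(K_{i})\geq 1-\ve/2$; then for every $\nu\in\Pi(\mu_{0},\mu_{1})$ one has $\nu(K_{0}\times K_{1})\geq 1-\mu_{0}(K_{0}^{c})-\mu_{1}(K_{1}^{c})\geq 1-\ve$, so the compact set $K_{0}\times K_{1}$ witnesses tightness. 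By the Prohorov theorem the sequence $\nu_{n}$ (which lies in $\Pi(\mu_{0},\mu_{1})$) admits a subsequence $\nu_{n_{k}}\weak\nu$ in duality with $C_{b}(\mathsf{P}_{c}\times\mathsf{P}_{c})$.

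Next I would verify that the limit keeps the prescribed marginals. Since the projections $\pi_{i}:\mathsf{P}_{c}\times\mathsf{P}_{c}\to\mathsf{P}_{c}$ are continuous, push-forward is weakly continuous, so $(\pi_{i})_{\sharp}\nu_{n_{k}}\weak(\pi_{i})_{\sharp}\nu$; as $(\pi_{i})_{\sharp}\nu_{n_{k}}=\mu_{i}$ for every $k$, uniqueness of weak limits yields $(\pi_{i})_{\sharp}\nu=\mu_{i}$, whence $\nu\in\Pi(\mu_{0},\mu_{1})$.

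The genuinely new point, and the step I expect to be the main obstacle, is showing that $\nu$ still satisfies the $\sfd$-admissibility condition, i.e. $\operatorname{dim}R(P)=\operatorname{dim}R(Q)$ for $\nu$-a.e. $(P,Q)$. Here I would exploit that the rank is locally constant on $\mathsf{P}_{c}$: by the local transitivity recalled in Section \ref{Ss:project}, if $\|P-Q\|<1$ then $Q=gPg^{*}$ for some unitary $g$, so $P$ and $Q$ have equal rank; hence each component $\mathsf{P}_{r}$ is open, and being the complement of the union of the remaining (open) components it is also closed. Consequently the diagonal-block set
$$
D:=\big\{(P,Q)\in\mathsf{P}_{c}\times\mathsf{P}_{c}:\ \operatorname{dim}R(P)=\operatorname{dim}R(Q)\big\}=\bigcup_{r}\mathsf{P}_{r}\times\mathsf{P}_{r}
$$
is a union of clopen sets, hence open, while its complement is a union of the open products $\mathsf{P}_{r}\times\mathsf{P}_{s}$ with $r\neq s$, hence also open; thus $D$ is clopen and its indicator $\mathbf{1}_{D}$ belongs to $C_{b}(\mathsf{P}_{c}\times\mathsf{P}_{c})$. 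Testing the weak convergence $\nu_{n_{k}}\weak\nu$ against $\mathbf{1}_{D}$ and using $\nu_{n_{k}}(D)=1$ gives $\nu(D)=\lim_{k}\nu_{n_{k}}(D)=1$. Therefore $\operatorname{dim}R(P)=\operatorname{dim}R(Q)$ holds $\nu$-a.e., so $\nu\in\Pi_{\sfd}(\mu_{0},\mu_{1})$, completing the argument.
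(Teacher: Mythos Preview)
Your proof is correct and follows essentially the same approach as the paper: tightness of $\Pi(\mu_0,\mu_1)$ via tightness of the marginals, Prokhorov for the converging subsequence, preservation of marginals, and finally stability of the rank constraint using that the rank is locally constant on $\mathsf{P}_c$. The only cosmetic difference is in the last step: the paper tests the open set $\{(P,Q):\dim R(P)-\dim R(Q)>0\}$ via the portmanteau inequality, whereas you observe that the diagonal block set $D$ is clopen and test directly against its (continuous bounded) indicator---both arguments rely on exactly the same fact, namely that the components $\mathsf{P}_r$ are clopen.
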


Even tough the argument is similar to the classical case, for readers' convenience we include the proof.

\begin{proof}
By inner regularity of probability measures over Polish spaces, 
for any $\ve>0$
there exit compact sets $K_{1} \subset \supp(\mu_{0})$ and 
$K_{2} \subset \supp(\mu_{1})$ such that $\mu_{0}(K_{1}), \mu_{1}(K_{2}) \geq 1-\ve$ implying that $\nu_{n}(K_{1}\times K_{2}) \geq 1 - 2\ve$,
showing that 
$\nu_{n}$ is tight. 
Then Prohorov's Theorem ensures the existence of subsequence 
$\nu_{n_{k}}$ and of $\nu \in \mathcal{P}(\mathsf{P}_{c}\times\mathsf{P}_{c})$ 
such that $\nu_{n_{k}}\weak \nu$ in duality with  any 
$f \in C_{b}(\mathsf{P}_{c}\times \mathsf{P}_{c})$. 
In particular this implies that $(\pi_{1})_{\sharp} \nu = \mu_{0}$ and
$(\pi_{2})_{\sharp} \nu = \mu_{1}$, 
proving that $\nu \in \Pi(\mu_{0},\mu_{1})$.

To conclude, consider the function $f : \mathsf{P}_{c}\times \mathsf{P}_{c} \to \R$ defined by $f(P,Q) : = \dim(R(P)) - \dim(R(Q))$. The function $f$ is locally constant 
and therefore continuous. Hence the set 
$$
C: = 
\{ (P,Q) \in \mathsf{P}_{c} \times \mathsf{P}_{c} \colon f(P,Q) > 0 \}
$$
is open giving that $0=\liminf \nu_{n_{k}}(C) \geq \nu(C) \geq 0$, proving the claim.
\end{proof}

We conclude this short overview on optimal transport in 
$\mathcal{P}(\mathsf{P}_{c})$ by recalling the simple
relation between Wasserstein 
topology and weak topology. 
Here we refer to \cite[Theorem 6.9]{Vil}: 
if $\mu_{n},\mu  \in \mathcal{P}(\mathsf{P}_{k})$ 
for some $k$ independent of $n$, then 
$$
\mu_{n}\weak \mu \iff W_{p}^{\mathsf{P}_{c}}(\mu_{n},\mu) \to 0,
$$
for any $p \geq 1$. Recall indeed that 
$(\mathsf{P}_{k},\sfd)$ is a complete and separable metric 
spaces with $\sfd \leq \pi/2$.

%%%%%%%%%%%%%%%%%%%%%%%%%%%%%%%%%%%%%%%%%%%%%%%%%%%%%%%%%%%%%%%%%%%%%%%%%%%%%%%%%%%%%
%%%%%%%%%%%%%%%%%%%%%%%%%%%%%%%%%%%%%%%%%%%%%%%%%%%%%%%%%%%%%%%%%%%%%%%%%%%%%%%%%%%%%

\subsection{The Optimal Transport Cost in $\mathcal{S}_{n}(\mathsf{B}(\mathsf{H}))$}\label{Ss:cost}

We now consider the natural optimal transport problem 
between normal states.

The use of multiple representations for the density matrices, 
i.e. $\Lambda_{\varphi}^{\perp}$ 
and $\Lambda_{\psi}^{\perp}$, 
together with the many connected components $(\mathsf{P}_c,\mathsf{d})$,
motivate the following defintion.

\begin{definition}\label{D:W_{p}}
For any $\varphi,\psi \in \mathcal{S}_{n}(\mathsf{B(H)})$ and $p\geq 1$ define their optimal transport \emph{cost} by
\begin{equation}\label{E:C_{p}}
\mathcal{C}_{p}(\varphi,\psi) : 
= \inf_{\left.\begin{array}{c}\mu_{0} \in \Lambda_{\varphi}^{\perp} \\\mu_{1}\in \Lambda_{\psi}^{\perp}\end{array}\right.
} W_{p}^{\mathsf{P}_{c}}\Big{(}\tr(\cdot)\,\mu_{0}, 
\tr(\cdot)\,\mu_{1}\Big{)},
\end{equation}
where $\Lambda_{\varphi}^{\perp},\Lambda_{\psi}^{\perp} \subset \mathcal{D}^{\perp}_{1}(\mathsf{P}_{c})$ have been defined in \eqref{E:allmeasures}.
\end{definition}

\begin{remark}\label{R:distancecompact}
Clearly an alternative way of writing $\mathcal{C}_{p}$ 
is to interpret it as the distance between two disjoint 
compact sets: For any $\varphi,\psi \in \mathcal{S}_{n}(\mathsf{B(H)})$ and $p\geq 1$
\begin{equation}\label{E:distancecompactse}
\mathcal{C}_{p}(\varphi,\psi)  = 
W_{p}^{\mathsf{P}_{c}}
(\tr(\cdot) \Lambda_{\varphi}^{\perp},\tr(\cdot) \Lambda_{\psi}^{\perp}),
\end{equation}
where as usual the distance between two compact sets 
is computed taking the infimum of all possible distances.
\end{remark}

It is immediate to check that $\mathcal{C}_{p}$ is bounded. 

\begin{lemma}
Given any $\varphi,\psi \in \mathcal{S}_{n}(\mathsf{B}(\mathsf{H}))$
we have
$\mathcal{C}_{p}(\varphi,\psi) \leq \pi/2$.
\end{lemma}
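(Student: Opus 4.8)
The plan is to exploit that $\mathcal{C}_{p}$ is defined as an infimum over all admissible representations, so it suffices to produce \emph{one} convenient pair $\mu_{0} \in \Lambda_{\varphi}^{\perp}$, $\mu_{1}\in \Lambda_{\psi}^{\perp}$ together with a single admissible $\sfd$-transport plan between $\tr(\cdot)\mu_{0}$ and $\tr(\cdot)\mu_{1}$ whose $p$-cost does not exceed $\pi/2$. The whole point is to arrange both measures to live on the \emph{same} connected component, where the extended metric $\sfd$ is finite.

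First I would reduce everything to the component $\mathsf{P}_{1}$ of rank-one projections. Writing the spectral decomposition $\rho_{\varphi} = \sum_{i} \lambda_{i} P_{V_{i}}$ and fixing an orthonormal basis $\{e_{i,k}\}_{k}$ of each eigenspace $V_{i}$, one has $\rho_{\varphi} = \sum_{i,k} \lambda_{i}\, |e_{i,k}\rangle\langle e_{i,k}|$, where the rank-one projections $|e_{i,k}\rangle\langle e_{i,k}|$ form a pairwise orthogonal family. Hence $\mu_{0} := \sum_{i,k} \lambda_{i}\, \delta_{|e_{i,k}\rangle\langle e_{i,k}|}$ belongs to $\Lambda_{\varphi}^{\perp}$ and is supported inside $\mathsf{P}_{1}$; the analogous construction gives $\mu_{1}\in \Lambda_{\psi}^{\perp}$ supported inside $\mathsf{P}_{1}$. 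Since every projection in the two supports has rank one, i.e.\ $\tr(P)=1$, the weighted measures $\tr(\cdot)\mu_{0}$ and $\tr(\cdot)\mu_{1}$ coincide with $\mu_{0}$ and $\mu_{1}$ and are genuine probability measures, both concentrated on the single connected component $\mathsf{P}_{1}$.

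At this point the dimension-preservation constraint in Definition \ref{D:transportplan} is automatically satisfied: since both marginals live on $\mathsf{P}_{1}$, any coupling $\nu \in \Pi(\mu_{0},\mu_{1})$ --- for instance the product $\mu_{0}\otimes \mu_{1}$ --- satisfies $\dim R(P) = \dim R(Q) = 1$ for $\nu$-a.e.\ $(P,Q)$, so $\Pi_{\sfd}(\mu_{0},\mu_{1}) \neq \emptyset$. Finally, invoking the bound $\sfd \leq \pi/2$ valid on each connected component of $\mathsf{P}_{c}$ (recalled in Section \ref{Ss:project} and already used in Theorem \ref{T:existence}), the $p$-cost of this coupling is at most $\pi/2$, whence $W_{p}^{\mathsf{P}_{c}}\big(\tr(\cdot)\mu_{0}, \tr(\cdot)\mu_{1}\big) \leq \pi/2$ and, taking the infimum over representations, $\mathcal{C}_{p}(\varphi,\psi) \leq \pi/2$.

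There is no serious obstacle here: the only points to verify are that the rank-one refinement indeed yields an element of $\Lambda_{\varphi}^{\perp}$ (pairwise orthogonality of the $|e_{i,k}\rangle\langle e_{i,k}|$ and total weighted mass $\sum_{i,k}\lambda_{i} = \sum_{i}\lambda_{i}\dim(V_{i}) = \tr(\rho_{\varphi}) = 1$) and that restricting the two marginals to a single component removes the infinite values of the extended metric. Both are immediate from the preliminaries, so the estimate reduces entirely to the diameter bound $\sfd \leq \pi/2$.
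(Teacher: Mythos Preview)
Your proof is correct and follows essentially the same approach as the paper: choose rank-one representations $\mu_{0}\in\Lambda_{\varphi}^{\perp}$, $\mu_{1}\in\Lambda_{\psi}^{\perp}$ supported in $\mathsf{P}_{1}$, so that any coupling is admissible and the diameter bound $\sfd\leq \pi/2$ on the connected component gives the estimate. Your write-up is in fact more explicit than the paper's, spelling out the construction of the rank-one refinement and the verification that it lies in $\Lambda_{\varphi}^{\perp}$.
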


\begin{proof}
It is sufficient to observe that given any 
$\varphi \in \mathcal{S}_{n}(\mathsf{B}(\mathsf{H}))$ there exists
$\mu_{0} \in 
\Lambda_{\varphi}^{\perp}$ 
such that $\supp(\mu_0) \subset \mathsf{P}_{1}$ 
(recall \eqref{E:connectedPc}).
Hence by definition 
$$
\mathcal{C}_{p}(\varphi,\psi) \leq W_{p}^{\mathsf{P}_{c}}(\mu_{0},\mu_{1}) \leq \pi/2,
$$
where $\mu_{1} \in \Lambda_{\psi}^{\perp}$ 
and $\supp(\mu_{0}),\supp(\mu_{1}) \subset \mathsf{P}_{1}$. 
The second inequality follows from $\sfd(P,Q) \leq \pi/2$ whenever $P,Q \in \mathsf{P}$
belong to the same connected component.
\end{proof}

Relying on Proposition \ref{P:dimension1},
we deduce that looking among those spectral representations 
of states using projections with one-dimensional range does not change 
the cost functional. 

\begin{proposition}\label{P:dimension1Cost}
For any $\varphi,\psi \in \mathcal{S}_{n}(\mathsf{B(H)})$, 
$$
\mathcal{C}_{p}(\varphi,\psi) = 
\inf_{\left.
\begin{array}{c}
\mu_{0} \in \Lambda_{\varphi}^{\perp}\cap \mathcal{P}(\mathsf{P}_{1}) \\
\mu_{1}\in \Lambda_{\psi}^{\perp}\cap \mathcal{P}(\mathsf{P}_{1})
\end{array}\right .} 
W_{p}^{\mathsf{P}_{c}}\Big{(}\mu_{0}, 
\mu_{1}\Big{)},
$$
\end{proposition}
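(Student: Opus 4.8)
The inequality $\ge$ is immediate, since $\Lambda_\varphi^\perp\cap\mathcal P(\mathsf P_1)\subset\Lambda_\varphi^\perp$ (and likewise for $\psi$), so the constrained infimum dominates $\mathcal C_p(\varphi,\psi)$. The substance is the reverse inequality, and my plan is to refine an arbitrary admissible pair to one supported on rank-one projections without increasing the cost. So I would fix $\mu_0=\sum_i\lambda_i\delta_{P_i}\in\Lambda_\varphi^\perp$ and $\mu_1=\sum_j\beta_j\delta_{Q_j}\in\Lambda_\psi^\perp$ and, by Theorem \ref{T:existence}, pick an optimal plan $\nu$ for $W_p^{\mathsf P_c}(\tr(\cdot)\mu_0,\tr(\cdot)\mu_1)$, recalling that $\nu$ charges only pairs $(P,Q)$ with $\dim R(P)=\dim R(Q)$. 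The crucial preliminary step is to choose $\nu$ so that its support, regarded as a bipartite graph on $\supp(\mu_0)\sqcup\supp(\mu_1)$, is \emph{acyclic}: the set of optimal plans is compact (Lemma \ref{L:compactness}) and convex, hence has extreme points, and an extreme point cannot contain a cycle, for redistributing mass alternately along a finite cycle either lowers the cost (contradicting optimality) or preserves it (contradicting extremality). I expect this acyclicity reduction to be the main obstacle, because a cycle would force a nontrivial monodromy on the conjugations appearing below and obstruct a globally consistent refinement.

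Granting an acyclic $\nu$, along each edge $(P,Q)\in\supp(\nu)$ I would take a minimal geodesic $t\mapsto e^{itZ_{PQ}}Pe^{-itZ_{PQ}}$ with $Q=u_{PQ}Pu_{PQ}^*$, $u_{PQ}:=e^{iZ_{PQ}}$, and $\sfd(P,Q)$ equal to its length (such geodesics exist by the structure recalled for $\mathsf P_r$). On each tree of the forest $\supp(\nu)$ I fix a root, choose an orthonormal basis of the corresponding eigenspace to get a rank-one orthogonal splitting of the root projection, and then propagate: the splitting of the second endpoint of an edge is declared to be the $u_{PQ}$-conjugate of the splitting of the first. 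Because the graph is a forest this is well defined and produces a \emph{single} decomposition $P=\sum_k p_P^{k}$ for each $P\in\supp(\mu_0)$ and $Q=\sum_k q_Q^{k}$ for each $Q\in\supp(\mu_1)$, into rank-one orthogonal projections, with the matching property $q_Q^{k}=u_{PQ}\,p_P^{k}\,u_{PQ}^*$ (after relabelling) along every edge. Proposition \ref{P:dimension1}, applied to $p_P^{k}\le P$ transported along $e^{itZ_{PQ}}Pe^{-itZ_{PQ}}$, then yields $\sfd(p_P^{k},q_Q^{k})\le\sfd(P,Q)$.

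I would then set
\[
\tilde\mu_0:=\sum_{P}\lambda_P\sum_k\delta_{p_P^{k}},\qquad
\tilde\mu_1:=\sum_{Q}\beta_Q\sum_k\delta_{q_Q^{k}}.
\]
Orthogonality of the $p_P^{k}$ within each $P$, together with $P_i\perp P_{i'}$ for $\mu_0\in\mathcal D_1^\perp$, shows $\tilde\mu_0$ is carried by a pairwise orthogonal family in $\mathsf P_1$, while $\Psi(\tilde\mu_0)=\sum_P\lambda_P\sum_k p_P^{k}=\sum_P\lambda_P P=\rho_\varphi$; hence $\tilde\mu_0\in\Lambda_\varphi^\perp\cap\mathcal P(\mathsf P_1)$, and symmetrically $\tilde\mu_1\in\Lambda_\psi^\perp\cap\mathcal P(\mathsf P_1)$. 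Finally I define the refined plan splitting the edge masses $m_{PQ}:=\nu(\{(P,Q)\})$ equally among matched pairs,
\[
\tilde\nu:=\sum_{(P,Q)}\sum_{k=1}^{r}\frac{m_{PQ}}{r}\,\delta_{(p_P^{k},q_Q^{k})},\qquad r=\dim R(P)=\dim R(Q).
\]

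A routine marginal computation, using $\sum_Q m_{PQ}=\lambda_P\tr(P)=\lambda_P r$ and $\sum_P m_{PQ}=\beta_Q r$ together with the matching property, shows that $\tilde\nu$ has marginals $\tr(\cdot)\tilde\mu_0$ and $\tr(\cdot)\tilde\mu_1$, so it is an admissible competitor; and the edgewise bound gives
\[
\int\sfd(P,Q)^{p}\,\tilde\nu(dPdQ)
=\sum_{(P,Q)}\sum_{k=1}^{r}\frac{m_{PQ}}{r}\,\sfd(p_P^{k},q_Q^{k})^{p}
\le\sum_{(P,Q)}m_{PQ}\,\sfd(P,Q)^{p}
=\int\sfd(P,Q)^{p}\,\nu(dPdQ).
\]
Hence $W_p^{\mathsf P_c}(\tr(\cdot)\tilde\mu_0,\tr(\cdot)\tilde\mu_1)\le W_p^{\mathsf P_c}(\tr(\cdot)\mu_0,\tr(\cdot)\mu_1)$; taking the infimum over $\mu_0\in\Lambda_\varphi^\perp$ and $\mu_1\in\Lambda_\psi^\perp$ gives the reverse inequality and the claimed identity.
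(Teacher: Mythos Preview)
Your argument is correct and shares the paper's core mechanism --- split each higher-rank pair $(P,Q)$ in the support of an optimal plan into matched rank-one pieces along a minimal geodesic and invoke Proposition~\ref{P:dimension1} to control the cost --- but you add a substantive preliminary step that the paper's proof elides. The paper replaces one atom $\delta_{P_{0,i}}\otimes\delta_{P_{1,j}}$ at a time by $\tfrac{1}{r}\sum_k\delta_{P_{e_k}}\otimes\delta_{P_{1,e_k}}$ and asserts that ``the marginal measures are still admissible''; however, if $P_{0,i}$ is coupled to several $P_{1,j}$'s (or $P_{1,j}$ to several $P_{0,i}$'s), the edge-by-edge frames of $R(P_{0,i})$ need not agree, and the resulting first marginal carries non-orthogonal rank-one pieces, so it is not in $\mathcal D_1^\perp(\mathsf P_c)$ and hence not in $\Lambda_\varphi^\perp$. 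Your passage to an \emph{extremal} optimal plan, whose bipartite support is therefore a forest, is precisely what makes a globally consistent choice of rank-one splittings possible: on a tree you can root and propagate by conjugation without monodromy, so each $P$ and each $Q$ receives a \emph{single} orthogonal decomposition and $\tilde\mu_0\in\Lambda_\varphi^{\perp,1}$, $\tilde\mu_1\in\Lambda_\psi^{\perp,1}$ genuinely. The price is the Krein--Milman / cycle-cancellation step (which goes through here since the optimal set is weakly compact and convex by Lemma~\ref{L:compactness} and the cost is linear in $\nu$); the payoff is that the orthogonality of the refined marginals --- which the statement actually requires --- is justified rather than asserted.
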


\begin{proof}
Consider any $\mu_{0} \in \Lambda_{\varphi}^{\perp}$, 
$\mu_{1} \in \Lambda_{\psi}^{\perp}$ and
$\nu \in \Pi_{d}(\tr(\cdot)\mu_{0}, \tr(\cdot)\mu_{1})$ (it is not restrictive to assume the existence of at least one transport plan).

We prove the claim showing the existence of 
$\gamma \in \mathcal{P}(\mathsf{P}_{1}\times \mathsf{P}_{1})$ 
such that 
$$
\int \sfd(P,Q)^{p} \gamma(dPdQ) \leq \int \sfd(P,Q)^{p} \nu(dPdQ), 
$$
with $(\pi_{1})_{\sharp}\gamma \in \Lambda_{\varphi}^{\perp}$
and $(\pi_{1})_{\sharp}\gamma \in \Lambda_{\psi}^{\perp}$.

We proceed by writing $\nu$ as follows: if 
$\tr(\cdot)\mu_{0} = \sum_{i} \alpha_{0,i}\delta_{P_{0,i}}$ 
(and analogous one for $\tr(\cdot) \mu_{1}$), 
then
$$
\nu = \sum_{i,j} \beta_{i,j} \delta_{P_{0,i}}\otimes\delta_{P_{1,j}},
$$
for some $\beta_{i,j}\geq 0$ summing to 1. 
Whenever $\beta_{i,j}>0$ and $\tr(P_{0,i}) =r > 1$, 
we consider any orthonormal frame of $R(P_{0,i})$, say $e_{1},\dots, e_{r}$
such that $\sum_{k\leq r} P_{e_{k}} = P_{0,i}$. 

We also consider $Z \in T_{P_{0,i}}\mathsf{P}$ such that 
$P_{1,j} = e^{iZ} P_{0,i}e^{-iZ}$ and consequently define  
$P_{1,e_{k}}: = e^{iZ} P_{e_{k}}e^{-iZ}$. 
Clearly 
$$
\sum_{k \leq r}P_{1,e_{k}} = P_{1,j},
$$
and by Proposition \ref{P:dimension1} $\sfd(P_{e_{k}},P_{1,e_{k}}) \leq 
\sfd(P_{0,i}, P_{1,j})$. 
We therefore define a new transport plan $\bar \nu$ replacing
$\delta_{P_{0,i}}\otimes \delta_{P_{0,j}}$ by 
$$
\frac{1}{r}\sum_{k}\delta_{P_{e_{k}}}\otimes \delta_{P_{1,e_{k}}}.
$$
Then 
$$
\int \sfd(P,Q)^{p} \nu(dPdQ) - \int \sfd(P,Q)^{p} \bar  \nu(dPdQ)
=\beta_{i,j} \left(\sfd(P_{0,i},P_{1,j})^p - \frac{1}{r} \sfd(P_{e_{k}},P_{1,e_{k}})^p \right) \geq 0
$$
It is clear from the construction that the marginal measures of $\nu$ 
are still admissible measures for the states $\varphi$ and $\psi$. 
Repeating the argument at most countably many times proves the claim.
\end{proof}

After Proposition \ref{P:dimension1}, we therefore introduce the 
following additional notation: 
\begin{equation}\label{E:Lambdadim1}
\Lambda_{\varphi}^{\perp,1} : =  \Lambda_{\varphi}^{\perp}\cap \mathcal{P}(\mathsf{P}_{1}).
\end{equation}
Notice that $\Lambda_{\varphi}^{\perp,1}$ is closed, and therefore compact, 
like $\Lambda_{\varphi}^{\perp}$.

Next we prove that the infimum of \eqref{E:C_{p}} can be replaced by a minimum. 

\begin{proposition}\label{P:attained}
Given any $\varphi,\psi \in \mathcal{S}_{n}(\mathsf{B}(\mathsf{H}))$, 
there exist $\mu_{0}, \mu_{1}$ and $\nu$, elements 
of $\Lambda_{\varphi}^{\perp,1}, \Lambda_{\psi}^{\perp,1}$ 
and of $\Pi_{\sfd}\big{(}\mu_{0},\mu_{1}\big{)}$ respectively, such that
$$
\mathcal{C}_{p}(\varphi,\psi) 
= W_{p}^{\mathsf{P}_{c}} \big{(}\mu_{0},\mu_{1}\big{)}
= \left( \int_{\mathsf{P}_{c}\times \mathsf{P}_{c}} \sfd(P,Q)^{p} \, \nu(dPdQ) \right)^{1/p}.
$$
\end{proposition}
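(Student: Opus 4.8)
The plan is to combine three already-established ingredients: the reduction to one-dimensional projections (Proposition \ref{P:dimension1Cost}), the compactness of the admissible index sets (Proposition \ref{P:compactness}), and the existence of optimal plans for fixed marginals (Theorem \ref{T:existence}). First I would use Proposition \ref{P:dimension1Cost} together with the notation \eqref{E:Lambdadim1} to rewrite
$$
\mathcal{C}_{p}(\varphi,\psi) = \inf_{\mu_{0}\in \Lambda_{\varphi}^{\perp,1},\ \mu_{1}\in \Lambda_{\psi}^{\perp,1}} W_{p}^{\mathsf{P}_{c}}(\mu_{0},\mu_{1}).
$$
The advantage of this reduction is that every competitor is now supported in the single connected component $\mathsf{P}_{1}$, where $\tr(P)=1$; thus $\tr(\cdot)\mu_{i}=\mu_{i}$ are genuine probability measures on the complete, separable, bounded metric space $(\mathsf{P}_{1},\sfd)$ (diameter $\leq \pi/2$), the dimension constraint in $\Pi_{\sfd}$ is automatically satisfied, and $W_{p}^{\mathsf{P}_{c}}$ agrees with the classical Wasserstein distance there.

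Next I would take a minimizing sequence $(\mu_{0}^{n},\mu_{1}^{n})$ with $\mu_{0}^{n}\in\Lambda_{\varphi}^{\perp,1}$, $\mu_{1}^{n}\in\Lambda_{\psi}^{\perp,1}$ and $W_{p}^{\mathsf{P}_{c}}(\mu_{0}^{n},\mu_{1}^{n})\to \mathcal{C}_{p}(\varphi,\psi)$. Since $\Lambda_{\varphi}^{\perp,1}$ and $\Lambda_{\psi}^{\perp,1}$ are compact (by Proposition \ref{P:compactness} and the closedness remark following \eqref{E:Lambdadim1}), I can pass to a subsequence along which $\mu_{0}^{n_{k}}\weak \mu_{0}$ and $\mu_{1}^{n_{k}}\weak \mu_{1}$ for some $\mu_{0}\in\Lambda_{\varphi}^{\perp,1}$, $\mu_{1}\in\Lambda_{\psi}^{\perp,1}$.

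The crux is the limit passage in the cost. Because all measures live in $\mathcal{P}(\mathsf{P}_{1})$, the metrization of weak convergence by $W_{p}$ recalled from \cite[Theorem 6.9]{Vil} applies, and the triangle inequality yields
$$
\bigl| W_{p}^{\mathsf{P}_{c}}(\mu_{0}^{n_{k}},\mu_{1}^{n_{k}}) - W_{p}^{\mathsf{P}_{c}}(\mu_{0},\mu_{1}) \bigr| \leq W_{p}^{\mathsf{P}_{c}}(\mu_{0}^{n_{k}},\mu_{0}) + W_{p}^{\mathsf{P}_{c}}(\mu_{1}^{n_{k}},\mu_{1}) \longrightarrow 0.
$$
Hence $W_{p}^{\mathsf{P}_{c}}(\mu_{0},\mu_{1}) = \mathcal{C}_{p}(\varphi,\psi)$, so the infimum is attained at $(\mu_{0},\mu_{1})$. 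If one prefers to avoid the metrization statement, the same conclusion follows from plain lower semicontinuity of the transport cost under weak convergence of the marginals, combined with the definition of the infimum, so only the $\liminf$ inequality is really needed.

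Finally, since $\mu_{0},\mu_{1}\in\mathcal{P}(\mathsf{P}_{1})$ the set $\Pi_{\sfd}(\mu_{0},\mu_{1})$ is nonempty (it contains the product $\mu_{0}\otimes\mu_{1}$), so Theorem \ref{T:existence} supplies an optimal plan $\nu\in\Pi_{\sfd}(\mu_{0},\mu_{1})$ with $W_{p}^{\mathsf{P}_{c}}(\mu_{0},\mu_{1})^{p}=\int \sfd(P,Q)^{p}\,\nu(dPdQ)$, which gives the two displayed identities. I expect the only genuinely delicate point to be the limit passage of the third paragraph; the rest is bookkeeping resting on the compactness and existence results already proved.
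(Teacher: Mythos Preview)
Your proof is correct and follows essentially the same route as the paper: reduce to $\mathsf{P}_{1}$ via Proposition \ref{P:dimension1Cost}, extract convergent subsequences of the marginals using the compactness of $\Lambda_{\varphi}^{\perp,1}$ and $\Lambda_{\psi}^{\perp,1}$ (Proposition \ref{P:compactness}), and invoke Theorem \ref{T:existence} for the optimal plan. The only minor difference is in the limit passage: the paper extracts a weak limit of the optimal plans $\nu_{n}$ directly (tightness from converging marginals) and uses continuity and boundedness of $\sfd$ on $\mathsf{P}_{1}$ to pass the integral to the limit, whereas you use the metrization of weak convergence by $W_{p}$ together with the triangle inequality; both arguments rest on the same boundedness of $\sfd$ on $\mathsf{P}_{1}$ and are equivalent here.
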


\begin{proof} 
The second identity is proved in Theorem \ref{T:existence}. It is enough therefore 
to show the first one.
By Proposition \ref{P:dimension1}, 
there exists two sequences 
$(\mu_{0,n})_{n\in \N}\subset \Lambda_{\varphi}^{\perp,1}, 
(\mu_{1,n})_{n\in \N}\subset \Lambda_{\psi}^{\perp,1}$ such that 
$$
\lim_{n\to\infty} W_{p}^{\mathsf{P}_{c}} \big{(}\mu_{0,n},\mu_{1,n}\big{)} = 
\mathcal{C}_{p}(\varphi,\psi).
$$
By compactness of 
$\tr(\cdot)\Lambda_{\varphi}^{\perp}$ and 
$\tr(\cdot)\Lambda_{\psi}^{\perp}$ 
(Proposition \ref{P:compactness}), 
we assume, up to subsequences that we omit, 
$\mu_{0,n} \weak \mu_{0},\
\mu_{1,n}\weak \mu_{1}$, for some 
$\mu_{0} \in \Lambda_{\varphi}^{\perp,1}$ and 
$\mu_{1} \in \Lambda_{\psi}^{\perp,1}$.

Now take $\nu_{n} \in \Pi_{\sfd}(\mu_{0,n}, \mu_{1,n})$
any optimal transport plan (Theorem \ref{T:existence}). Since its marginal are converging, by tightness,  
$\nu_{n}$ is weakly converging, up to subsequences, as well to a certain 
$\nu \in \Pi(\mu_{0},\mu_{1})$. 
Since $\sfd$ is continuous and bounded on $\mathsf{P}_{1}$:
$$
\mathcal{C}_{p}(\varphi,\psi) = \lim_{n\to\infty}W_{p}^{\mathsf{P}_{c}}(\mu_{0,n}, \mu_{1,n})^{p} 
= \int_{\mathsf{P}_{c} \times \mathsf{P}_{c}} \sfd(P,Q)^{p}\,\nu(dPdQ) 
\geq 
W_{p}^{\mathsf{P}_{c}}(\mu_{0},\mu_{1})^{p}.  
$$
Continuing the previous chain of inequalities with 
$\geq \mathcal{C}_{p}(\varphi,\psi)$ proves the claim.\end{proof}

Hence we have shown that the optimal transport problem defining 
the transport cost has always a solution.
Moreover, by the symmetry of $\sfd$, it is trivial to check that 
$\mathcal{C}_{p}(\varphi,\psi) = \mathcal{C}_{p}(\psi,\varphi)$.

Finally 
$\mathcal{C}_{p}(\varphi,\psi) = 0$ implies $\varphi = \psi$. 
Indeed by the previous Proposition \ref{P:attained}, there exist 
$\mu_{0} \in \Lambda_{\varphi}^{\perp,1}$ 
and $\mu_{1} \in \Lambda_{\psi}^{\perp,1}$ 
such that 
$W_{p}^{\mathsf{P}_{c}}\big{(}\mu_{0},\mu_{1}\big{)} = 0$. 
Hence $\mu_{0} = \mu_{1}$ and by definition 
of $\Lambda_{\varphi}^{\perp}$ and 
$\Lambda_{\psi}^{\perp}$
$$
\varphi = \Psi(\mu_{0}) = \Psi(\mu_{1}) = \psi.
$$
We have therefore that 
$\mathcal{C}_{p} : 
\mathcal{S}_{n}(\mathsf{B(H)}) \times \mathcal{S}_{n}(\mathsf{B(H)}) \longrightarrow [0,\infty)$ is a semi-distance.  
First references for semi-distances date back 
to the first half of 20th century, see for instance \cite{Wilson}. We refer however to the recent 
\cite{CJT} for a general overview on the topic.

\begin{remark}
Concerning triangular inequality for the cost $\mathcal{C}_{p}$,
using Remark \ref{R:distancecompact}, 
one can deduce the following property: 
given $\varphi,\psi,\phi \in \mathcal{S}_{n}(\mathsf{B(H)})$
$$
\mathcal{C}_{p}(\varphi,\psi) 
\leq 
\inf_{\mu_{0},\, \mu_{1}, \mu_{2}}
\Big{\{} 
W_{p}^{\mathsf{P}_{c}}\big{(}\mu_{0}, \mu_{1}\big{)} + 
W_{p}^{\mathsf{P}_{c}}\big{(}\mu_{1}, \mu_{2}\big{)} 
\Big{\}},
$$
infimum with respect to 
$
\mu_{0} \in \Lambda_{\varphi}^{\perp,1}, 
\mu_{1} \in \Lambda_{\phi}^{\perp,1}$ and $
\mu_{2} \in \Lambda_{\psi}^{\perp,1}.$

We do not present a proof of the previous inequality 
because it follows a classical 
argument (gluing) in optimal transport that will be also used in 
the proof of the following Lemma \ref{L:triangular}.
Moreover, whenever the intermediate normal state, say $\phi$, has density matrix with only simple eigenvalues 
(so that there is only one element in $\Lambda_{\phi}^{\perp,1}$), 
then again by gluing one obtains the triangular inequality: 
$$
\mathcal{C}_{p}(\varphi,\psi) \leq \mathcal{C}_{p}(\varphi,\phi)  + 
\mathcal{C}_{p}(\phi,\psi) .
$$
The proof of Lemma \ref{L:triangular} will clarify this point.
\end{remark}

We now investigate the topology induced by $\mathcal{C}_{p}$, 
starting by its converging sequences.
Notice indeed that semi-distances induce a topology whose open sets are in the form $U \subset \mathcal{S}_{n}(\mathsf{B(H)})$ for which for every $\varphi \in U$ there exists 
$r>0$ so that $B_{r}(\varphi) : = \big{\{} \psi \in \mathcal{S}_{n}(\mathsf{B(H)})\colon \mathcal{C}_{p}(\varphi,\psi) < r\big{\}} \subset U$.

\begin{theorem}\label{T:equivalentconvergence}
Let $\varphi_{n},\varphi$  be normal states of 
$\mathsf{B}(\mathsf{H})$. 
Then 
$$
\mathcal{C}_{p}(\varphi_{n},\varphi) \longrightarrow 0 \quad \iff
\quad 
\xymatrix@1{{\varphi}_{n}\ar@^{>}[r]^{w^*}&{\,}\varphi}.
$$ 
\end{theorem}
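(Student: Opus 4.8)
The overall strategy is to transfer the question entirely onto the single connected component $\mathsf{P}_1 = \mathbb{P}(\mathsf{H})$, where the cost is computed by Proposition \ref{P:dimension1Cost} and where, crucially, $(\mathsf{P}_1,\sfd)$ is a complete separable metric space of diameter at most $\pi/2$; on such a space weak convergence of probability measures is equivalent to $W_p^{\mathsf{P}_c}$-convergence by \cite[Theorem 6.9]{Vil}. Since $\mathsf{H}$ is separable, $\mathbb{K}$ is separable and the $w^*$-topology on $\mathcal{S}_{n}(\mathsf{B(H)}) = \mathcal{S}(\mathbb{K})$ is metrizable; hence in both implications it suffices to argue along subsequences and invoke the standard principle that a sequence converges as soon as every subsequence admits a further subsequence converging to the same limit.

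For the implication $\Leftarrow$, assume $\varphi_{n} \weak \varphi$ and suppose, for contradiction, that $\mathcal{C}_{p}(\varphi_{n},\varphi)$ does not tend to $0$; passing to a subsequence we may assume $\mathcal{C}_{p}(\varphi_{n},\varphi) \geq \delta > 0$. Pick any $\mu_{n} \in \Lambda_{\varphi_{n}}^{\perp,1}$ (nonempty, as observed above). By Proposition \ref{P:weakweak} there is a subsequence $n_{k}$ and $\mu \in \Lambda_{\varphi}^{\perp}$ with $\tr(\cdot)\mu_{n_{k}} \weak \tr(\cdot)\mu$; since each projection in $\mu_{n}$ is rank one and converges in $\mathcal{L}^{1}$ to the corresponding projection of $\mu$ by Lemma \ref{L:Pconverge}, continuity of the trace forces each limit projection to have trace $1$, so in fact $\mu \in \Lambda_{\varphi}^{\perp,1}$ and all measures involved are supported in $\mathsf{P}_{1}$. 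Applying \cite[Theorem 6.9]{Vil} on $\mathsf{P}_{1}$ turns this weak convergence into $W_{p}^{\mathsf{P}_{c}}(\tr(\cdot)\mu_{n_{k}},\tr(\cdot)\mu) \to 0$, and since $\mu_{n_{k}} \in \Lambda_{\varphi_{n_{k}}}^{\perp}$ and $\mu \in \Lambda_{\varphi}^{\perp}$ the definition of $\mathcal{C}_p$ gives
$$
\mathcal{C}_{p}(\varphi_{n_{k}},\varphi) \leq W_{p}^{\mathsf{P}_{c}}\big(\tr(\cdot)\mu_{n_{k}},\tr(\cdot)\mu\big) \longrightarrow 0,
$$
contradicting $\mathcal{C}_{p}(\varphi_{n_{k}},\varphi) \geq \delta$.

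For the implication $\Rightarrow$, assume $\mathcal{C}_{p}(\varphi_{n},\varphi) \to 0$; by metrizability it suffices to extract, from an arbitrary subsequence, a further subsequence along which $\varphi_{n} \weak \varphi$. By Proposition \ref{P:attained} there are optimizers $\mu_{0,n} \in \Lambda_{\varphi_{n}}^{\perp,1}$ and $\mu_{1,n} \in \Lambda_{\varphi}^{\perp,1}$ with $W_{p}^{\mathsf{P}_{c}}(\mu_{0,n},\mu_{1,n}) = \mathcal{C}_{p}(\varphi_{n},\varphi) \to 0$. Since $\tr(\cdot)\Lambda_{\varphi}^{\perp,1}$ is compact (Proposition \ref{P:compactness}), along a further subsequence $\tr(\cdot)\mu_{1,n} \weak \tr(\cdot)\mu_{1}$ for some $\mu_{1} \in \Lambda_{\varphi}^{\perp,1}$, hence $W_{p}^{\mathsf{P}_{c}}(\mu_{1,n},\mu_{1}) \to 0$ by \cite[Theorem 6.9]{Vil}. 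As $W_{p}^{\mathsf{P}_{c}}$ is a genuine metric on $\mathcal{P}(\mathsf{P}_{1})$, the triangle inequality yields $W_{p}^{\mathsf{P}_{c}}(\mu_{0,n},\mu_{1}) \to 0$, so $\tr(\cdot)\mu_{0,n} \weak \tr(\cdot)\mu_{1}$. Finally, since $\Psi(\mu_{0,n}) = \varphi_{n}$ and $\Psi(\mu_{1}) = \varphi$, Theorem \ref{T:summary}(1) (equivalently Lemma \ref{L:continuityPsi}) gives $\varphi_{n} \weak \varphi$ along this sub-subsequence, completing the argument.

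The main obstacle is organizational rather than computational: the metric $\sfd$ is finite only on each connected component, so one cannot directly appeal to the weak--Wasserstein equivalence for the full spectral measures, whose supports a priori spread over infinitely many components $\mathsf{P}_{r}$. The device that resolves this is the reduction to rank-one representations (Proposition \ref{P:dimension1Cost}), together with the verification that weak/$\mathcal{L}^{1}$ limits of rank-one projections remain rank one, which confines all relevant measures to the single Polish component $\mathsf{P}_{1}$ on which \cite[Theorem 6.9]{Vil} applies; the remaining subsequence bookkeeping is then routine given the $w^{*}$-metrizability of the state space.
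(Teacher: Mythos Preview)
Your proof is correct and follows essentially the same route as the paper: both directions rest on Proposition~\ref{P:attained}, Proposition~\ref{P:compactness}, Proposition~\ref{P:weakweak}, Lemma~\ref{L:continuityPsi}, and the equivalence of weak and $W_p$ convergence on the bounded Polish space $\mathsf{P}_1$. Your write-up is in fact slightly more careful than the paper's in two places: you make the subsequence-of-subsequence bookkeeping explicit (via contradiction in one direction, via the standard metrizability principle in the other), and you justify that the limit measure produced by Proposition~\ref{P:weakweak} still lies in $\Lambda_{\varphi}^{\perp,1}$ (rank-one projections have $\mathcal{L}^1$-limits of trace one), whereas the paper simply asserts this.
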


\begin{proof}
Suppose first that
$\mathcal{C}_{p}(\varphi_{n}, \varphi ) \to 0$ as $n \to \infty$.
By Proposition \ref{P:attained} 
there exist $\mu_{0,n}\in \Lambda_{\varphi_{n}}^{\perp,1}$, 
$\mu_{1,n} \in \Lambda_{\varphi}^{\perp,1}$ and 
$\nu_{n} \in \Pi_{\sfd}(\mu_{0,n},\mu_{1})$ such that
$$
\mathcal{C}_{p}(\varphi_{n},\varphi) 
= W_{p}^{\mathsf{P}_{c}} 
(\mu_{0,n}, \mu_{1,n})
= \left( \int_{\mathsf{P}_{c}\times \mathsf{P}_{c}} \sfd(P,Q)^{p} \, \nu_{n}(dPdQ) \right)^{1/p} \to 0
$$
By compactness of $\Lambda_{\varphi}^{\perp,1}$ 
in weak topology, $\mu_{1,n}$ has a converging subsequence to 
some $\mu_{1} \in \Lambda_{\varphi}^{\perp,1}$.
By Lemma \ref{L:continuityPsi}, 
$\Psi(\mu_{0,n}) \to \Psi(\mu_{1})$ in $w^{*}$-convergence.
By definition $\Psi(\mu_{0,n}) = \varphi_{n}$ and 
$\Psi(\mu_{1}) = \varphi$ giving the first claim.

Assume $\varphi_{n} \weak \varphi$ now.
By Proposition \ref{P:attained} 
there exist $\mu_{0,n}\in \Lambda_{\varphi_{n}}^{\perp,1}$, 
$\mu_{1,n} \in \Lambda_{\varphi}^{\perp,1}$ and 
$\nu_{n} \in \Pi_{\sfd}(\mu_{0,n},\mu_{1,n})$ such that
$$
\mathcal{C}_{p}\big{(}\varphi_{n},\varphi) 
= W_{p}^{\mathsf{P}_{c}} (\mu_{0,n},\mu_{1,n}\big{)}.
$$
We now invoke Proposition 
\ref{P:weakweak}: from $\varphi_{n} \weak \varphi$ we deduce the 
existence of
a subsequence $\mu_{0,n_{k}}$ and  
$\mu_{1} \in \Lambda_{\varphi}^{\perp,1}$ such that 
$\mu_{0,n_{k}} \weak  \mu_{1}$.
Then 
$\mu_{0,n_{k}} \to \mu_{1}$ also in 
Wasserstein distance over $\mathsf{P}_{1}$.
Hence 
$$
\mathcal{C}_{p}(\varphi_{n},\varphi) 
= W_{p}^{\mathsf{P}_{c}} (\mu_{0,n},\mu_{1,n})
\leq W_{p}^{\mathsf{P}_{c}}(\mu_{0,n_{k}}, \mu_{1}) \to 0,
$$
giving the claim.
\end{proof}

Theorem \ref{T:equivalentconvergence} together with 
 \cite[Theorem 4.2]{CJT} imply following 

\begin{corollary}
The topology $\tau_{\mathcal{C}_{p}}$ over the set of 
normal states coincide 
with the $w^{*}$-topology.
\end{corollary}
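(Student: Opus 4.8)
The plan is to upgrade the sequential equivalence established in Theorem~\ref{T:equivalentconvergence} to a genuine identity of topologies. Both topologies at play live on the same underlying set $\mathcal{S}_{n}(\mathsf{B(H)})$: the topology $\tau_{\mathcal{C}_{p}}$ generated by the semi-distance $\mathcal{C}_{p}$ (whose open sets were described just before Theorem~\ref{T:equivalentconvergence}) and the $w^{*}$-topology. First I would record the decisive structural fact that the $w^{*}$-topology, when restricted to the states, is metrizable: by Robinson's Theorem~\ref{T:convergenceweakL1} it coincides with the trace-norm topology, so it is in particular first-countable and hence sequential, i.e.\ completely determined by its convergent sequences.

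With this in hand the argument becomes short. Recall from the discussion above that $\mathcal{C}_{p}$ is a bona fide semi-distance: it is symmetric, bounded by $\pi/2$, and satisfies $\mathcal{C}_{p}(\varphi,\psi)=0\Rightarrow\varphi=\psi$. Theorem~\ref{T:equivalentconvergence} asserts exactly that a sequence converges in the $\mathcal{C}_{p}$-sense, i.e.\ $\mathcal{C}_{p}(\varphi_{n},\varphi)\to 0$, if and only if it converges in $w^{*}$; that is, the two topologies possess the same convergent sequences. I would then invoke \cite[Theorem~4.2]{CJT}, the general criterion for topologies generated by semi-distances, which guarantees that such a topology agrees with a given metrizable (hence sequential) topology precisely when the two share the same convergent sequences. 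Applying this criterion with the metrizable $w^{*}$-topology yields $\tau_{\mathcal{C}_{p}}=\tau_{w^{*}}$, which is the assertion.

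The delicate point --- and the reason Theorem~\ref{T:equivalentconvergence} alone does not suffice --- is that $\mathcal{C}_{p}$ is only a \emph{semi}-distance: the triangle inequality may fail, so the $\mathcal{C}_{p}$-balls need not be open and $\tau_{\mathcal{C}_{p}}$ is not, a priori, a metric topology. Consequently one cannot argue directly, as in the classical metric case, that coincidence of convergent sequences forces coincidence of the topologies, nor can one take for granted that topological convergence in $\tau_{\mathcal{C}_{p}}$ reduces to $\mathcal{C}_{p}(\varphi_{n},\varphi)\to 0$. Bridging precisely this gap --- showing that a semi-distance topology remains governed by its convergent sequences when matched against a sequential topology --- is the content of \cite[Theorem~4.2]{CJT}, and it is the only non-formal ingredient the proof relies on. Once it is granted, the equality of topologies follows immediately from Theorem~\ref{T:equivalentconvergence}.
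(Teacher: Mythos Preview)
Your proposal is correct and follows essentially the same route as the paper: both invoke \cite[Theorem~4.2]{CJT} to ensure that the semi-distance topology $\tau_{\mathcal{C}_{p}}$ is determined by its convergent sequences, use metrizability of the $w^{*}$-topology on the states, and then conclude via Theorem~\ref{T:equivalentconvergence}. Your phrasing of what \cite[Theorem~4.2]{CJT} delivers is slightly different (you package it as a comparison criterion, whereas the paper states it as $\tau_{\mathcal{C}_{p}}$ coinciding with its own sequential closure), but the logical content and the ingredients used are identical.
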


\begin{proof}
\cite[Theorem 4.2]{CJT} states that 
$\tau_{\mathcal{C}_{p}}$ coincide with the topology 
induced by sequences in the usual sense: 
$U$ is closed if contains limit points (w.r.t. to $\mathcal{C}_{p}$) of all converging sequences all contained inside $U$. 
Then Theorem \ref{T:equivalentconvergence} and metrizability of $w^{*}$-topology over bounded set
proves the claim.
\end{proof}

%---------------------------------------------------------------
%---------------------------------------------------------------
\subsection{Wasserstein distance in $\mathcal{S}_{n}(\mathsf{B}(\mathsf{H}))$}

Even though the cost functional $\mathcal{C}_{p}$ 
is fully satisfactory (see Theorem \ref{T:equivalentconvergence}), for completeness we address the issue of 
the lack of triangular inequality for 
$\mathcal{C}_{p}$. 
Using the spectral decomposition without repetitions of 
eigenvalues permits to obtain the triangular inequality. 
As a drawback this produces an extended distance (not finite).

\begin{definition}\label{D:Wasserstein-distance}
For any $\varphi,\psi \in \mathcal{S}_{n}(\mathsf{B(H)})$ and $p\geq 1$ define their $p$-Wasserstein distance by
\begin{equation}\label{E:W_{p}}
W_{p}(\varphi,\psi) 
: = 
W_{p}^{\mathsf{P}_{c}}
(\tr(\cdot)\Phi(\varphi),\tr(\cdot)\Phi(\psi)),
\end{equation}
with the map $\Phi$ defined in \eqref{E:Phistates}.
Recall that by Definition \ref{D:Wassersteindistance}, 
if no admissible transport plans exist, we assign to 
$W_{p}(\varphi,\psi)$ the value $+\infty$.
\end{definition}

We will now prove indeed that the map
$$
W_{p} : \mathcal{S}_{n}(\mathsf{B}(\mathsf{H})) \times  
\mathcal{S}_{n}(\mathsf{B}(\mathsf{H})) \longrightarrow [0,\infty]
$$
defines an extended distance over $\mathcal{S}_{n}(\mathsf{B}(\mathsf{H}))$. 
As before, the symmetry of $\sfd$ implies the symmetry of $W_{p}$
and if
$W_{p}(\varphi,\psi) = 0$, it is straightforward to check that 
$\varphi = \psi$. The triangular inequality 
is the content of the following

\begin{lemma}(Triangular inequality for $W_p$).\label{L:triangular}
Let $\varphi, \psi$ and $\phi$ be three elements of 
$\mathcal{S}_{n}(\mathsf{B}(\mathsf{H}))$. 
Then 
$$
W_{p}(\phi, \varphi) \leq W_{p}(\phi, \psi)+ W_{p}(\psi, \varphi).
$$
\end{lemma}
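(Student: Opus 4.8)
The plan is to run the classical gluing argument for Wasserstein distances, taking care that the dimension-preserving constraint defining $\Pi_{\sfd}$ is respected. First I would dispose of the trivial case: if either $W_{p}(\phi,\psi)$ or $W_{p}(\psi,\varphi)$ equals $+\infty$, the inequality holds automatically, so I assume both are finite. Writing $\mu_{0}:=\tr(\cdot)\Phi(\phi)$, $\mu_{1}:=\tr(\cdot)\Phi(\psi)$ and $\mu_{2}:=\tr(\cdot)\Phi(\varphi)$ for the three probability measures on $\mathsf{P}_{c}$, finiteness means precisely that $\Pi_{\sfd}(\mu_{0},\mu_{1})$ and $\Pi_{\sfd}(\mu_{1},\mu_{2})$ are nonempty. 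By Theorem \ref{T:existence} I may then select optimal plans $\nu_{01}\in\Pi_{\sfd}(\mu_{0},\mu_{1})$ and $\nu_{12}\in\Pi_{\sfd}(\mu_{1},\mu_{2})$ realizing $W_{p}(\phi,\psi)^{p}$ and $W_{p}(\psi,\varphi)^{p}$ respectively.

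Next I would glue these two plans along their common marginal $\mu_{1}$. Since $\mathsf{P}_{c}$ is Polish, I disintegrate $\nu_{01}$ with respect to its second marginal and $\nu_{12}$ with respect to its first marginal, and form the measure $\gamma\in\mathcal{P}(\mathsf{P}_{c}\times\mathsf{P}_{c}\times\mathsf{P}_{c})$ whose marginal on the first two coordinates $(P_{0},P_{1})$ is $\nu_{01}$ and whose marginal on the last two coordinates $(P_{1},P_{2})$ is $\nu_{12}$. Let $\gamma_{02}$ denote the push-forward of $\gamma$ under the projection onto the first and third coordinates $(P_{0},P_{2})$, which by construction lies in $\Pi(\mu_{0},\mu_{2})$.

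The point that deserves attention, and that makes this more than a verbatim copy of the commutative proof, is checking that $\gamma_{02}$ is an admissible $\sfd$-plan, i.e. that it belongs to $\Pi_{\sfd}(\mu_{0},\mu_{2})$. This follows transitively: because $\nu_{01}$ is a $\sfd$-plan one has $\dim R(P_{0})=\dim R(P_{1})$ for $\gamma$-a.e. triple $(P_{0},P_{1},P_{2})$, and because $\nu_{12}$ is a $\sfd$-plan one has $\dim R(P_{1})=\dim R(P_{2})$ for $\gamma$-a.e. triple; hence $\dim R(P_{0})=\dim R(P_{2})$ $\gamma$-a.e., which is exactly the defining condition \eqref{E:FPi}. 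In particular $\gamma$-almost every triple lies in a single connected component of $\mathsf{P}_{c}$, where $\sfd$ is a genuine (finite) metric, so the pointwise triangle inequality $\sfd(P_{0},P_{2})\le\sfd(P_{0},P_{1})+\sfd(P_{1},P_{2})$ holds $\gamma$-a.e.

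Finally I would conclude by estimating. Using $\gamma_{02}\in\Pi_{\sfd}(\mu_{0},\mu_{2})$ together with the pointwise triangle inequality and the Minkowski inequality in $L^{p}(\gamma)$,
\[
W_{p}(\phi,\varphi)=W_{p}^{\mathsf{P}_{c}}(\mu_{0},\mu_{2})\le\Big(\int\sfd(P_{0},P_{2})^{p}\,d\gamma\Big)^{1/p}\le\Big(\int\sfd(P_{0},P_{1})^{p}\,d\gamma\Big)^{1/p}+\Big(\int\sfd(P_{1},P_{2})^{p}\,d\gamma\Big)^{1/p}.
\]
Since the marginals of $\gamma$ on the first two and on the last two coordinates are $\nu_{01}$ and $\nu_{12}$, the two integrals on the right equal $\int\sfd^{p}\,d\nu_{01}=W_{p}(\phi,\psi)^{p}$ and $\int\sfd^{p}\,d\nu_{12}=W_{p}(\psi,\varphi)^{p}$, yielding the desired inequality. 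The only genuine obstacle is the admissibility check for the glued plan; everything else is the standard Polish-space gluing lemma combined with Minkowski's inequality.
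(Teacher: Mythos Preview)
Your proof is correct and follows essentially the same gluing-plus-Minkowski argument as the paper. The only cosmetic differences are that the paper writes out the glued measure explicitly using the discrete structure of the spectral measures (rather than invoking abstract disintegration) and verifies admissibility via additivity of the Fredholm index of pairs of projections, whereas your direct transitivity of $\dim R(P)$ is simpler and entirely sufficient here.
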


\begin{proof}
Consider 
$\nu_{1} \in \Pi_{\sfd}\big{(}\tr(\cdot)\, \mu_{\phi},\tr(\cdot)\, \mu_{\psi}\big{)}$ and 
$\nu_{2} \in \Pi_{\sfd}\big{(}\tr(\cdot)\,\mu_{\psi},\tr(\cdot)\,\mu_{\varphi}\big{)}$ 
optimal plan whose existence is assured by Theorem \ref{T:existence}.

If $\tr(\cdot)\, \mu_{\phi} = \sum_{i}\alpha_{i} \delta_{P_{V_{i}}}$, 
$\tr(\cdot)\, \mu_{\psi} = \sum_{i}\beta_{i} \delta_{P_{W_{i}}}$ and 
$\tr(\cdot)\,\mu_{\varphi} = \sum_{i}\gamma_{i} \delta_{P_{Z_{i}}}$
then the transport plans $\nu_{1}$ and $\nu_{2}$ can be written as
$$
\nu_{1} = \sum_{i,j} \alpha^{1}_{i,j} \delta_{P_{V_{i}}}\otimes \delta_{P_{W_{j}}},
\quad
\nu_{2} = \sum_{i,j} \alpha^{2}_{i,j} \delta_{P_{W_{i}}}\otimes \delta_{P_{Z_{j}}},
$$
with $\alpha^{1}_{i,j},\alpha^{2}_{i,j}\geq 0$ and
$\sum_{i,j} \alpha^{1}_{i,j} = \sum_{i,j} \alpha^{2}_{i,j} =1$; moreover 
marginal constraint are given in the following form
$$
\alpha_{i} = \sum_{j} \alpha^{1}_{i,j}, \quad   
\sum_{i} \alpha^{1}_{i,j} = \beta_{j} = \sum_{i}\alpha^{2}_{j,i}, \quad 
\gamma_{j} = \sum_{i} \alpha^{2}_{i,j}.
$$
Following the classical gluing procedure of transport plans, we define
$$
\Theta : = \sum_{i,j,k} \frac{\alpha^{1}_{i,j}\alpha^{2}_{j,k}}{\beta_{j}} \delta_{P_{V_{i}}}\otimes \delta_{P_{W_{j}}}\otimes \delta_{P_{Z_{k}}},
$$
and one can check that
$\nu_{1} = (\pi_{12})_{\sharp} \Theta$, $\nu_{2} = (\pi_{23})_{\sharp}\Theta$   
and $(\pi_{13})_{\sharp}\Theta \in \Pi(\tr(\cdot)\, \mu_{\phi},\tr(\cdot)\,\mu_{\varphi})$.

We also need to check that 
$(\pi_{13})_{\sharp}\Theta$ is admmissible: 
for $(\pi_{13})_{\sharp}\Theta$-a.e. $P,Q$ it holds 
$i(P,Q) = 0$ (or $\dim(R(P)) = \dim(R(Q)))$. Moreover from \cite{ASS} 
if $(P,Q)$ and $(Q, V)$ are Fredholm pairs, 
and either $Q - V$ or $P-Q$ is compact, then $(P, V)$ is a Fredholm pair and
$$
i(P,Q) = i(P,V) + i(V,Q).
$$
For $\Theta$-a.e. $(P,V,Q) \in \mathsf{P}_{c} \times \mathsf{P}_{c}\times \mathsf{P}_{c}$, 
we have that
$$
i(P,Q) = i(P,V) + i(V,Q)=0, \qquad \Theta-a.e. 
$$
showing that $(\pi_{13})_{\sharp}\Theta \in 
\Pi_{\sfd}(\tr(\cdot)\,\mu_{\phi},\tr(\cdot)\,\mu_{\varphi})$. 
For the same reason, $\Theta$-a.e. the projections $P,Q$ and $V$ belong to the same 
connected component of $\mathsf{P}_{c}$ where 
triangular inequalities can be used.
Hence for any $p\geq 1$: 
\begin{align*}
W_{p}(\phi,\varphi) 
&~ \leq \left(\int \sfd(P,Q)^{p} \, (\pi_{13})_{\sharp}\Theta(dPdQ)\right)^{1/p} \\
&~ = \left(\int \sfd(P,Q)^{p} \, \Theta(dPdVdQ)\right)^{1/p} \\
&~ \leq \left(\int \left(\sfd(P,V) + \sfd(V,Q)\right)^{p} \, \Theta(dPdVdQ)\right)^{1/p} \\
&~ \leq \left(\int \sfd(P,V)^{p} \Theta(dPdVdQ)\right)^{1/p}
+ \left(\int \sfd(V,Q)^{p} \, \Theta(dPdVdQ)\right)^{1/p} \\
&~ = W_{p}(\phi,\psi) + W_{p}(\psi,\varphi),
\end{align*}
concluding the proof.
\end{proof}

\begin{corollary}
The couple $(\mathcal{S}_{n}(\mathsf{B}(\mathsf{H})), W_{p})$ is 
an extended metric space in the sense that   
$$
W_{p} : \mathcal{S}_{n}(\mathsf{B}(\mathsf{H})) \times \mathcal{S}_{n}(\mathsf{B}(\mathsf{H})) \to [0,\infty]
$$ 
verifies  for any $\varphi$ and $\psi$ the following 
properties:
$W_{p}(\varphi,\varphi) = 0$, and 
if $W_{p}(\varphi,\psi) = 0$ then $\varphi = \psi$; 
$W_{p}(\varphi,\psi) = W_{p}(\psi,\varphi)$ and the triangular inequality holds true.
\end{corollary}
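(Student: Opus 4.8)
The plan is to verify the four defining properties of an extended distance. Symmetry and the triangle inequality have essentially already been settled — the former in the discussion immediately preceding the statement, the latter as the content of Lemma \ref{L:triangular} — so the only genuinely new work concerns reflexivity $W_p(\varphi,\varphi)=0$ together with the separation property, and the argument will amount largely to assembling facts already in hand.

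First I would check that $W_p(\varphi,\varphi)=0$. Unwinding Definition \ref{D:Wasserstein-distance}, this quantity equals $W_p^{\mathsf{P}_c}(\tr(\cdot)\Phi(\varphi),\tr(\cdot)\Phi(\varphi))$. The diagonal coupling, i.e.\ the push-forward of $\tr(\cdot)\Phi(\varphi)$ under the map $P\mapsto (P,P)$, is admissible since it is concentrated on pairs lying (trivially) in the same connected component of $\mathsf{P}_c$; its transport cost is $\int\sfd(P,P)^p\,\tr(\cdot)\Phi(\varphi)(dP)=0$, so the infimum vanishes and is in particular finite.

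Next, for the separation property I would argue as follows. If $W_p(\varphi,\psi)=0$ then, the value being finite, admissible plans exist, and by Theorem \ref{T:existence} there is an optimal $\nu\in\Pi_{\sfd}(\tr(\cdot)\Phi(\varphi),\tr(\cdot)\Phi(\psi))$ with $\int\sfd(P,Q)^p\,\nu(dPdQ)=0$. Since $\sfd\geq 0$ vanishes only on the diagonal, $\nu$ is concentrated on $\{P=Q\}$, whence its two marginals coincide: $\tr(\cdot)\Phi(\varphi)=\tr(\cdot)\Phi(\psi)$. Both of these are atomic measures, with atoms exactly at the eigenprojections and weights $\lambda_i\,\tr(P_{V_i})$; comparing supports identifies the projections, and dividing each weight by the projection-determined integer $\tr(P_{V_i})$ recovers the eigenvalues, so that $\Phi(\varphi)=\Phi(\psi)$. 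Applying $\Psi$ and invoking the identity $\Psi\circ\Phi=\mathrm{id}$ (noted after Definition \ref{D:mapPsi}) gives $\rho_\varphi=\rho_\psi$, that is $\varphi=\psi$.

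Finally, symmetry $W_p(\varphi,\psi)=W_p(\psi,\varphi)$ is immediate from the symmetry of $\sfd$: exchanging the two factors of $\mathsf{P}_c\times\mathsf{P}_c$ turns any admissible plan into an admissible plan of identical cost, so the two infima agree. The triangle inequality is precisely Lemma \ref{L:triangular}. Collecting these four properties yields the claim. I do not anticipate any real obstacle here: the substantive analytical content — compactness of $\tr(\cdot)\Lambda_{\varphi}^{\perp}$ (Proposition \ref{P:compactness}), existence of optimal plans (Theorem \ref{T:existence}), and above all the additivity of the Fredholm index underlying the gluing in Lemma \ref{L:triangular} — has already been carried out, so this final step is organizational rather than technical.
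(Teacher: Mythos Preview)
Your proposal is correct and matches the paper's approach, which in fact does not supply a formal proof of the corollary at all: the paper records symmetry (from that of $\sfd$) and the separation property (declared ``straightforward'') in the paragraph preceding Lemma~\ref{L:triangular}, invokes that lemma for the triangle inequality, and then simply states the corollary. Your write-up fleshes out exactly these ingredients, with a bit more care on the separation argument than the paper itself provides.
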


By definition is straightforward to check that 
$$
\mathcal{C}_{p}(\varphi,\psi) \leq W_{p}(\varphi,\psi).
$$
In particular $W_{p}$-convergence implies 
$\mathcal{C}_{p}$-convergence and, by Theorem 
\ref{T:equivalentconvergence},
$w^{*}$-convergence.
However, as expected  $w^{*}$-convergence does not imply 
$W_{p}$-convergence.
We have a simple counterexample.
\begin{example}
Consider the case of $\mathsf{H} = \C^{2}$
and 
$$
\varphi_{n} : = \left(\frac{1}{2} - \frac{1}{n}\right)| e_{1} \rangle \langle e_{1} | 
+\left(\frac{1}{2} + \frac{1}{n}\right)| e_{2} \rangle \langle e_{2} | 
\weak \varphi := \frac{1}{2} \left(|e_{1} \rangle \langle e_{1} | + 
|e_{2} \rangle \langle e_{2} | \right);
$$ 
the corresponding measures over the space of projections of $\C^{2}$ will be 
$$
\mu_{\varphi_{n}} = \left(\frac{1}{2} - \frac{1}{n}\right) \delta_{P_{1}}
+ \left(\frac{1}{2} + \frac{1}{n}\right) \delta_{P_{2}}, 
\qquad 
\mu_{\varphi} = \delta_{\operatorname{Id}},
$$
where $P_{1}$ and $P_{2}$ are the projections over the span of $e_{1}$ and 
$e_{2}$, respectively. Since $P_{1},P_{2}$ and $\operatorname{Id}$ belong to 
two different connected components of $\mathsf{P}_{c}$,
$W_{p}\big{(}\tr(\cdot)\mu_{\varphi_{n}},\tr(\cdot)\mu_{\varphi}\big{)}= \infty$.
\end{example}

\smallskip
%-------------------------------------------------------------------------------------%-------------------------------------------------------------------------------------

\section{Kantorovich duality for $W_{p}$ and consequences}\label{S:Dual}

In this part we will go through the Kantorovich duality for the optimal transport problem over 
$(\mathsf{P}_{c},\sfd)$. In particular we will 
analyse cyclically montone sets and solutions of the dual problem. 
The duality will always be referred to the 
Wasserstein distance $W_{p}$.

\smallskip
%%%%%%%%%%%%%%%%%%%%%%%%%%%%%%%%%%%%%%%%%%%%%%%%%%%%%%%%%%%%%%%%%%%%%%%%
%%%%%%%%%%%%%%%%%%%%%%%%%%%%%%%%%%%%%%%%%%%%%%%%%%%%%%%%%%%%%%%%%%%%%%%%

\subsection{Kantorovich duality}\label{Ss:Kantor}

As before, when dealing with optimal transport arguments, we will repeatedly restrict $\sfd$ to each 
connected component $\mathsf{P}_{n}$ of $\mathsf{P}_{c}$ and invoke the classical results.
We start recalling the following classical definition from the theory of optimal transport:
A subset $\Gamma$ of $\mathsf{P}_{c}\times \mathsf{P}_{c}$ is 
$\sfd^{p}$-cyclically monotone if and only if for any $n \in \N$
and $(P_{1},Q_{1}),\dots (P_{n},Q_{n}) \in \Gamma$ the following inequality is valid
$$
\sum_{i \leq n} \sfd(P_{i},Q_{i})^{p} \leq \sum_{i \leq n} \sfd(P_{i},Q_{i+1})^{p}, 
$$
with the convention $Q_{n+1} = Q_{1}$. It is also tacitly assumed that for 
each $(P,Q) \in \Gamma$, $\operatorname{dim}R(P)=\operatorname{dim}R(Q)$.
Accordingly, given $\varphi,\psi \in \mathcal{S}_{n}(\mathsf{B}(\mathsf{H}))$,  
$\nu \in \Pi_{\sfd}(\tr(\cdot)\,\mu_{\varphi},\tr(\cdot)\,\mu_{\psi})$ will be called 
$\sfd^{p}$-cyclically monotone if there exists 
a $\sfd^{p}$-cyclically monotone set $\Gamma$ such that $\pi(\Gamma) = 1$.

By lower semicontinuity of $\sfd$, it is well-known that
$\sfd^{p}$-cyclical monotonicity is a necessary condition for being optimal 
(see for instance \cite[Proposition B.16]{biacarave}).

\begin{proposition}
Let $\varphi,\psi \in \mathcal{S}_{n}(\mathsf{B}(\mathsf{H}))$ be given and $p \geq 1$. 
Then any optimal transport plan $\nu \in \Pi_{\sfd}(\tr(\cdot)\,\mu_{\phi},\tr(\cdot)\,\mu_{\psi})$ 
for the $W_{p}$ distance is $\sfd^{p}$-cyclically monotone.
\end{proposition}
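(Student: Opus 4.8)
The plan is to exploit the structural feature emphasised at the start of this subsection, namely that an admissible plan never moves mass between distinct connected components: since $\dim R(P)=\dim R(Q)$ holds $\nu$-a.e., the measure $\nu$ is concentrated on the disjoint union $\bigsqcup_{k}(\mathsf{P}_{k}\times \mathsf{P}_{k})$. I would therefore decompose the problem componentwise and reduce it to the classical statement on each piece, where $\sfd^{p}$ is a genuine bounded continuous cost.

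Concretely, I would set $\nu_{k}:=\nu\rest{\mathsf{P}_{k}\times \mathsf{P}_{k}}$, so that $\nu=\sum_{k}\nu_{k}$ and $\sum_{k}\nu_{k}(\mathsf{P}_{k}\times \mathsf{P}_{k})=1$. The first key step is to verify that each $\nu_{k}$ is optimal, on the component $(\mathsf{P}_{k},\sfd)$, between its own marginals $(\pi_{1})_{\sharp}\nu_{k}$ and $(\pi_{2})_{\sharp}\nu_{k}$. This is the usual cut-and-paste argument: were some $\nu_{k}$ improvable by a plan $\tilde\nu_{k}$ with the same marginals but strictly smaller cost, then $\sum_{j\neq k}\nu_{j}+\tilde\nu_{k}$ would be an admissible competitor to $\nu$ with strictly smaller total cost --- all quantities being finite since $\sfd\le \pi/2$ on each component --- contradicting optimality of $\nu$.

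The second step is to invoke the classical theory on each component. Since $(\mathsf{P}_{k},\sfd)$ is complete and separable and $\sfd^{p}$ is continuous and bounded there, \cite[Proposition B.16]{biacarave} applies and furnishes a $\sfd^{p}$-cyclically monotone set $\Gamma_{k}\subset \mathsf{P}_{k}\times \mathsf{P}_{k}$ on which $\nu_{k}$ is concentrated. I would then glue these together by setting $\Gamma:=\bigcup_{k}\Gamma_{k}$, for which $\nu(\Gamma)=\sum_{k}\nu_{k}(\Gamma_{k})=1$.

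It remains to check that $\Gamma$ itself is $\sfd^{p}$-cyclically monotone, and this is where the only genuine subtlety lies, namely the interplay with the constraint defining $\Pi_{\sfd}$. Given $(P_{1},Q_{1}),\dots,(P_{n},Q_{n})\in \Gamma$ with the convention $Q_{n+1}=Q_{1}$, I would distinguish two cases. If all these pairs lie in a single $\Gamma_{k}$, the required inequality is exactly the one furnished by cyclical monotonicity of $\Gamma_{k}$. Otherwise at least two pairs lie in different components, so some consecutive pair $(P_{i},Q_{i+1})$ has $P_{i}$ and $Q_{i+1}$ in distinct components; then $\sfd(P_{i},Q_{i+1})=+\infty$, the right-hand side of the cyclical inequality is $+\infty$, and the inequality holds trivially. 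Thus infinite inter-component distances, far from obstructing the argument, automatically enforce cyclical monotonicity across components, and the proof is complete.
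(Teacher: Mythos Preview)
Your proof is correct. The componentwise decomposition, the cut-and-paste argument for optimality of each $\nu_{k}$, the invocation of \cite[Proposition B.16]{biacarave} on each $(\mathsf{P}_{k},\sfd)$, and the gluing step all go through as written; your observation that any cycle touching two distinct components automatically has infinite right-hand side is exactly what makes $\Gamma=\bigcup_{k}\Gamma_{k}$ cyclically monotone.

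The paper, however, takes a shorter route: it simply notes that $\sfd$ is lower semicontinuous on $\mathsf{P}_{c}\times\mathsf{P}_{c}$ (continuous within each component, $+\infty$ across components, and the components are open) and invokes \cite[Proposition B.16]{biacarave} directly for the extended-valued cost on the whole space. No decomposition is performed; the proposition is stated as a direct consequence of the classical fact. Your componentwise strategy is in fact the argument the paper reserves for the \emph{converse} direction (Proposition~\ref{P:cyclicsufficient}), where one genuinely needs to work inside each $\mathsf{P}_{k}$ because sufficiency of cyclical monotonicity is typically stated for finite costs. For the necessity direction here, the decomposition is not needed --- the classical statement already covers lower semicontinuous costs with values in $[0,+\infty]$ --- so your argument is correct but does more than is required.
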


Looking at the transport on each single connected component of $\mathsf{P}_{c}$,
it is clear that cyclical monotonicity is indeed a sufficient condition for 
global optimality. 

\begin{proposition}\label{P:cyclicsufficient}
Let $\varphi,\psi \in \mathcal{S}_{n}(\mathsf{B}(\mathsf{H}))$ be given and 
let $\nu \in \Pi_{\sfd}(\tr(\cdot)\,\mu_{\varphi},\tr(\cdot)\,\mu_{\psi})$ 
be any $\sfd^{p}$-cyclically monotone transport plan.
Then $\nu$ is $W_{p}$-optimal, i.e. 
$$
\int \sfd(P,Q)^{p}\,\nu(dPdQ) = W_{p}(\varphi,\psi)^{p}.
$$
\end{proposition}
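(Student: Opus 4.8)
The plan is to reduce the statement to the classical sufficiency of cyclical monotonicity on a single Polish space, exploiting the decomposition of $\mathsf{P}_c$ into its connected components $\mathsf{P}_r$. First I would use the admissibility of $\nu$: since $\nu \in \Pi_{\sfd}(\tr(\cdot)\,\mu_{\varphi},\tr(\cdot)\,\mu_{\psi})$ is concentrated on pairs $(P,Q)$ with $\dim R(P) = \dim R(Q)$, it splits as a countable sum $\nu = \sum_{r} \nu_{r}$, where $\nu_{r}$ is the restriction of $\nu$ to $\mathsf{P}_{r}\times \mathsf{P}_{r}$. Pushing forward by the two coordinate projections shows that the mass $m_{r} := \nu(\mathsf{P}_{r}\times \mathsf{P}_{r})$ equals both $\tr(\cdot)\mu_{\varphi}(\mathsf{P}_{r})$ and $\tr(\cdot)\mu_{\psi}(\mathsf{P}_{r})$; in particular this common value is determined by the fixed marginals alone and does not depend on the chosen admissible plan. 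Note also that, since the plan $\nu$ is assumed to exist, $\Pi_{\sfd}$ is nonempty and $W_{p}(\varphi,\psi)\leq \pi/2$ is finite.

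Next I would observe that each $\nu_{r}$ remains $\sfd^{p}$-cyclically monotone. If $\Gamma$ is the $\sfd^{p}$-cyclically monotone set carrying $\nu$, then $\Gamma \cap (\mathsf{P}_{r}\times \mathsf{P}_{r})$ carries $\nu_{r}$ and inherits cyclical monotonicity, since any finite cyclic configuration drawn from it is in particular a configuration in $\Gamma$. Restricted to the single component $(\mathsf{P}_{r},\sfd)$, the distance is a genuine finite metric bounded by $\pi/2$, and this space is complete and separable; hence the cost $\sfd^{p}$ is bounded and continuous there. This places us exactly in the hypotheses of the classical theorem stating that a transport plan concentrated on a $c$-cyclically monotone set is optimal for its own marginals (see \cite{Vil}). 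Consequently $\nu_{r}$ minimises $\int \sfd^{p}\,d(\cdot)$ among all plans on $\mathsf{P}_{r}\times \mathsf{P}_{r}$ sharing the two marginals of $\nu_{r}$.

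Finally I would glue the components back together. Given any competitor $\tilde\nu \in \Pi_{\sfd}(\tr(\cdot)\,\mu_{\varphi},\tr(\cdot)\,\mu_{\psi})$, admissibility again forces $\tilde\nu = \sum_{r}\tilde\nu_{r}$ with the same component masses $m_{r}$ and the same restricted marginals, so each $\tilde\nu_{r}$ is an admissible competitor for $\nu_{r}$ on $\mathsf{P}_{r}$. Per-component optimality then yields $\int \sfd^{p}\,d\nu_{r} \leq \int \sfd^{p}\,d\tilde\nu_{r}$, and summing over $r$ gives $\int \sfd^{p}\,d\nu \leq \int \sfd^{p}\,d\tilde\nu$. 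Taking the infimum over $\tilde\nu$ and using that $\nu$ is itself admissible shows $\int \sfd^{p}\,d\nu = W_{p}(\varphi,\psi)^{p}$, which is the claim.

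The only genuine subtlety, rather than a real obstacle, is the presence of the extended metric: because $\sfd = +\infty$ across distinct components, the optimisation does not take place on a single Polish space and the classical result cannot be invoked directly. The decomposition above is precisely what isolates each $\mathsf{P}_{r}$, where the cost becomes bounded and continuous and the standard theory applies verbatim; the marginal-mass identities are what guarantee that competitors decompose compatibly, so that no mass can be shifted between components to undercut the per-component optima. I expect the remainder to be entirely routine.
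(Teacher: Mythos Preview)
Your proposal is correct and follows essentially the same approach as the paper: decompose $\nu$ along the connected components $\mathsf{P}_{r}$, observe that each restriction $\nu_{r}$ is still $\sfd^{p}$-cyclically monotone with marginals determined by the fixed data, invoke the classical equivalence of cyclical monotonicity and optimality on each $(\mathsf{P}_{r},\sfd)$ where the cost is finite, and sum. Your write-up is in fact more explicit than the paper's about why the component masses and restricted marginals are forced on any competitor, which is exactly the point that makes the gluing work.
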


\begin{proof}
Decompose both $\mu_{\varphi}$ and $\mu_{\psi}$ into as sum of their restriction 
to each connected component of $\mathsf{P}_{c}$, 
$\mathsf{P}_{n} = \{ P \in \mathsf{P}_{c} \colon \tr(P) = n \}$.
Then 
$$
\mu_{\varphi} = \sum_{n} \mu_{\varphi,n},\quad \mu_{\psi} = \sum_{n} \mu_{\psi,n},
$$
with $\mu_{\varphi,n}$ and $\mu_{\psi,n}$ having supports in $\mathsf{P}_{n}$. 
Then any plan $\nu \in \Pi_{\sfd}(\tr(\cdot)\,\mu_{\varphi},\tr(\cdot)\,\mu_{\psi})$ 
has to send $\mu_{\varphi,n}$ to $\mu_{\psi,n}$ 
and its optimality is equivalent to optimality between each $\mu_{\varphi,n}$ and $\mu_{\psi,n}$.

Let us now consider $\nu \in \Pi_{\sfd}(\tr(\cdot)\,\mu_{\varphi},\tr(\cdot)\,\mu_{\psi})$ and $\Gamma$ a $\sfd^{p}$-cyclically monotone 
set with $\nu(\Gamma) = 1$. 
We decompose as above $\nu = \sum \nu_{n}$ with $\nu_{n} \perp \nu_{m}$ if $n \neq m$
and $\nu_{n}$ having marginals $\mu_{\varphi,n}$ and $\mu_{\psi,n}$. 
Here $\nu_{n} \perp \nu_{m}$ is  in  the sense of measure theory i.e. with disjoint supports.
Then $\sfd$ restricted to $\mathsf{P}_{n}$ is finite and therefore, by classical theory of optimal transport (see for instance \cite{Vil}), $\sfd^{p}$-cyclical monotonicity is equivalent to optimality giving 
that each $\nu_{n}$ is optimal and therefore optimality of $\nu$ follows.
\end{proof}

From the classical theory \cite[Theorem 5.10]{Vil},
the following dual formulation of the problem is valid: for any 
$\varphi,\psi \in \mathcal{S}_{n}(\mathsf{B}(\mathsf{H}))$
\begin{align*}
&~\min_{\nu \in \Pi_{\sfd}(\tr(\cdot)\,\mu_{\varphi},\tr(\cdot)\,\mu_{\psi})} \int_{\mathsf{P}_{c}\times\mathsf{P}_{c}} \sfd(P,Q)^{p}\, \pi(dPdQ)  \\
&~ = \sup_{\substack{f,g \in C_{b}(\mathsf{P}_{c}) \\ g(Q) - f(P) \leq \sfd^{p}(P,Q)} } 
\left( \int g(Q)\tr(Q)\, \mu_{\psi}(dQ)- \int f(P)\tr(P) \,\mu_{\varphi}(dP)\right)  
\end{align*}

The right hand side can actually be substituted with some special couples of functions.

\begin{definition}[$\sfd^{p}$-convex function]
A function $f : \supp(\mu_{\varphi}) \to \R \cup \{+\infty \}$
is $\sfd^{p}$-convex if it is not identically $+\infty$ and there exists 
$h : \supp(\mu_{\psi}) \to \R \cup \{\pm\infty \}$ such that for each 
$P \in \supp(\mu_{\varphi})$ 
$$
f(P) = \sup_{Q \in \supp(\mu_{\psi})} h(Q) - \sfd(P,Q)^{p}.
$$
Then its $\sfd^{p}$-transform is a function $f^{\sfd^{p}}: \supp(\mu_{\psi}) \to \R$ 
defined for each $Q \in \supp(\mu_{\psi})$ by:
$$
f^{\sfd^{p}}(Q) : = \inf_{P \in \supp(\mu_{\varphi})} f(P) + \sfd(P,Q)^{p}.
$$
\end{definition}
Theorem 5.10 of \cite{Vil} gives that the previous duality can be rewritten as follows
\begin{align*}
\min_{\nu \in \Pi_{\sfd}(\tr(\cdot)\,\mu_{\varphi},\tr(\cdot)\,\mu_{\psi})} \int_{\mathsf{P}_{c}\times\mathsf{P}_{c}} \sfd(P,Q)^{p}\, \pi(dPdQ)  
= \sup_{f \in L^{1}(\tr(\cdot)\mu_{\varphi})} \left( \int f^{\sfd^{p}}\tr(\cdot)\, \mu_{\psi}- 
\int f\tr(\cdot)\, \mu_{\varphi}\right), 
\end{align*}
and in the above supremum one might as well impose that $f$ be $\sfd^{p}$-convex.
The previous supremum is actually achieved and 
the maximum will be called a Kantorovich potential.

\begin{theorem}\label{T:Dualmain}
Given any $\varphi,\psi \in \mathcal{S}_{n}(\mathsf{B}(\mathsf{H}))$ with 
$W_{p}(\varphi,\psi) < \infty$, 
there exists $f \in L^{1}(\tr(\cdot)\mu_{\varphi})$ and $\sfd^{p}$-convex
such that 
$$
W_{p}(\varphi,\psi)^{p} = \int f^{\sfd^{p}}\tr(\cdot)\, \mu_{\psi}- 
\int f\tr(\cdot)\, \mu_{\varphi}. 
$$
In particular, $\nu \in \Pi_{\sfd}(\tr(\cdot)\,\mu_{\varphi},\tr(\cdot)\,\mu_{\psi})$ 
is $W_{p}$-optimal 
if and only if 
$$
\nu\Big( \big\{ (P,Q) \in \mathsf{P}_{c}\times \mathsf{P}_{c} \colon f^{\sfd^{p}}(Q) - f(P) = \sfd(P,Q)^{p}  \big\} \Big)
= 1.
$$
\end{theorem}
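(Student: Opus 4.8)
The plan is to reduce the problem to the individual connected components of $\mathsf{P}_{c}$, on each of which $\sfd$ is a genuine metric bounded by $\pi/2$, invoke the classical attainment of Kantorovich potentials there, and then glue. Since $W_{p}(\varphi,\psi)<\infty$ there is at least one admissible plan, and admissible plans are concentrated on $\bigcup_{n}\mathsf{P}_{n}\times\mathsf{P}_{n}$ with $\mathsf{P}_{n}=\{P\in\mathsf{P}_{c}\colon \tr(P)=n\}$; hence $\tr(\cdot)\mu_{\varphi}$ and $\tr(\cdot)\mu_{\psi}$ carry the same mass on each $\mathsf{P}_{n}$. Writing $\tr(\cdot)\mu_{\varphi}=\sum_{n}\sigma_{n}$ and $\tr(\cdot)\mu_{\psi}=\sum_{n}\tau_{n}$ with $\sigma_{n},\tau_{n}$ supported on $\mathsf{P}_{n}$ and $\sigma_{n}(\mathsf{P}_{n})=\tau_{n}(\mathsf{P}_{n})=:\alpha_{n}$, the decomposition of plans already used in Proposition \ref{P:cyclicsufficient} gives $W_{p}(\varphi,\psi)^{p}=\sum_{n}W_{p}^{\mathsf{P}_{n}}(\sigma_{n},\tau_{n})^{p}$, a sum with at most countably many nonzero terms.

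On each $(\mathsf{P}_{n},\sfd)$, a complete separable metric space with $\sfd\le\pi/2$, the classical theory \cite[Theorem 5.10]{Vil} furnishes a $\sfd^{p}$-convex potential $f_{n}$ on $\supp(\sigma_{n})$ realizing $\int f_{n}^{\sfd^{p}}\,d\tau_{n}-\int f_{n}\,d\sigma_{n}=W_{p}^{\mathsf{P}_{n}}(\sigma_{n},\tau_{n})^{p}$; since the cost is bounded, both $f_{n}$ and $f_{n}^{\sfd^{p}}$ are bounded. The identity $(f_{n}+c)^{\sfd^{p}}=f_{n}^{\sfd^{p}}+c$ together with $\sigma_{n}(\mathsf{P}_{n})=\tau_{n}(\mathsf{P}_{n})$ shows that adding a constant to $f_{n}$ leaves the above dual value unchanged, so I may normalize each potential so that $0\le f_{n}\le (\pi/2)^{p}$.

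Define $f$ on $\supp(\tr(\cdot)\mu_{\varphi})$ by $f|_{\mathsf{P}_{n}}:=f_{n}$. It is bounded, hence $f\in L^{1}(\tr(\cdot)\mu_{\varphi})$. The essential point is that $\sfd(P,Q)=+\infty$ when $P,Q$ lie in different components, so every supremum or infimum appearing in the definitions of $\sfd^{p}$-convexity and of the $\sfd^{p}$-transform decouples across components: gluing the generators of the $f_{n}$ into a single $h$, for $P\in\mathsf{P}_{n}$ the expression $\sup_{Q}h(Q)-\sfd(P,Q)^{p}$ only sees $Q\in\mathsf{P}_{n}$ and returns $f_{n}(P)$, so $f$ is globally $\sfd^{p}$-convex, and likewise $f^{\sfd^{p}}|_{\mathsf{P}_{n}}=f_{n}^{\sfd^{p}}$ is bounded. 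Summing the per-component identities over $n$ yields $W_{p}(\varphi,\psi)^{p}=\int f^{\sfd^{p}}\tr(\cdot)\,\mu_{\psi}-\int f\tr(\cdot)\,\mu_{\varphi}$, which is the first assertion.

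For the optimality characterization I use the pointwise bound $f^{\sfd^{p}}(Q)-f(P)\le\sfd(P,Q)^{p}$, immediate from the definition of the transform. For every admissible $\nu$ the value $\int\big(f^{\sfd^{p}}(Q)-f(P)\big)\,\nu(dPdQ)=\int f^{\sfd^{p}}\tr(\cdot)\,\mu_{\psi}-\int f\tr(\cdot)\,\mu_{\varphi}=W_{p}(\varphi,\psi)^{p}$ depends only on the marginals, so $\int\sfd(P,Q)^{p}\,\nu(dPdQ)-W_{p}(\varphi,\psi)^{p}=\int\big(\sfd(P,Q)^{p}-f^{\sfd^{p}}(Q)+f(P)\big)\,\nu(dPdQ)\ge 0$. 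Thus $\nu$ is $W_{p}$-optimal exactly when this nonnegative integrand vanishes $\nu$-a.e., that is when $\nu$ is concentrated on $\{(P,Q)\colon f^{\sfd^{p}}(Q)-f(P)=\sfd(P,Q)^{p}\}$. I expect the main obstacle to be the gluing step: one must check both that renormalizing the component potentials does not alter the global dual value (this is exactly where equality of the masses $\alpha_{n}$ is needed) and that the extended-metric structure forces $\sfd^{p}$-convexity and the $\sfd^{p}$-transform to decouple along the connected components.
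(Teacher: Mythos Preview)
Your proof is correct and follows essentially the same route as the paper: decompose $\tr(\cdot)\mu_{\varphi}$ and $\tr(\cdot)\mu_{\psi}$ along the connected components $\mathsf{P}_{n}$, invoke \cite[Theorem 5.10]{Vil} on each to obtain potentials $f_{n}$ with generators $h_{n}$, and glue them into a global $f$ using that $\sfd=+\infty$ across components forces both $\sfd^{p}$-convexity and the $\sfd^{p}$-transform to decouple. Your explicit normalization $0\le f_{n}\le(\pi/2)^{p}$ to secure $f\in L^{1}(\tr(\cdot)\mu_{\varphi})$, and your written-out argument for the optimality characterization via the nonnegative integrand $\sfd(P,Q)^{p}-f^{\sfd^{p}}(Q)+f(P)$, are points the paper leaves implicit.
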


\begin{proof}
Reasoning like in the proof of Proposition \ref{P:cyclicsufficient}, 
on each connected component $\mathsf{P}_{n}$ of $\mathsf{P}_{c}$, the metric $\sfd$ is continuous 
yielding (see \cite[Theorem 5.10]{Vil})  for each $n \in \N$  the existence
of $\sfd^{p}$-convex functions $f_{n}:\supp(\mu_{\varphi,n}) \to \R$, 
meaning that it is not identically $+\infty$ and there exists 
$h : \supp(\mu_{\psi,n}) \to \R \cup \{\pm\infty \}$ such that for each 
$P \in \supp(\mu_{\varphi,n})$ 
$$
f_{n}(P) = \sup_{Q \in \supp(\mu_{\psi,n})} h_{n}(Q) - \sfd(P,Q)^{p},
$$
such that 
a transport plan between $\mu_{\varphi,n}$ and $\mu_{\psi,n}$ is optimal if and only if 
is concentrated inside the following $\sfd^{p}$-cyclically monotone set:
$$
\Big\{ (P,Q) \in \supp(\mu_{\varphi,n})\times \supp(\mu_{\psi,n})\colon f_{n}^{\sfd^{p}}(Q) - f_{n}(P) = \sfd(P,Q)^{p} \Big\},
$$
where  $f^{\sfd^{p}}_{n}$ is defined considering the infimum only among those 
$P \in \supp(\mu_{\varphi,n})$.
In particular,
$$
\int_{\supp(\mu_{\varphi,n})\times \supp(\mu_{\psi,n})} \sfd(P,Q)^{p}\, \nu(dPdQ) = 
\int f^{\sfd^{p}}_{n} \tr(\cdot)\,\mu_{\psi_{n}} - \int f_{n}\tr(\cdot)\, \mu_{\varphi_{n}}.
$$

\noindent
Define then $f (P) : = f_{n}(P)$ and $h(Q) : = h_{n}(Q)$ 
for each $P \in \supp(\mu_{\varphi,n})$ and $Q \in \supp(\mu_{\psi,n})$ and notice that 
$$
f(P) = \sup_{Q \in \supp(\mu_{\psi})} h(Q) - \sfd(P,Q)^{p},
$$
giving that $f$ is $\sfd^{p}$-convex. 
Simply noticing that  $\sfd$ takes value $+\infty$ if $P$ and $Q$ does not belong to the same connected component of $\mathsf{P}_{c}$, it follows that for $Q \in \supp(\mu_{\psi,n})$
satisfies $f^{\sfd^{p}}(Q) = f_{n}^{\sfd^{p}}(Q)$, where  $f^{\sfd^{p}}$ 
is its $\sfd^{p}$-transform, given by 
$$
f^{\sfd^{p}}(Q) : = \inf_{P \in \supp(\mu_{\varphi})} f(P) + \sfd(P,Q)^{p}.
$$
Hence 
$W_{p}(\varphi,\psi)^{p} = \int f^{\sfd^{p}}\tr(\cdot)\, \mu_{\psi}- 
\int f\tr(\cdot)\, \mu_{\varphi}$, and the second claim follows straightforwardly.
\end{proof}

We now focus on representing Kantorovich potentials.

\begin{lemma}\label{L:represent}
For any $f \in L^{1}(\tr(\cdot)\,\mu_{\varphi})$, there exists an unbounded linear and densely defined operator 
$C$  such that $C\rho_{\varphi} \in \mathcal{L}^{1}(\mathsf{H})$ (the composition extends from the domain to a bounded operator)
and $$\tr(P)f(P) = \tr(CP), \quad P \in \supp(\mu_{\varphi}).$$ 
\end{lemma}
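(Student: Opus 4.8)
The plan is to realize $C$ as a diagonal (multiplication) operator adapted to the spectral decomposition of $\rho_\varphi$. Recall from Definition \ref{D:Phimap} that $\supp(\mu_\varphi) = \{P_{V_i}\}_i$ consists exactly of the spectral projections occurring in $\rho_\varphi = \sum_i \lambda_i P_{V_i}$ (distinct eigenvalues $\lambda_i>0$, orthogonal eigenspaces $V_i$), and that membership $f \in L^1(\tr(\cdot)\mu_\varphi)$ is precisely the summability
$$
\sum_i |f(P_{V_i})|\,\lambda_i \dim(V_i) < \infty .
$$
Writing $\mathsf{H} = \overline{R(\rho_\varphi)} \oplus N(\rho_\varphi)$ with $\overline{R(\rho_\varphi)} = \overline{\bigoplus_i V_i}$, I would set
$$
C\xi := \sum_i f(P_{V_i})\,P_{V_i}\xi ,
$$
declared to be $0$ on $N(\rho_\varphi)$, with $\dom(C)$ the set of $\xi$ for which this series converges in $\mathsf{H}$. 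Since $f$ is real-valued this $C$ is symmetric, though that is not needed.

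First I would check that $C$ is densely defined: the algebraic direct sum $\bigoplus_i V_i$ of eigenvectors lies in $\dom(C)$ and is dense in $\overline{R(\rho_\varphi)}$, while $N(\rho_\varphi) \subset \dom(C)$ by construction, so $\dom(C)$ is dense in $\mathsf{H}$. The crucial point is then the following. The summability above forces $\lambda_i |f(P_{V_i})|\dim(V_i) \to 0$, hence $M := \sup_i \lambda_i |f(P_{V_i})| < \infty$. Therefore for every $\xi \in \mathsf{H}$,
$$
\sum_i |f(P_{V_i})|^2 \lambda_i^2 \|P_{V_i}\xi\|^2 \leq M^2 \sum_i \|P_{V_i}\xi\|^2 \leq M^2\|\xi\|^2 ,
$$
so $\rho_\varphi \xi = \sum_i \lambda_i P_{V_i}\xi \in \dom(C)$ for all $\xi$; the composition $C\rho_\varphi$ is thus already defined on all of $\mathsf{H}$ (a fortiori it extends to a bounded operator) and equals $\sum_i \lambda_i f(P_{V_i})P_{V_i}$. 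As the eigenspaces are mutually orthogonal, its singular values are $|\lambda_i f(P_{V_i})|$ with multiplicities $\dim(V_i)$, whence $\|C\rho_\varphi\|_1 = \sum_i |f(P_{V_i})|\lambda_i \dim(V_i) < \infty$, i.e. $C\rho_\varphi \in \mathcal{L}^1(\mathsf{H})$.

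Finally the trace identity: for $P = P_{V_i} \in \supp(\mu_\varphi)$, since $P_{V_i}\xi \in V_i \subset \dom(C)$ and $C$ acts on $V_i$ as the scalar $f(P_{V_i})$, the composition $C P_{V_i}$ is the finite-rank operator $f(P_{V_i}) P_{V_i}$, so
$$
\tr(C P_{V_i}) = f(P_{V_i})\,\tr(P_{V_i}) = f(P_{V_i})\dim(V_i) = \tr(P)f(P),
$$
as required. I expect the only genuine obstacle to be the interplay between the decay of the eigenvalues $\lambda_i$ and the possible unboundedness of $f$: the asserted identity would be vacuous if $C\rho_\varphi$ were not trace class, and what rescues it is exactly that the $L^1$-summability defining membership of $f$ forces $\lambda_i f(P_{V_i}) \to 0$, taming the otherwise unbounded $C$ after composition with $\rho_\varphi$. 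Everything else — density of $\dom(C)$ and the finite-rank computation of $C P_{V_i}$ — is routine.
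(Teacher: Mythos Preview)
Your proof is correct and follows essentially the same approach as the paper: the paper defines $C := f(\rho_\varphi)$ via Borel functional calculus (after viewing $f$ as a function on the spectrum), which is exactly your explicit diagonal operator $C\xi = \sum_i f(P_{V_i})P_{V_i}\xi$ with the same domain, and the paper likewise pivots on the observation that $f \in L^1(\tr(\cdot)\mu_\varphi)$ forces the sequence $(\lambda_i f(P_{V_i}))_i$ to be bounded. Your write-up is in fact more detailed than the paper's, which leaves the verification of $C\rho_\varphi \in \mathcal{L}^1$ and the trace identity as ``straightforward''.
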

\begin{proof}
Let $\sum_i \lambda_i P_{V_i}$ be the spectral decomposition of $\rho_{\varphi}$ with strictly decreasing eigenvalues. Then $f$ defines a Borel function on the spectrum of $\rho_{\varphi}$ with $f(\lambda_i):=f(P_i)$ and possibly $f(0)=0$.
We simply define $C:=f(\rho_{\varphi})$ by the functional calculus. In particular 
$$\operatorname{Dom}(C)=\Big{\{}x \in \mathsf{H}: \, \sum_i |f(P_{V_i})|^2 \|P_{V_i}x\|^2< \infty\Big{\}}$$
is of course dense. 
The rest is straightforward noticing that  the condition
$f \in L^{1}(\tr(\cdot)\mu_{\varphi})$ 
implies that the sequence $(\lambda_i f(P_i))_i$ is bounded.  
\end{proof}

\begin{corollary}\label{C:duality}
Given any $\varphi,\psi \in \mathcal{S}_{n}(\mathsf{B}(\mathsf{H}))$ with 
$W_{p}(\varphi,\psi) < \infty$, 
there exist $C$ and $C^{\sfd^{p}}$  
unbounded linear and densely defined operators on  $\mathsf{H}$
such that the following points are verified. 
\begin{itemize}
\item[1.] The $W_{p}$-cost verifies 
$W_{p}(\varphi,\psi) = \tr(C^{\sfd^{p}}\rho_{\psi}) - \tr(C\rho_{\varphi})$.
\item[2.] Any $\nu \in \Pi_{\sfd}(\mu_{\varphi},\mu_{\psi})$ is $W_{p}$-optimal if and only if 
$$
\nu \left( 
\Big\{ (P,Q) \in \supp(\mu_{\varphi})\times \supp(\mu_{\psi})\colon 
\tr(C^{\sfd^{p}}Q) - \tr(CP) = \frac{\sfd(P,Q)^{p}}{\tr(P)} \Big\}
\right)=1
;
$$
with $C, C^{\sfd^{p}}$ are such that
$$
\tr(C^{\sfd^{p}}Q) - \tr(CP) \leq  \frac{\sfd(P,Q)^{p}}{\tr(P)} , \qquad \forall \, (P,Q) \in 
\supp(\mu_{\varphi})\times \supp(\mu_{\psi}), \ \tr(P)= \tr(Q).
$$
\end{itemize}
\end{corollary}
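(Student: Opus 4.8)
Corollary \ref{C:duality} is essentially a reformulation of Theorem \ref{T:Dualmain} where the Kantorovich potentials $f$ and $f^{\sfd^p}$ are replaced by densely defined operators $C$ and $C^{\sfd^p}$ via Lemma \ref{L:represent}. Let me think about how the pieces fit together.

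From Theorem \ref{T:Dualmain}, I have a $\sfd^p$-convex function $f \in L^1(\tr(\cdot)\mu_\varphi)$ with
$$W_p(\varphi,\psi)^p = \int f^{\sfd^p}\tr(\cdot)\,\mu_\psi - \int f\tr(\cdot)\,\mu_\varphi,$$
and the optimality characterization via the contact set. Lemma \ref{L:represent} converts a function on $\supp(\mu_\varphi)$ into an operator $C = f(\rho_\varphi)$ so that $\tr(P)f(P) = \tr(CP)$. So the plan is to apply the Lemma twice.

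**The plan.** First apply Theorem \ref{T:Dualmain} to get the potential $f$ on $\supp(\mu_\varphi)$ and its transform $f^{\sfd^p}$ on $\supp(\mu_\psi)$. Then I would apply Lemma \ref{L:represent} to $f$ to obtain $C := f(\rho_\varphi)$ with $\tr(P)f(P) = \tr(CP)$ for $P \in \supp(\mu_\varphi)$. Symmetrically — and this is the point that needs a small check — I would apply the same construction to $f^{\sfd^p}$, viewed as a function on $\supp(\mu_\psi)$, using the spectral decomposition of $\rho_\psi$ this time, to produce $C^{\sfd^p} := f^{\sfd^p}(\rho_\psi)$ with $\tr(Q)f^{\sfd^p}(Q) = \tr(C^{\sfd^p}Q)$ for $Q \in \supp(\mu_\psi)$. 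For this I need $f^{\sfd^p} \in L^1(\tr(\cdot)\mu_\psi)$, which follows because $f^{\sfd^p}$ is bounded above by $f^{\sfd^p}(Q) \le f(P)+\sfd(P,Q)^p$ for any fixed $P$ and bounded below as an infimum of $L^1$-controlled quantities, the distance being bounded by $\pi/2$; so $f^{\sfd^p}$ is integrable against the probability measure $\tr(\cdot)\mu_\psi$.

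**Translating the formulas.** Once $C$ and $C^{\sfd^p}$ are in hand, I substitute into the duality formula of Theorem \ref{T:Dualmain}. The integral $\int f\tr(\cdot)\,\mu_\varphi = \sum_i \lambda_i \tr(P_{V_i})f(P_{V_i}) = \sum_i \lambda_i \tr(CP_{V_i}) = \tr(C\rho_\varphi)$, using $\rho_\varphi = \sum_i \lambda_i P_{V_i}$ and linearity/continuity of the trace; likewise $\int f^{\sfd^p}\tr(\cdot)\,\mu_\psi = \tr(C^{\sfd^p}\rho_\psi)$. This gives point 1 (the statement writes $W_p$ rather than $W_p^p$, presumably a harmless typo carried from the source). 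For point 2, the contact condition $f^{\sfd^p}(Q) - f(P) = \sfd(P,Q)^p$ is divided through by $\tr(P) = \tr(Q)$ on each connected component and rewritten using $f(P) = \tr(CP)/\tr(P)$ and $f^{\sfd^p}(Q) = \tr(C^{\sfd^p}Q)/\tr(Q)$, which is exactly the displayed contact set; the inequality $f^{\sfd^p}(Q) - f(P) \le \sfd(P,Q)^p$ (valid by definition of the $\sfd^p$-transform) becomes the stated inequality after the same division.

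**The main obstacle.** The only genuine subtlety is the well-definedness of the operator representation when eigenvalues of $\rho_\varphi$ repeat: Lemma \ref{L:represent} uses the spectral decomposition \emph{without} repetitions, so $f$ must take a single value on each eigenspace $V_i$, whereas a priori the Kantorovich potential is merely defined pointwise on $\supp(\mu_\varphi)$. Since here $\mu_\varphi = \Phi(\varphi)$ uses precisely the repetition-free decomposition (Definition \ref{D:Phimap}), each point of $\supp(\mu_\varphi)$ is a distinct projection $P_{V_i}$ with distinct eigenvalue $\lambda_i$, so $f$ does define an unambiguous Borel function on the spectrum and Lemma \ref{L:represent} applies verbatim. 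I would remark on this compatibility to justify invoking the Lemma, and the rest is the routine trace computation above.
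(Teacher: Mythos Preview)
Your approach is essentially identical to the paper's: invoke Theorem~\ref{T:Dualmain} to obtain the Kantorovich potential $f$, apply Lemma~\ref{L:represent} to $f$ (with $\rho_\varphi$) and to $f^{\sfd^p}$ (with $\rho_\psi$) to produce $C$ and $C^{\sfd^p}$, then rewrite the duality and contact-set statements via $f(P)\tr(P)=\tr(CP)$. You add detail the paper omits---the $L^1$ integrability of $f^{\sfd^p}$ and the compatibility of Lemma~\ref{L:represent} with the repetition-free decomposition---but the argument is the same.
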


\begin{proof}
To prove the first point we use  Theorem \ref{T:Dualmain} to deduce the existence of a solution $f \in L^{1}(\tr(\cdot)\mu_{\varphi})$ 
of the dual problem with 
$$
W_{p}(\mu_{\varphi},\mu_{\psi}) 
= \int f^{c_{p}}\tr(\cdot)\, \mu_{\psi} - \int f \tr(\cdot)\, \mu_{\varphi}. 
$$
Then apply Lemma \ref{L:represent} to such $f$ to obtain $C$  
such that 
$f(P)\tr(P) = \tr(CP)$ for all $P \in \supp(\mu_{\varphi})$, implying 
$$
\int f\tr(\cdot)\, \mu_{\varphi} = \tr(C\rho_{\varphi}).
$$
Denoting with $C^{\sfd^{p}}$ any linear map  representing $f^{\sfd^{p}}$, the first point follows.
The second point is then a reformulation of the second point of  Theorem \ref{T:Dualmain}.
\end{proof}

\medskip
%-------------------------------------------------------------------------------------
%-------------------------------------------------------------------------------------
\subsection{Wasserstein geodesics}\label{S:geodesicW}
In this section and in the following one we will study how to match 
two other possible approaches in defining a Wasserstein type distance over normal states 
with the one we introduced in Section \ref{S:Wasserstein}. 

The geodesic structure of $\mathsf{P}_{c}$ 
will permit to investigate the geodesic structure of $\mathcal{S}_{n}(\mathsf{B}(\mathsf{H}))$.
We begin by recalling the classical definition of geodesic adapted to the setting of
$\mathcal{S}_{n}(\mathsf{B}(\mathsf{H}))$.

\begin{definition}\label{D:W-geodesic}
Given $\varphi,\psi \in \mathcal{S}_{n}(\mathsf{B}(\mathsf{H}))$, a curve
$$
[0,1] \ni t \mapsto \phi_{t} \in \mathcal{S}_{n}(\mathsf{B}(\mathsf{H})), \qquad 
\phi_{0} = \varphi, \ \phi_{1} = \psi,
$$
is a $\mathcal{C}_{p}$-geodesic (resp. a $W_{p}$-geodesic) if
$\mathcal{C}_{p}(\phi_{t},\phi_{s}) = |t-s| \mathcal{C}_{p}(\varphi,\psi)$ 
(resp. $W_{p}(\phi_{t},\phi_{s}) = |t-s| W_{p}(\varphi,\psi)$), for any $s,t \in [0,1]$.
\end{definition}

We start looking
for geodesic convexity of suitable subsets of $\mathcal{P}(\mathsf{P}_{c})$.

\begin{proposition}\label{P:D_{1}convex}
The set $\mathcal{D}_{1}(\mathsf{P}_{c})$ of discrete, non-negative measures having integral of the trace equal to $1$ as defined in \eqref{E:discretetr} is 
weakly convex with respect to $W_{p}^{\mathsf{P}_{c}}$ in the following sense: for any 
$\mu_{0},\mu_{1} \in \mathcal{D}_{1}(\mathsf{P}_{c})$ 
such that $W_{p}^{\mathsf{P}_{c}}\big{(}\tr(\cdot)\mu_{0},\tr(\cdot)\mu_{1}\big{)} < \infty$, there exists a curve 
$(\mu_{t})_{t\in [0,1]} \subset \mathcal{D}_{1}(\mathsf{P}_{c})$ 
with initial point $\mu_{0}$ and final point $\mu_{1}$ such that 
$ t\longmapsto \tr(\cdot)\mu_{t}$ is a $W_{p}^{\mathsf{P}_{c}}$-geodesic.
\end{proposition}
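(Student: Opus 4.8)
The plan is to construct the interpolating curve by displacement interpolation of an optimal plan, exploiting the crucial geometric fact that along any geodesic lying in a fixed connected component $\mathsf{P}_r$ the rank, and hence the trace, is constant. First, since $W_p^{\mathsf{P}_c}(\tr(\cdot)\mu_0,\tr(\cdot)\mu_1)<\infty$, the set of admissible $\sfd$-transport plans is nonempty, so Theorem \ref{T:existence} furnishes an optimal plan. Because $\mu_0=\sum_i\alpha_i\delta_{P_i}$ and $\mu_1=\sum_j\beta_j\delta_{Q_j}$ are discrete, this optimal plan has the form $\nu=\sum_{i,j}\gamma_{i,j}\,\delta_{P_i}\otimes\delta_{Q_j}$ with $\gamma_{i,j}\ge 0$ and $\sum_{i,j}\gamma_{i,j}=1$, and admissibility forces $\tr(P_i)=\tr(Q_j)=:r_{i,j}$ whenever $\gamma_{i,j}>0$, since mass is transported only within a single connected component.

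Next I would choose, for each pair $(i,j)$ with $\gamma_{i,j}>0$, a minimal constant-speed geodesic $t\mapsto R_{i,j}(t)$ in the common component $\mathsf{P}_{r_{i,j}}$ joining $P_i$ to $Q_j$; such a geodesic exists by the cited result of Mennucci that any two points of $\mathsf{P}_r(\mathsf{H})$ are joined by a minimal geodesic. As the geodesic never leaves $\mathsf{P}_{r_{i,j}}$, one has $\tr(R_{i,j}(t))=r_{i,j}$ for every $t$, and since there are only countably many such pairs no measurable-selection argument is needed. I then set $\eta_t:=\sum_{i,j}\gamma_{i,j}\,\delta_{R_{i,j}(t)}$ and $\mu_t:=\sum_{i,j}\frac{\gamma_{i,j}}{r_{i,j}}\,\delta_{R_{i,j}(t)}$, so that constancy of the trace gives $\tr(\cdot)\mu_t=\eta_t$. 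Each $\eta_t$ is a probability measure and each $\mu_t$ a discrete non-negative measure, whence $\mu_t\in\mathcal{D}_1(\mathsf{P}_c)$. Evaluating at the endpoints, $R_{i,j}(0)=P_i$ and $R_{i,j}(1)=Q_j$, and inserting the marginal constraints $\sum_j\gamma_{i,j}=\alpha_i\tr(P_i)$ (together with $r_{i,j}=\tr(P_i)$) recovers $\mu_0$ and $\mu_1$.

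Finally I would verify the geodesic property by the standard coupling-plus-triangle argument. For $0\le s\le t\le 1$ the measure $\sum_{i,j}\gamma_{i,j}\,\delta_{R_{i,j}(s)}\otimes\delta_{R_{i,j}(t)}$ is an admissible $\sfd$-plan between $\eta_s$ and $\eta_t$ (its two endpoints share the rank $r_{i,j}$), and constant speed of each $R_{i,j}$ yields $W_p^{\mathsf{P}_c}(\eta_s,\eta_t)\le |t-s|\,C$, where $C^p:=\int\sfd^p\,d\nu=W_p^{\mathsf{P}_c}(\eta_0,\eta_1)^p$ by optimality of $\nu$. Inserting $\eta_s,\eta_t$ into the triangle inequality between $\eta_0$ and $\eta_1$ forces all the intervening inequalities to be equalities, so $W_p^{\mathsf{P}_c}(\eta_s,\eta_t)=|t-s|\,W_p^{\mathsf{P}_c}(\eta_0,\eta_1)$, i.e. $t\mapsto\tr(\cdot)\mu_t$ is a $W_p^{\mathsf{P}_c}$-geodesic. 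The only genuinely delicate point is preservation of the normalization $\int\tr\,d\mu_t=1$: this is exactly where the geometry enters, as it hinges on the rank (hence the trace) being constant along geodesics of each $\mathsf{P}_r$, which in turn is guaranteed by the admissibility constraint in $\Pi_{\sfd}$ confining transport to a single connected component.
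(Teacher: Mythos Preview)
Your proof is correct and follows essentially the same approach as the paper: displacement interpolation along minimal geodesics of $\mathsf{P}_c$ applied to an optimal discrete plan, using constancy of the rank along each geodesic to preserve the trace normalization. The paper packages the interpolation via a measure on $\Geo(\mathsf{P}_c)$ and the evaluation maps while you write $\eta_t$ and $\mu_t$ directly, and you make the triangle-inequality step explicit where the paper leaves it implicit, but the arguments are otherwise the same.
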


\begin{proof}
Given $\mu_{0},\mu_{1} \in \mathcal{D}_{1}(\mathsf{P}_{c})$ such that 
$W_{p}^{\mathsf{P}_{c}}\big{(}\tr(\cdot)\mu_{0},\tr(\cdot)\mu_{1}\big{)} < \infty$, Theorem \ref{T:existence} ensures the existence 
of an optimal transport plan $\nu \in \Pi_{\sfd}(\mu_{0}, \mu_{1})$.
If $\mu_{0} = \sum_{i}\alpha_{i} \delta_{P_{V_{i}}}$
and $\mu_{1} = \sum_{i}\beta_{i} \delta_{P_{Z_{i}}}$,
there exist non-negative coefficients $\theta_{i,j}$ such that 
$$
\nu = \sum_{i,j} \theta_{i,j}\delta_{P_{V_{i}}}\otimes\delta_{P_{Z_{j}}}, \qquad 
\sum_{j}\theta_{i,j} = \alpha_{i}\tr(P_{V_{i}}), \quad 
\sum_{i}\theta_{i,j} = \beta_{j}\tr(P_{Z_{j}}).
$$
Since $\nu$ is admissible, whenever $\theta_{i,j}> 0$ it follows that 
$\tr(P_{V_{i}})=\tr(P_{Z_{j}})$ hence we can 
consider $\gamma_{i,j}$ any geodesic of $(\mathsf{P}_{c},\sfd)$ connecting $V_{i}$ to $Z_{j}$. Its
existence is assured by the fact that $V_{i}$ and $Z_{j}$ belong
same connected component of $(\mathsf{P}_{c},\sfd)$. In particular $\tr(\gamma_{i,j}(t))$ 
is constant for each $t \in [0,1]$ and depends only on $i$.

\noindent Now define the following non-negative measure over $\Geo(\mathsf{P}_{c})$ %
$$
\boldsymbol{\gamma} : = \sum_{i,j} \frac{\theta_{i,j}}{\tr(P_{V_{i}})}\delta_{\gamma_{i,j}} 
$$
and, denoting by $\ee_{t} : \Geo(\mathsf{P}_{c}) \to \mathsf{P}_{c}$ 
the evaluation map at time $t$ we have a
curve of measures $[0,1]\ni t \mapsto \mu_{t} : = (\ee_{t})_{\sharp}(\boldsymbol{\gamma})$.
First notice that $\mu_{t} \in \mathcal{D}_{1}(\mathsf{P}_{c})$:
indeed $\boldsymbol{\gamma}$ is a discrete measure therefore the same is valid for $\mu_{t}$ and 
\begin{align*}
\int_{\mathsf{P}_{c}}\tr(P)\, \mu_{t}(dP) 
= \int_{\mathsf{P}_{c}}\tr(P)\, (\ee_{t})_{\sharp}(\boldsymbol{\gamma})(dP) 
&~ = \sum_{i,j}\theta_{i,j} = 1.
\end{align*}
Hence $\mu_{t}\in \mathcal{D}_{1}(\mathsf{P}_{c})$ and finally
\begin{align*}
W_{p}^{\mathsf{P}_{c}}(\tr(\cdot)\mu_{s},\tr(\cdot) \mu_{t})^{p} 
&~ \leq 
\int \sfd(P,Q)^{p} \, ((\ee(s),\ee(t))_{\sharp}(\sum_{i,j}\theta_{i,j}\delta_{\gamma_{i,j}}))(dPdQ)  \\
&~ = 
|s-t|^{p} 
\int \sfd(P,Q)^{p} \, ((\ee(0),\ee(1))_{\sharp}(\sum_{i,j}\theta_{i,j}\delta_{\gamma_{i,j}}))(dPdQ)  \\
&~ = |s-t|^{p}  \int \sfd(P,Q)^{p} \, \nu(dPdQ)  \\
&~= |s-t|^{p} 
W_{p}^{\mathsf{P}_{c}}(\tr(\cdot)\mu_{0},\tr(\cdot) \mu_{1})^{p}. 
\end{align*}
This proves the claim.
\end{proof}

To obtain a Wasserstein geodesic between normal states,
Proposition \ref{P:D_{1}convex} must be reinforced with 
the additional assumption that 
 $\mu_{t} \in \mathcal{D}_{1}^{\perp}(\mathsf{P}_{c})$.
The condition $\mu_{t} \in \mathcal{D}_{1}^{\perp}(\mathsf{P}_{c})$ is actually quite demanding 
and has  the strong and rigid consequences on the two measures 
$\mu_{\varphi},\mu_{\psi}$
it is linking.

Recall the definition \eqref{E:discretetrort} of $\mathcal{D}_{1}^{\perp}(\mathsf{P}_{c})$ consisting of discrete measures supported on orthogonal families of projections and integrating the trace to one.

\begin{proposition}\label{P:consequences}
Given $\mu_{0},\mu_{1} \in \mathcal{D}_{1}^{\perp}(\mathsf{P}_{c})$ such that 
with $W_{p}^{\mathsf{P}_{c}}(\mu_{0},\mu_{1}) < \infty$. 
Let $\mu_{t}$ be any $W_{p}^{\mathsf{P}_{c}}$-geodesic provided from Proposition \ref{P:D_{1}convex} 
and assume $\mu_{t} \in \mathcal{D}_{1}^{\perp}(\mathsf{P}_{c})$  for all $t \in [0,1]$. 

Then there exists a bijective map $T: \supp(\mu_{0}) \to \supp(\mu_{1})$ 
such that $(Id,T)_{\sharp}\mu_{0} \in \Pi_{\sfd}(\mu_{0}, \mu_{1})$ 
is an optimal plan. In particular, if  
$\mu_{0} = \sum_{i}\alpha_{i} P_{0,i}$, $\mu_{1} = \sum_{i}\beta_{i} P_{1,i}$
with $\alpha_{i} > \alpha_{i+1}$ and $\beta_{i} > \beta_{i+1}$, 
then
$$
\tr (P_{0,i}) = \tr (P_{1,i}), \quad \alpha_{i} = \beta_{i},
$$
and $T(P_{0,i}) = P_{1,i}$.
\end{proposition}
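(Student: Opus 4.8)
The plan is to exploit the rigidity forced by requiring that the whole geodesic $(\mu_t)_{t\in[0,1]}$ stay inside $\mathcal{D}_1^{\perp}(\mathsf{P}_c)$, i.e. remain supported on mutually orthogonal projections. Recall from the construction in Proposition \ref{P:D_{1}convex} that the geodesic arises from an optimal plan $\nu = \sum_{i,j}\theta_{i,j}\,\delta_{P_{0,i}}\otimes\delta_{P_{1,j}}$ between $\tr(\cdot)\mu_0$ and $\tr(\cdot)\mu_1$, by choosing for each pair $(i,j)$ with $\theta_{i,j}>0$ a geodesic $\gamma_{i,j}$ of $(\mathsf{P}_c,\sfd)$ with $\gamma_{i,j}(0)=P_{0,i}$ and $\gamma_{i,j}(1)=P_{1,j}$, and setting $\mu_t=(\ee_t)_\sharp\boldsymbol{\gamma}$ with $\boldsymbol{\gamma}=\sum_{i,j}\frac{\theta_{i,j}}{\tr(P_{0,i})}\delta_{\gamma_{i,j}}$.

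First I would show that $\theta$ is a (partial) matching: for each $i$ there is at most one $j$ with $\theta_{i,j}>0$, and symmetrically at most one $i$ for each $j$. Suppose $\theta_{i,j}>0$ and $\theta_{i,j'}>0$ with $j\neq j'$. Then $\gamma_{i,j}$ and $\gamma_{i,j'}$ are two geodesics issued from the same point $P_{0,i}$ with distinct endpoints $P_{1,j}\neq P_{1,j'}$; by uniqueness of geodesics they have distinct initial velocities, hence $\gamma_{i,j}(t)\neq\gamma_{i,j'}(t)$ for all small $t>0$. Consider the continuous function $t\mapsto \tr\big(\gamma_{i,j}(t)\,\gamma_{i,j'}(t)\big)$; at $t=0$ it equals $\tr(P_{0,i})\geq 1$, so it stays strictly positive for $t$ in a right-neighbourhood of $0$. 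But both $\gamma_{i,j}(t)$ and $\gamma_{i,j'}(t)$ lie in $\supp(\mu_t)$ and are distinct, so the assumption $\mu_t\in\mathcal{D}_1^{\perp}(\mathsf{P}_c)$ forces $\tr(\gamma_{i,j}(t)\gamma_{i,j'}(t))=0$, a contradiction. The symmetric statement follows by the same argument run near $t=1$, where the geodesics terminate at the common point $P_{1,j}$.

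Next I would upgrade the matching to a bijection. Since $P_{0,i},P_{1,j}\in\mathsf{P}_c$ are genuine projections lying in the supports, their masses $\alpha_i,\beta_j$ and their traces are strictly positive, so the row marginal $\sum_j\theta_{i,j}=\alpha_i\tr(P_{0,i})>0$ is nonzero for every $i$, and likewise every column marginal; together with the previous step this means each row and each column of $(\theta_{i,j})$ has exactly one nonzero entry. This produces a bijection $\sigma$ and the map $T(P_{0,i}):=P_{1,\sigma(i)}$, for which $(\mathrm{Id},T)_\sharp\big(\tr(\cdot)\mu_0\big)$ coincides with $\nu$ and is therefore optimal. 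Admissibility of $\nu$ gives $\tr(P_{0,i})=\tr(P_{1,\sigma(i)})$ for the matched pairs, and comparing the two marginal identities $\theta_{i,\sigma(i)}=\alpha_i\tr(P_{0,i})=\beta_{\sigma(i)}\tr(P_{1,\sigma(i)})$ then yields $\alpha_i=\beta_{\sigma(i)}$.

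Finally, under the additional hypothesis $\alpha_i>\alpha_{i+1}$ and $\beta_i>\beta_{i+1}$, I would conclude. The identity $\alpha_i=\beta_{\sigma(i)}$ shows that $\sigma$ carries the strictly decreasing sequence $(\alpha_i)$ onto the set $\{\beta_j\}$; since $(\beta_j)$ is itself strictly decreasing, the two sequences must agree term by term, $\alpha_i=\beta_i$, and injectivity of $(\beta_j)$ forces $\sigma=\mathrm{id}$. Hence $T(P_{0,i})=P_{1,i}$ and $\tr(P_{0,i})=\tr(P_{1,i})$, completing the argument. I expect the genuine obstacle to be the second step, the orthogonality-along-the-geodesic argument: one must rule out that distinct geodesics emanating from a shared projection can stay mutually orthogonal, and the mechanism that breaks this is the continuity of the Hilbert--Schmidt pairing $\tr(PQ)$ near the common endpoint, combined with the uniqueness of geodesics of $(\mathsf{P}_c,\sfd)$ guaranteeing that the relevant points are genuinely distinct for small times.
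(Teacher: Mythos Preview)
Your argument is correct and follows essentially the same strategy as the paper: exploit the dichotomy ``equal or orthogonal'' along the geodesic together with continuity of $t\mapsto \tr(\gamma(t)\gamma'(t))$ to rule out any splitting or merging in the plan, forcing a bijection. The only cosmetic difference is that the paper phrases the contradiction via connectedness of $[0,1]$ (the sets $\{t:\gamma_{i,j}(t)=\gamma_{i,j'}(t)\}$ and $\{t:\tr(\gamma_{i,j}(t)\gamma_{i,j'}(t))=0\}$ are closed, disjoint, nonempty, and cover $[0,1]$), which makes your appeal to ODE uniqueness of geodesics for local distinctness unnecessary, though your version is equally valid.
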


\begin{proof}
From the classical theory of optimal transport applied to each connected component of $\mathsf{P}_{c}$, $\tr(\cdot)\mu_{t} = (\ee_{t})_{\sharp}(\boldsymbol{\gamma})$ with 
$\boldsymbol{\gamma} \in \mathcal{P}(\Geo(\mathsf{P}_{c}))$.
Hence $(\ee_{0},\ee_{1})_{\sharp}\boldsymbol{\gamma} \in \Pi_{\sfd}(\tr(\cdot)\mu_{\varphi},\tr(\cdot)\mu_{\psi})$.  
Posing $\nu = (\ee_{0},\ee_{1})_{\sharp}\boldsymbol{\gamma}$, necessarily 
$$
\nu = \sum_{i,j} \theta_{i,j} \delta_{P_{V_{i}}}\otimes\delta_{P_{Z_{j}}}, \qquad 
\sum_{i,j} \theta_{i,j} = 1, \quad \theta_{i,j} \geq 0.
$$
Assume now by contradiction there exist $i_{1} \neq i_{2}$ and $j \in \N$ 
such that both $\theta_{i_{1},j}, \theta_{i_{2},j} > 0$. 
Then there exists $\gamma_{i_{1},j}, \gamma_{i_{2},j} \in \Geo(\mathsf{P}_{c})$ such that 
$$
\gamma_{i_{1},j}(t),\gamma_{i_{2},j}(t) \in \supp(\mu_{t}),
\quad \gamma_{i_{1},j}(0) = P_{V_{i_{1}}},\gamma_{i_{2},j}(0) = P_{V_{i_{2}}}, 
\quad \gamma_{i_{1},j}(1) = P_{Z_{j}},\gamma_{i_{2},j}(1) = P_{Z_{j}},
$$
with $P_{V_{i_{1}}} \perp P_{V_{i_{2}}}$. Then $\mu_{t} \in \mathcal{D}_{1}^{\perp}(\mathsf{P}_{c})$
 implies that either 
$\gamma_{i_{1},j}(t)= \gamma_{i_{2},j}(t)$ or $\tr(\gamma_{i_{1},j}(t)\gamma_{i_{2},j}(t)) = 0$ 
with the former verified at $t = 0$ and the latter at $t = 1$;
continuity of $t \mapsto \gamma_{i_{1},j}(t),\gamma_{i_{2},j}(t)$ gives a contradiction.

The argument can be reverted and implies that for each $i \in \N$ there is only one $j \in \N$
such that $\theta_{i,j} > 0$ and for each $j \in \N$ there is only one  $i \in \N$ such that 
$\theta_{i,j} > 0$: this is equivalent to the existence of a bijective 
map $T : \supp(\mu_{\varphi}) \to \supp(\mu_{\psi})$ such that 
$$
\nu = (\operatorname{Id} \times T)_{\sharp} \mu_{0}, 
$$
proving the first part of the claim. The remaining claims are straightforward 
consequences.
\end{proof}

\begin{corollary}\label{C:geo-conv}
Fix $p\geq 1$. 
Given $\varphi,\psi \in \mathcal{S}_{n}(\mathsf{B}(\mathsf{H}))$
with $W_{p}(\varphi,\psi) < \infty$,
consider $\mu_{\varphi},\mu_{\psi}$ and any $\mu_{t}$ from Proposition \ref{P:D_{1}convex}. 
If $\mu_{t} \in \mathcal{D}_{1}^{\perp}(\mathsf{P}_{c})$, then posing
$$
\rho_{t} : = \Psi(\mu_{t}) \in \mathcal{C}(\mathsf{H}), 
$$
the curve of normal state $[0,1] \ni t\mapsto \varphi_{\rho_{t}}$ is a $W_{p}$-geodesic.
\end{corollary}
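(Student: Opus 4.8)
The plan is to reduce the statement to the $W_p^{\mathsf{P}_c}$-geodesic property already established in Proposition \ref{P:D_{1}convex}. The only genuine difficulty is that $W_p$ is defined through the \emph{repetition-free} spectral measure $\Phi$, whereas the interpolating curve $\mu_t$ a priori lives in the larger set $\mathcal{D}_1^\perp(\mathsf{P}_c)$, where eigenvalues may repeat. Hence the heart of the matter is to prove the identity $\Phi(\varphi_{\rho_t}) = \mu_t$ for every $t$, after which the geodesic property transfers verbatim.

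First I would invoke Proposition \ref{P:consequences} with $\mu_0 = \mu_\varphi = \Phi(\varphi)$ and $\mu_1 = \mu_\psi = \Phi(\psi)$. These are the canonical spectral measures, so they belong to $\mathcal{D}_1^\perp(\mathsf{P}_c)$ and, crucially, have \emph{distinct} eigenvalues. Under the standing hypothesis $\mu_t \in \mathcal{D}_1^\perp(\mathsf{P}_c)$, the proposition provides the rigid structure: ordering the eigenvalues strictly decreasingly and writing $\mu_\varphi = \sum_i \alpha_i \delta_{P_{0,i}}$, $\mu_\psi = \sum_i \beta_i \delta_{P_{1,i}}$, one gets $\alpha_i = \beta_i$, $\tr(P_{0,i}) = \tr(P_{1,i})$, and an optimal plan induced by the bijection $T(P_{0,i}) = P_{1,i}$.

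Next I would unwind the construction of Proposition \ref{P:D_{1}convex} for this particular plan. Since $\nu$ is concentrated on the graph of $T$, the measure $\boldsymbol{\gamma}$ on $\Geo(\mathsf{P}_c)$ collapses to $\boldsymbol{\gamma} = \sum_i \alpha_i \delta_{\gamma_i}$, where $\gamma_i$ is a geodesic from $P_{0,i}$ to $P_{1,i}$, staying in a single connected component because $\tr(P_{0,i}) = \tr(P_{1,i})$. Consequently $\mu_t = (\ee_t)_\sharp \boldsymbol{\gamma} = \sum_i \alpha_i \delta_{\gamma_i(t)}$ and $\rho_t = \Psi(\mu_t) = \sum_i \alpha_i \gamma_i(t)$. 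The hypothesis $\mu_t \in \mathcal{D}_1^\perp(\mathsf{P}_c)$ forces the projections $\gamma_i(t)$ to be mutually orthogonal, while the $\alpha_i$ remain distinct by the previous step; therefore $\sum_i \alpha_i \gamma_i(t)$ is exactly the repetition-free spectral decomposition of $\rho_t$, that is $\Phi(\varphi_{\rho_t}) = \mu_t$.

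With this identity the conclusion is immediate: for all $s,t \in [0,1]$,
\begin{align*}
W_p(\varphi_{\rho_s}, \varphi_{\rho_t})
&= W_p^{\mathsf{P}_c}\big(\tr(\cdot)\Phi(\varphi_{\rho_s}), \tr(\cdot)\Phi(\varphi_{\rho_t})\big)
= W_p^{\mathsf{P}_c}\big(\tr(\cdot)\mu_s, \tr(\cdot)\mu_t\big) \\
&= |t-s|\, W_p^{\mathsf{P}_c}\big(\tr(\cdot)\mu_0, \tr(\cdot)\mu_1\big)
= |t-s|\, W_p(\varphi, \psi),
\end{align*}
the third equality being the $W_p^{\mathsf{P}_c}$-geodesic property of Proposition \ref{P:D_{1}convex}. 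I expect the main obstacle to be precisely the verification of $\Phi(\varphi_{\rho_t}) = \mu_t$: one must rule out that the interpolation secretly merges or splits eigenvalues, which is exactly what the rigidity of Proposition \ref{P:consequences} (distinctness of the $\alpha_i$ together with preserved orthogonality along the whole curve) guarantees.
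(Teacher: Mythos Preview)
Your proof is correct and follows essentially the same route as the paper: both invoke Proposition \ref{P:consequences} to obtain that the underlying optimal plan is induced by a bijection with the $\alpha_i$ strictly decreasing, deduce from this (together with the standing orthogonality hypothesis) that $\mu_t$ assigns distinct weights to pairwise orthogonal projections and hence coincides with the repetition-free spectral measure $\Phi(\varphi_{\rho_t})$, and then transfer the $W_p^{\mathsf{P}_c}$-geodesic property to $W_p$ verbatim. Your write-up is simply more explicit about unwinding the structure of $\mu_t$ as $\sum_i \alpha_i \delta_{\gamma_i(t)}$, which the paper leaves implicit.
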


\begin{proof}
To fix notation, $\mu_{t}$ from Proposition \ref{P:D_{1}convex}
can be then written as $\mu_{t} = \sum_{i} \alpha_{i}\delta_{P_{i}(t)}$.
Then using the notations of Section \ref{S:spectral measure}, 
$\Psi(\mu_{t}) = \sum_{i} \alpha_{i}P_{i}(t)$ is a 
well-defined element of $\mathcal{C}(\mathsf{H})$. 
From Proposition \ref{P:consequences} it follows that
$$
\Phi(\varphi_{\rho_{t}}) = \Phi(\Psi(\mu_{t})) = \mu_{t}.
$$
Notice indeed that $\mu_{t}$ is an element of 
$\mathcal{D}_{1}^{\perp}(\mathsf{P}_{c})$ giving different weights 
on each element of its support.
Hence, by definition of $W_{p}$ (recall \eqref{E:W_{p}})
\begin{align*}
W_{p}(\varphi_{\rho_{s}},\varphi_{\rho_{t}}) 
&~ = W_{p}^{\mathsf{P}_{c}}(\tr(\cdot) \Phi(\varphi_{\rho_{s}}), \tr(\cdot) \Phi(\varphi_{\rho_{t}})) \\
&~ = W_{p}^{\mathsf{P}_{c}}(\tr(\cdot) \mu_{s}, \tr(\cdot) \mu_{t}) \\
&~ = |t-s|W_{p}^{\mathsf{P}_{c}}(\tr(\cdot) \mu_{0}, \tr(\cdot) \mu_{1}) \\
&~ = |t-s|W_{p}(\varphi_{\rho_{0}},\varphi_{\rho_{1}}),
\end{align*}
proving the claim.
\end{proof}

\begin{remark}\label{R:curves}
If the condition $\mu_{t} \in \mathcal{D}_{1}^{\perp}(\mathsf{P}_{c})$ is not known, 
then one can anyway define a curve of normal states because
$\Psi(\mu_{t}) = \sum_{i} \alpha_{i}P_{i}(t) = : \rho_t$ is a 
well-defined element of $\mathcal{C}(\mathsf{H})$ implying that 
$\varphi_{\rho_{t}} \in \mathcal{S}_{n}(\mathsf{B(H)})$ (see Lemma \ref{L:normalsum}).
However, the spectral decomposition of ${\rho}_t$ will not be given by $\sum_{i} \alpha_{i}P_{i}(t)$
and
$$
\Phi(\varphi_{\rho_{t}}) = \Phi(\Psi(\mu_{t})) \neq \mu_{t},
$$
and nothing can be deduced on $W_{p}(\varphi_{\rho_{t}},\varphi_{\rho_{s}})$. 
\end{remark}

\begin{remark}\label{R:identification}
In the proof of Proposition \ref{P:consequences} it was not directly used the fact that 
 $\tr(\cdot)\mu_{t}$ is a $W_{p}^{\mathsf{P}_{c}}$-geodesic, rather that  
there exists $\boldsymbol{\gamma} \in \mathcal{P}(\Geo(\mathsf{P}_{c}))$ 
such that 
$$
\tr(\cdot)\mu_{t} = (\ee_{t})_{\sharp}\boldsymbol{\gamma}, 
\quad \mu_{t} \in\mathcal{D}_{1}^{\perp}(\mathsf{P}_{c}).
$$
This implies indeed that $\boldsymbol{\gamma}$ has to be a discrete measure as well 
and $t \mapsto \tr(\cdot)\mu_{t}$ is a $W_{p}^{\mathsf{P}_{c}}$-continuous,
this two facts being enough to close the argument.
\end{remark}

Proposition \ref{P:consequences} admits a partial converse.
\begin{proposition}\label{P:consequencesinvert}
Let $\varphi,\psi \in \mathcal{S}_{n}(\mathsf{B}(\mathsf{H}))$ be given states
with $W_{p}(\varphi,\psi) < \infty$. 
If there exists a bijective map $T: \supp(\mu_{\varphi}) \to \supp(\mu_{\psi})$ 
such that $(\operatorname{Id}\times T)_{\sharp}\mu_{\varphi} \in \Pi_{\sfd}(\mu_{\varphi}, \mu_{\psi})$ 
then $\varphi$ and $\psi$ are in the same unitary orbit. There is a unitary $u$ with $\rho_{\psi}=u\rho_{\varphi}u^*$.
\end{proposition}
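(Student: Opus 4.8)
The plan is to recover the entire nonzero spectral data of $\rho_{\varphi}$ and $\rho_{\psi}$ from the transport map $T$ and then assemble a unitary out of isometries between the matching eigenspaces. First I would fix the spectral notation: write the no-repetition decompositions $\rho_{\varphi} = \sum_{i}\lambda_{i}P_{V_{i}}$ and $\rho_{\psi} = \sum_{j}\mu_{j}P_{W_{j}}$, so that $\mu_{\varphi} = \Phi(\varphi) = \sum_{i}\lambda_{i}\delta_{P_{V_{i}}}$ and $\mu_{\psi} = \sum_{j}\mu_{j}\delta_{P_{W_{j}}}$, with $\supp(\mu_{\varphi}) = \{P_{V_{i}}\}$ and $\supp(\mu_{\psi}) = \{P_{W_{j}}\}$ discrete and closed (Remark \ref{R:support}). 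Since $T$ is a bijection of these two supports, it is encoded by a bijection $\sigma$ of the index sets with $T(P_{V_{i}}) = P_{W_{\sigma(i)}}$, and the admissible coupling, read in terms of the probability measures $\tr(\cdot)\mu_{\varphi}$ and $\tr(\cdot)\mu_{\psi}$, is the discrete measure $\sum_{i}\lambda_{i}\tr(P_{V_{i}})\,\delta_{(P_{V_{i}},\,P_{W_{\sigma(i)}})}$.

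Next I would extract the two constraints hidden in the membership in $\Pi_{\sfd}(\tr(\cdot)\mu_{\varphi},\tr(\cdot)\mu_{\psi})$. Admissibility, namely $\dim R(P) = \dim R(Q)$ for almost every $(P,Q)$, forces on each atom the identity $\dim V_{i} = \tr(P_{V_{i}}) = \tr(P_{W_{\sigma(i)}}) = \dim W_{\sigma(i)}$. The marginal condition on the second factor, $\sum_{i}\lambda_{i}\tr(P_{V_{i}})\,\delta_{P_{W_{\sigma(i)}}} = \tr(\cdot)\mu_{\psi}$, gives, upon matching the distinct atoms $P_{W_{j}}$, the equality $\lambda_{i}\tr(P_{V_{i}}) = \mu_{\sigma(i)}\tr(P_{W_{\sigma(i)}})$. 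Combining the two yields $\lambda_{i} = \mu_{\sigma(i)}$ for every $i$. Hence $\rho_{\varphi}$ and $\rho_{\psi}$ carry exactly the same nonzero eigenvalues with exactly the same multiplicities, the eigenvalue $\lambda_{i}$ sitting on $V_{i}$ on one side and on $W_{\sigma(i)}$ on the other.

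Finally I would build the unitary. For each $i$ choose a surjective isometry $u_{i}\colon V_{i}\to W_{\sigma(i)}$ (possible since $\dim V_{i} = \dim W_{\sigma(i)}$) and set $u_{0} = \bigoplus_{i}u_{i}$, a surjective isometry from $\overline{R(\rho_{\varphi})} = \overline{\bigoplus_{i}V_{i}}$ onto $\overline{R(\rho_{\psi})} = \overline{\bigoplus_{i}W_{\sigma(i)}}$. By construction $u_{0}P_{V_{i}}u_{0}^{*} = P_{W_{\sigma(i)}}$, whence $u_{0}\rho_{\varphi}u_{0}^{*} = \sum_{i}\lambda_{i}P_{W_{\sigma(i)}} = \rho_{\psi}$. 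To promote $u_{0}$ to a genuine unitary $u$ of all of $\mathsf{H}$ one extends it by an isometry $N(\rho_{\varphi})\to N(\rho_{\psi})$; since these spaces lie in the kernels, the extension does not alter $u\rho_{\varphi}u^{*} = \rho_{\psi}$, and then $\varphi$ and $\psi$ lie in the same unitary orbit.

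The main obstacle is precisely this last extension step, since it requires $\dim N(\rho_{\varphi}) = \dim N(\rho_{\psi})$. This is automatic when the common rank $\sum_{i}\dim V_{i}$ is finite, because the two kernels are then closed subspaces of $\mathsf{H}$ of equal (finite) codimension; in the genuinely infinite-rank case, however, the matching of the \emph{nonzero} spectral data alone does not by itself pin down the kernel dimension, so this is the point at which the argument must be carried out with the most care to guarantee that the isometry $N(\rho_{\varphi})\to N(\rho_{\psi})$ exists.
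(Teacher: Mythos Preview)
Your argument is essentially the paper's: read off from the marginal and admissibility constraints that $\rho_{\varphi}$ and $\rho_{\psi}$ share the same nonzero eigenvalues with the same multiplicities, assemble an isometry $\overline{R(\rho_{\varphi})}\to\overline{R(\rho_{\psi})}$ eigenspace by eigenspace, and extend across the kernels. The paper does the same thing, only more tersely, and then cites \cite{CIJM} for the final construction of $u$ after asserting $\overline{R(\rho_{\varphi})}\cong\overline{R(\rho_{\psi})}$ and $N(\rho_{\varphi})\cong N(\rho_{\psi})$.

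The obstacle you isolate at the end is not just a step requiring care; it is a genuine gap, and the paper's own proof shares it. The claim $N(\rho_{\varphi})\cong N(\rho_{\psi})$ does \emph{not} follow from the hypotheses in the infinite-rank case, and in fact the proposition as stated is false there. Take $\mathsf{H}=\ell^{2}$ with orthonormal basis $(e_{n})_{n\ge1}$, and put
\[
\rho_{\varphi}=\sum_{n\ge1}2^{-n}\,|e_{n}\rangle\langle e_{n}|,\qquad
\rho_{\psi}=\sum_{n\ge1}2^{-n}\,|e_{n+1}\rangle\langle e_{n+1}|.
\]
Both are density matrices with simple spectrum, so $\mu_{\varphi}=\sum_{n}2^{-n}\delta_{P_{e_{n}}}$ and $\mu_{\psi}=\sum_{n}2^{-n}\delta_{P_{e_{n+1}}}$ are supported in $\mathsf{P}_{1}$; hence $W_{p}(\varphi,\psi)<\infty$. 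The map $T(P_{e_{n}})=P_{e_{n+1}}$ is a bijection of supports, and $(\operatorname{Id}\times T)_{\sharp}\mu_{\varphi}=\sum_{n}2^{-n}\delta_{(P_{e_{n}},P_{e_{n+1}})}$ has marginals $\mu_{\varphi},\mu_{\psi}$ and sits entirely in $\mathsf{P}_{1}\times\mathsf{P}_{1}$, so it lies in $\Pi_{\sfd}(\mu_{\varphi},\mu_{\psi})$. Yet $N(\rho_{\varphi})=\{0\}$ while $N(\rho_{\psi})=\C e_{1}$, so no unitary $u$ can satisfy $\rho_{\psi}=u\rho_{\varphi}u^{*}$. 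Your instinct that this extension step cannot in general be carried out was therefore exactly right; the paper's passage from ``$V_{i}^{\perp}\cong T(V_{i})^{\perp}$ for each $i$'' to ``$N(\rho_{\varphi})\cong N(\rho_{\psi})$'' is the unjustified jump.
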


\begin{proof}
Let $\rho_\varphi= \sum_i \lambda_i P_{V_i}$ be the spectral decomposition with distinguished positive eigenvalues $\lambda_i$. The condition $(\operatorname{Id}\times T)_{\sharp}\mu_{\varphi} \in \Pi_{\sfd}(\mu_{\varphi}, \mu_{\psi})$ implies that the spectral decomposition of $\rho_{\psi}$ is: $\rho_{\psi}=\sum_i \lambda_i T(P_{V_i})$ with $P_{V_i}=R(P_{V_i})\cong R(T(P_{V_i}))=T(P_{V_i})$ (because we have a $\mathsf{d}$-transport plan). 
Identifying projections with subspaces we think $T$ defined on the collection of the eigenspaces of $\rho_{\varphi}$.
Every eigenspace is finite dimensional and this implies $V_i^{\bot} \cong T(V_i)^{\bot}$. 
Since these are mutually orthogonal 
we have $R(\rho_{\varphi})\cong R(\rho_{\psi})$ and $\operatorname{Ker}(\rho_{\varphi}) \cong \operatorname{Ker}(\rho_{\psi})$. 
By \cite[Proposition 7]{CIJM} we find an invertible $u \in \mathsf{B}(\mathsf{H})$ such that $\rho_{\psi}=u\rho_{\varphi}u^*$. In our case this $u$ is unitary.
Concretely let $\{e_i,e_j'\}$ an orthonormal system adapted to $\mathsf{H}=R(\rho_{\varphi})\oplus N(\rho_{\varphi})$ and $\{f_i,f_j'\}$ a corresponding one for $\rho_{\psi}$; then
$$u=\sum_i |f_i\rangle \langle e_i |+ \sum_j |f_j'\rangle \langle e_j'|.$$
We can do better finding a $u$ such that $u P_{V_i}u^*=T(P_{V_i})$ for every $i$. It suffices to choose the basis $\{e_i\}$ adapted to the spectral decomposition $\rho_\varphi= \sum_i \lambda_i P_{V_i}$. 
\end{proof}

\medskip
%-------------------------------------------------------------------------------------
%-------------------------------------------------------------------------------------

\section{Tensor product interpretation: a generalization}
\label{S:tensor-Wasserstein}

As specified in Section \ref{Ss:marginals},
the tensor product Hilbert space $\mathsf{H} \otimes \mathsf{H}$
corresponds, in quantum mechanics, to a composite system and a natural 
way to match two normal states $\varphi,\psi$ of $\mathsf{B}(\mathsf{H})$ 
would be via a normal 
state $\Xi \in \mathcal{S}_{n}(\mathsf{B}(\mathsf{H}\otimes \mathsf{H}))$ 
satisfying the partial trace conditions
$J^{1}_{\flat}\Xi = \varphi$ and $J^{2}_{\flat}\Xi = \psi$.

To fix notations we will use 
\begin{equation}\label{E:tracenormalstate}
\mathfrak{J}(\varphi, \psi) := 
\Big{\{} 
\Xi \in \mathcal{S}_{n}(\mathsf{B}(\mathsf{H}\otimes \mathsf{H}))
\colon J^{1}_{\flat}\Xi = \varphi, \ J^{2}_{\flat}\Xi = \psi
\Big{\}}.
\end{equation}

In this section we will reconcile this approach with the one we 
presented in Sections \ref{S:spectral measure} and \ref{S:Wasserstein}
 based on transport plans 
between spectral-projections measures. \\

We begin with some preliminaries.
We will follow \cite{Herbut}, 
and the appendix \ref{antilinearappendix} for basics on antilinear operators. 
Let $AHS(\mathsf{H})$ be the space of the antilinear 
Hilbert--Schmidt operators acting on $\mathsf{H}$. 
Firstly an antilinear operator is an additive operator 
$T:\mathsf{H} \to \mathsf{H}$ such that $T(\lambda x)=\overline{\lambda} x$. 
To define the Hilbert--Schmidt ones we begin with the antilinear rank-one operators. 
They are in the form $x \mapsto T_{\xi,\eta}(x):=\langle x, \xi \rangle \eta$ 
for fixed vectors $\xi,\eta \in \mathsf{H}$. 
On such operators we define the Hilbertian product (conjugate-linear in the first entry)
$\langle A,B \rangle:=\operatorname{tr}( A^*B)$ and we complete 
the linear span of all the operators in the form $T_{\xi,\eta}$ 
with respect to this Hilbert structure. If we compute 
$$
\langle T_{\xi,\eta}, T_{x,y}\rangle
=\operatorname{tr}(T_{\eta,\xi}T_{x,y})
=\operatorname{tr}\Big{(}\zeta \mapsto 
\langle x,\zeta \rangle \langle y,\eta \rangle \xi\Big{)}
=\langle \xi,x \rangle \langle y,\eta \rangle.
$$
On the right we have the inner product defined on $\mathsf{H}\otimes \mathsf{H}$ 
i.e. $\langle \xi \otimes \eta , x\otimes y\rangle = \langle \xi, x \rangle \langle \eta,y\rangle$
indeed we have a $\C$-linear isomorphism
\begin{equation}\label{isohilbertschmidt}
\Theta:\mathsf{H}\otimes \mathsf{H} \longrightarrow AHS(\mathsf{H})
\end{equation}
uniquely determined by linearity and continuity on simple tensors by
$\Theta_{\xi \otimes \eta} := T_{\xi, \eta}\in AHS(\mathsf{H})$.
Some basic identities are immediate to prove
\begin{equation}\label{thetaproperties}
\Theta_{\xi\otimes \eta}^*=\Theta_{\eta\otimes\xi}, \quad |\Theta_{\zeta}|^2=\operatorname{Tr}_2 |\zeta \rangle\langle \zeta | \quad \textrm{and} \quad 	|\Theta_{\zeta}^*|^2=\operatorname{Tr}_1 |\zeta \rangle\langle \zeta |,
\end{equation}
for $\zeta \in \mathsf{H}\otimes \mathsf{H}$.
\medskip
\noindent Let now
 $W:\mathsf{H}\to \mathsf{H}$ be a linear  (antilinear) partial isometry with initial space $R(W^*W)$ and final space $R(WW^*)$, 
then $W| :R(W^*W)\to R(WW^*)$ is a unitary (antiunitary) isomorphism. Let $\mathsf{P}_c(R(W^*W))\subset \mathsf{P}_c$ and  $\mathsf{P}_c(R(W^*W)\subset \mathsf{P}_c$ be the corresponding grassmannians
(recall $\mathsf{P}_c=\mathsf{P}_c(\mathsf{H})$).
This means that we are identifying
$$\mathsf{P}_c(R(W^*W)) \cong \big{\{}   P\in \mathsf{P}_c : P \leq W^*W\big{\}}$$
by taking ortogonal complements. The corresponding identification is understood for $WW^*$.
The adjoint action induces a diffeomorphism
 $$\widetilde{W}:\mathsf{P}_c(W^*W) \longrightarrow \mathsf{P}_c(WW^*), \quad \widetilde{W}(P): =\operatorname{Ad}_{W|}P=  (W|)P(W|)^*$$
 for  $ \ P\in \mathsf{P}_{c} \colon P\leq  R(W^*W).
$
We define
\begin{equation}\nonumber
\mathcal{G}(\mathsf{H}):=
\left\{
\begin{array}{l}
(V,\phi,W)\colon V,W \subset \mathsf{H}\, \mbox{closed subspaces, }\phi:\mathsf{P}_c(V) \rightarrow \mathsf{P}_c(W) \mbox{ smooth map} \\
\mbox{preserving the connected components}
\end{array}
\right\}
\end{equation}

%------------------------------------------------------------------------------------
\subsection{Pure States as transport maps}\label{Ss:puretransport}
Let us consider a pure state 
$\omega_{\zeta} \in \mathcal{PS}_n(\mathsf{B}(\mathsf{H} \otimes \mathsf{H})) 
\subset \mathcal{S}_{n}(\mathsf{B}(\mathsf{H} \otimes \mathsf{H}))$ 
represented by a  vector $\zeta \in \mathsf{H}\otimes \mathsf{H}$ 
with $\|\zeta\|=1$ and reduced density matrices
$$
\rho_1=\operatorname{Tr}_2 |\zeta \rangle \langle \zeta |, \quad \textrm{and}\quad \rho_2=\operatorname{Tr}_1 |\zeta \rangle \langle \zeta|.
$$ 
We will associate to $\omega_{\zeta}$ a unique family of transport plans 
from the spectral-projection measures of $\varphi_{\rho_{1}}$ to the one of 
$\varphi_{\rho_{2}}$.

We write the polar decomposition (see the appendix \ref{appendix polar}) of the antilinear operator $\Theta_{\zeta}$ associated via \eqref{isohilbertschmidt} to $\zeta$. 
Thus $\Theta_{\zeta}=U_{\zeta}|\Theta_{\zeta}|=|\Theta^{*}_{\zeta}|U_{\zeta}$ and 
$|\Theta_{\zeta}^{*}|=U_{\zeta} |\Theta_{\zeta}|U_{\zeta}^{*}.$ 
By \eqref{thetaproperties} 
we see that $|\Theta_{\zeta}|=\rho_1^{1/2}$ and 
$|\Theta_{\zeta}^{*}|=\rho_2^{1/2}$. It follows 
\begin{equation}\label{conjugationr}
\Theta_{\zeta}=U_{\zeta}\,\rho_{1}^{1/2}, \quad   \Theta_{\zeta}=\rho_2^{1/2}\,U_{\zeta} \quad \textrm{and} \quad \rho_2=U_{\zeta}\, \rho_1\, U_{\zeta}^{*}.
\end{equation}
The antilinear partial isometry 
$U_{\zeta}:\mathsf{H}\longrightarrow \mathsf{H}$
is called {\em{correlation operator}} and restricts to an antiunitary  
isomorphism $\xymatrix{\overline{R}(\rho_1) \ar[r]^{\cong}&\overline{R}(\rho_2)}$. 
The correlation operator is uniquely specified if we add one of  the following equivalent conditions
$$N(U_{\zeta})=N(\rho_1)
\quad \textrm{and} \quad  U_{\zeta}^*U_{\zeta}\mathsf{H}=N(\rho_1)^{\bot}$$
that we will always consider being satisfied. \\

\noindent We have associated to $\zeta \in \mathsf{H}\otimes \mathsf{H}$ its marginals and an antilinear partial isometry intertwining them. 
In the following we won't use the map $\Upsilon$ in the next proposition, rather some kind of its measure theory version.

\begin{proposition}\label{Upsilon}
	The following map is well defined
	$$\Upsilon:\mathcal{P}\mathcal{S}_n(\mathsf{H}\otimes \mathsf{H}) \longrightarrow \mathcal{G}(\mathsf{H}), \quad \omega_{\zeta}\longmapsto \Big{(}R(U_{\zeta}^*U_{\zeta}), \widetilde{U_{\zeta}}, R(U_{\zeta}U_{\zeta}^*)\Big{)}.$$
	It has the property 
	$\Upsilon(\omega_{\zeta})=\Upsilon(\omega_{\eta}) \implies U_{\zeta}=U_{\eta}$ up to a phase i.e. $U_{\zeta}=\lambda U_{\eta}$ for some $\lambda \in \mathsf{U}(1)$.
\end{proposition}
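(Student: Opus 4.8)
The plan is to establish the two assertions separately: first that $\Upsilon$ descends to a well-defined map on pure states, and then the stated rigidity.

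For the \textbf{well-definedness}, recall that $\omega_\zeta = \omega_\eta$ precisely when $\eta = \lambda\zeta$ for some $\lambda \in \mathsf{U}(1)$, so it suffices to check $\Upsilon(\omega_{\lambda\zeta}) = \Upsilon(\omega_\zeta)$. Since the isomorphism $\Theta$ of \eqref{isohilbertschmidt} is $\mathbb{C}$-linear, $\Theta_{\lambda\zeta} = \lambda\Theta_\zeta$. A short computation with the sesquilinear form shows that for $|\lambda|=1$ the antilinear operators $\lambda\Theta_\zeta$ and $\Theta_\zeta$ have the same modulus, so $|\Theta_{\lambda\zeta}| = |\Theta_\zeta| = \rho_1^{1/2}$ and likewise $|\Theta_{\lambda\zeta}^*| = \rho_2^{1/2}$; in particular the reduced density matrices $\rho_1,\rho_2$ are unchanged, as they must be since $|\lambda\zeta\rangle\langle\lambda\zeta| = |\zeta\rangle\langle\zeta|$. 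Comparing $\Theta_{\lambda\zeta} = \lambda U_\zeta |\Theta_\zeta|$ with the polar decomposition $\Theta_{\lambda\zeta} = U_{\lambda\zeta}|\Theta_{\lambda\zeta}|$ and invoking uniqueness of the polar decomposition under the normalization $N(U_{\lambda\zeta}) = N(\rho_1)$, I would obtain $U_{\lambda\zeta} = \lambda U_\zeta$.

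It then remains to see that each of the three components of $\Upsilon$ is unaffected by the replacement $U_\zeta \mapsto \lambda U_\zeta$. The initial and final spaces are immediate, since $(\lambda U_\zeta)^*(\lambda U_\zeta) = U_\zeta^* U_\zeta$ and $(\lambda U_\zeta)(\lambda U_\zeta)^* = U_\zeta U_\zeta^*$, whence $R(U_{\lambda\zeta}^* U_{\lambda\zeta}) = R(U_\zeta^* U_\zeta)$ and similarly for the ranges. For the middle component I would compute $\widetilde{\lambda U_\zeta}$ directly: on the relevant ranges $U_\zeta|$ is antiunitary, so $(\lambda U_\zeta|)^{-1} = \lambda (U_\zeta|)^{-1}$, and pushing the two phases through the antilinear factor in $\operatorname{Ad}_{\lambda U_\zeta|}(P) = (\lambda U_\zeta|)\,P\,(\lambda U_\zeta|)^{-1}$ produces the factor $\lambda\overline{\lambda} = |\lambda|^2 = 1$. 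Hence $\widetilde{\lambda U_\zeta} = \widetilde{U_\zeta}$, and $\Upsilon(\omega_{\lambda\zeta}) = \Upsilon(\omega_\zeta)$; this phase cancellation is the one small point requiring care.

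For the \textbf{rigidity}, assume $\Upsilon(\omega_\zeta) = \Upsilon(\omega_\eta)$. Then $U_\zeta$ and $U_\eta$ share the same initial space $V$ and final space $W$, and their induced maps satisfy $\widetilde{U_\zeta} = \widetilde{U_\eta}$ on $\mathsf{P}_c(V)$. Writing $A := U_\zeta|$ and $B := U_\eta|$ for the two antiunitary isomorphisms $V \to W$ and evaluating the equality $A P A^* = B P B^*$ on rank-one projections $P = |x\rangle\langle x|/\|x\|^2$ with $x \in V$, I get $[Ax] = [Bx]$ as lines; since $A,B$ are isometric this means $Ax = c(x) Bx$ with $|c(x)| = 1$. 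The heart of the argument is to upgrade the pointwise phases $c(x)$ to a single global phase: I would form the linear unitary $C := B^{-1}A : V \to V$ (a composition of two antilinear maps, hence $\mathbb{C}$-linear) and observe that $Cx = \overline{c(x)}\,x$, so that every vector of $V$ is an eigenvector of $C$. A standard linear-algebra argument (comparing eigenvalues on $x$, $y$ and $x+y$ for independent $x,y$) then forces $C = \mu\operatorname{Id}$ for some $\mu \in \mathsf{U}(1)$, whence $A = \overline{\mu}\,B$ on $V$; as both $U_\zeta$ and $U_\eta$ vanish on $V^\perp$ this yields $U_\zeta = \overline{\mu}\,U_\eta$ on all of $\mathsf{H}$, i.e. equality up to a phase. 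I expect this Wigner-type rigidity step to be the main obstacle, together with the consistent bookkeeping of antilinear adjoints and inverses that underlies both parts of the proof.
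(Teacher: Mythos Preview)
Your proof is correct and follows essentially the same route as the paper's. For well-definedness both you and the paper compute $U_{\lambda\zeta}=\lambda U_\zeta$ and check that the conjugation map is phase-invariant; for the rigidity both arguments evaluate $\widetilde{U_\zeta}=\widetilde{U_\eta}$ on rank-one projections to get $U_\zeta x$ and $U_\eta x$ proportional with a unimodular factor, and then show that factor is constant. The only cosmetic difference is that the paper asserts constancy of the phase ``by the antilinearity of our isometries'' (i.e.\ additivity plus independence of $U_\eta x,U_\eta y$), whereas you package the same step more cleanly by forming the \emph{linear} unitary $C=B^{-1}A$ and invoking the ``every vector is an eigenvector $\Rightarrow$ scalar'' lemma; this makes the antilinear bookkeeping transparent but does not change the substance.
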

\begin{proof}
If $\zeta$ is changed into $\lambda \zeta$ for a phase $\lambda\in \mathsf{U}(1)$ then $U_{\lambda \zeta}=\lambda U_{\zeta}$ and $\widetilde{U_{\lambda \zeta}}=\widetilde{U_{\zeta}}$. The map is well defined at the states level. Assume now that $\Upsilon(\omega_{\zeta})=\Upsilon(\omega_{\eta})$; then $U_{\zeta}$ and $U_{\eta}$ have the same initial and final space. Let $x\in R(U_{\zeta}^*U_{\zeta})$ be a unit vector. Evaluating on the rank-one projections 
$$
\widetilde{U_{\zeta}}\big{(}|x\rangle \langle x|\big{)}=|U_{\zeta}x\rangle \langle U_{\zeta}x|=\widetilde{U_{\eta}}\big{(}|x\rangle \langle x|\big{)}=|U_{\eta}x\rangle \langle U_{\eta}x|.
$$
Evaluate again on the vector $U_{\zeta}x$ to obtain $U_{\zeta}x= \langle U_{\eta} |U_{\zeta}x \rangle U_{\eta}x$. By computing the norm we find $|\langle U_{\eta}x |U_{\zeta}x \rangle |=1$. Cauchy--Schwartz implies $U_{\zeta}x =f(x)U_{\eta}x$ for every unit vector $x$ (in the initial support of the involved isometries) where $f$ is a map from the unit sphere of the initial support to $\mathsf{U}(1)$.
But $f$ has to be constant by the antilinearity of our isometries.
\end{proof}
Given two sets $A,B$ we denote $\operatorname{Bij}(A,B)$ the set of Bijections from $A$ to $B$
and similarly to before we define a set of triples 
\begin{equation}
\mathcal{M}(\mathsf{H}):=
\Big{\{}
(\varphi_1,F,\varphi_2)\colon \varphi_1,\varphi_2 \in \mathcal{S}_n(\mathsf{B}(\mathsf{H})), \, \, F \in \operatorname{Bij}(\Lambda_{\varphi_1}^{\bot},\Lambda_{\varphi_2}^{\bot})
\Big{\}}.
\end{equation}
then we have a map
$$
\begin{array}{l}
\Phi_{\otimes}:\mathcal{P}\mathcal{S}_n(\mathsf{B}(\mathsf{H}\otimes \mathsf{H})) \longrightarrow   \mathcal{M}(\mathsf{H})	\\
\omega_{\zeta} \longmapsto  \Big{(}\varphi_1, (\widetilde{U_{\zeta}})_{\sharp},\varphi_2\Big{)} \\
 \\
\mbox{marginals}\, \varphi_1, \varphi_2\,  \\
\varphi_1=\varphi_{\rho_1},\, \varphi_2=\varphi_{\rho_2}  \\
\rho_1=\operatorname{Tr_2} |\zeta \rangle \langle \zeta |, \, \rho_2=\operatorname{Tr}_1 |\zeta \rangle \langle \zeta |  \\
\Theta_{\zeta}=U_{\zeta}\rho_1^{1/2}  
\end{array}
$$
Recall that $J^1_{\flat}$ is the map on $\mathcal{S}_n(\mathsf{B}(\mathsf{H}\otimes \mathsf{H}))$ that takes the first  marginal. In the following we are going to omit the identification $\mathcal{C}(\mathsf{H}) \cong \mathcal{S}_n(\mathsf{B}(\mathsf{H}))$. In particular the integration map $\Psi$ will be considered as a map $\Psi: \mathcal{D}_1^{\bot}(\mathsf{P}_c) \longrightarrow \mathcal{S}_n(\mathsf{B}(\mathsf{H}))$.
\begin{definition}
Let $F:\mathcal{P}\mathcal{S}_n(\mathsf{B}(\mathsf{H}\otimes\mathsf{H})) \longrightarrow \mathcal{D}_1^{\bot}(\mathsf{P}_c)$ be a map. 
We say that $F$ is compatible with the first marginal if 
$\Psi (F(\omega))=\rho(J^1_{\flat}(\omega))$ for any 
$\omega \in \mathcal{P}\mathcal{S}_n(\mathsf{B}(\mathsf{H}\otimes\mathsf{H})).$ 
This means that the following diagram commutes
$$
\xymatrix{\mathcal{P}\mathcal{S}_n(\mathsf{B}(\mathsf{H}\otimes\mathsf{H})) \ar[r]^-F \ar[d]_{J^1_{\flat}}& \mathcal{D}_1^{\bot}(\mathsf{P}_c) \ar[dl]^{\Psi}\\
\mathcal{S}_n(\mathsf{B}(\mathsf{H}))
 & {}}
$$
\end{definition}
Of course as a particular example we can take the map $\mathcal{F}$ obtained by the composition
\begin{equation}\label{definitionofF}
\xymatrix{\mathcal{P}\mathcal{S}_n(\mathsf{B}(\mathsf{H}\otimes \mathsf{H})) \ar[r]^{J^1_{\flat}}& \mathcal{S}_n(\mathsf{B}(\mathsf{H}))\ar[r]^{\Phi}&\mathcal{D}_1^{\bot}(\mathsf{P}_c)
.}
\end{equation}
\begin{theorem}\label{T:injection}
The map $\Phi_{\otimes}$ 	is well defined and injective. Fix any $\varphi \in \mathcal{S}_n(\mathsf{B}(\mathsf{H}))$ and a 
 measure $\mu \in \Lambda_{\varphi}^{\bot}$ representing $\varphi$; then the $\mu$-``component" of $\Phi_{\otimes}$ provides a map 
$$
\Phi_{\otimes}^{\mu}:
\Big{\{}\omega \in 
\mathcal{P}\mathcal{S}_n(\mathsf{B}(\mathsf{H}\otimes \mathsf{H})) 
\colon J^1_{\flat}\omega =\varphi \Big{\}}
 \longrightarrow \mathcal{D}(\mathsf{P}_c \times \mathsf{P}_c)
 , \quad \Phi_{\otimes}^{\mu}(\omega_{\zeta})=(\operatorname{Id} \times \widetilde{U_{\zeta}})_{\sharp}(\operatorname{tr}(\cdot)\mu).
$$
This map is valued in the set of admissible transport plans 
$$
\Phi_{\otimes}^{\mu}(\omega_{\zeta})\in \Pi_d\big{(}\operatorname{tr}(\cdot) \mu, (\widetilde{U}_{\zeta})_{\sharp}(\operatorname{tr}(\cdot) \mu)\big{)}.
$$
 In a similar way a map $F$ which is compatible with the first marginal can be combined with $\Phi_{\otimes}$ to the map
$$
\Phi_{\otimes}^{F}:\mathcal{P}\mathcal{S}_n(\mathsf{B}(\mathsf{H} \otimes \mathsf{H}))\longrightarrow \mathcal{D}_1^{\bot}(\mathsf{P}_c \times \mathsf{P}_c), \quad \Phi_{\otimes}^F(\omega):=(\operatorname{Id}\times \widetilde{U_{\zeta}})_{\sharp}(F(\omega)).
$$
We have a compatibility property expressed by the commutative diagram
$$
\xymatrix{
\mathcal{P}\mathcal{S}_n(\mathsf{B}(\mathsf{H}\otimes
\mathsf{H}))\ar[dr]^{F}\ar[d]_{J^1_{\flat}} \ar[r]^{\Phi_{\otimes}^F}& \mathcal{D}_1^{\bot}(\mathsf{P}_c \times \mathsf{P}_c) \ar[d]^{\pi_{\sharp}^1}\\
\mathcal{S}_n(\mathsf{B}(\mathsf{H})) & \mathcal{D}_1^{\bot}(\mathsf{P}_c)\ar[l]^{\Psi}
.}
$$
When $F=\mathcal{F}$ as before (eq. \eqref{definitionofF}) this becomes a compatibility with $\Phi$ as the diagram
$$
 \xymatrix{
 \mathcal{P}\mathcal{S}_n(\mathsf{B}(\mathsf{H}\otimes
\mathsf{H}))\ar[d]_{J^1_{\flat}} \ar[r]^{\Phi_{\otimes}^F}& \mathcal{D}_1^{\bot}(\mathsf{P}_c \times \mathsf{P}_c) \ar[d]^{\pi_{\sharp}^1}\\
\mathcal{S}_n(\mathsf{B}(\mathsf{H})) \ar[r]^{\Phi}& \mathcal{D}_1^{\bot}(\mathsf{P}_c)
}
$$
commutes.
\end{theorem}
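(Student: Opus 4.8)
The plan is to break Theorem \ref{T:injection} into three groups of assertions and treat them in increasing order of difficulty: (i) $\Phi_{\otimes}$ is well defined, (ii) $\Phi_{\otimes}$ is injective, and (iii) the auxiliary maps $\Phi_{\otimes}^{\mu}$ and $\Phi_{\otimes}^{F}$ land in admissible transport plans and fit into the stated commutative diagrams. Throughout I would exploit the correlation operator $U_{\zeta}$ and the relations \eqref{conjugationr}, together with Proposition \ref{Upsilon}.

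\textbf{Well-definedness.} For (i) I would first record that the marginals $\varphi_{1}=\varphi_{\rho_{1}}$ and $\varphi_{2}=\varphi_{\rho_{2}}$ are genuine normal states, since $\rho_{1}=\operatorname{Tr}_{2}|\zeta\rangle\langle\zeta|$ and $\rho_{2}=\operatorname{Tr}_{1}|\zeta\rangle\langle\zeta|$ are positive, trace-class and of unit trace by the properties of the partial trace in Section \ref{Ss:marginals}. Independence of the representative $\zeta$ is immediate from Proposition \ref{Upsilon}, which gives $\widetilde{U_{\lambda\zeta}}=\widetilde{U_{\zeta}}$ for every phase $\lambda$, so that the third component $(\widetilde{U_{\zeta}})_{\sharp}$ is unchanged. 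That $(\widetilde{U_{\zeta}})_{\sharp}$ is a bijection $\Lambda_{\varphi_{1}}^{\bot}\to\Lambda_{\varphi_{2}}^{\bot}$ follows from $\rho_{2}=U_{\zeta}\rho_{1}U_{\zeta}^{*}$ in \eqref{conjugationr}: writing $\mu=\sum_{i}\lambda_{i}\delta_{P_{i}}\in\Lambda_{\varphi_{1}}^{\bot}$ with $\sum_{i}\lambda_{i}P_{i}=\rho_{1}$, the map $\widetilde{U_{\zeta}}$ is conjugation by the antiunitary $U_{\zeta}|$, hence preserves trace and orthogonality, so $(\widetilde{U_{\zeta}})_{\sharp}\mu$ is supported on pairwise orthogonal projections and $\sum_{i}\lambda_{i}\widetilde{U_{\zeta}}(P_{i})=U_{\zeta}\rho_{1}U_{\zeta}^{*}=\rho_{2}$, so $(\widetilde{U_{\zeta}})_{\sharp}\mu\in\Lambda_{\varphi_{2}}^{\bot}$; the inverse is $(\widetilde{U_{\zeta}}^{-1})_{\sharp}$.

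\textbf{Injectivity (the main obstacle).} Suppose $\Phi_{\otimes}(\omega_{\zeta})=\Phi_{\otimes}(\omega_{\eta})$. Then $\rho_{1},\rho_{2}$ agree for $\zeta$ and $\eta$, so the initial and final spaces $N(\rho_{1})^{\bot}$ and $N(\rho_{2})^{\bot}$ of $U_{\zeta}$ and $U_{\eta}$ coincide, and $(\widetilde{U_{\zeta}})_{\sharp}=(\widetilde{U_{\eta}})_{\sharp}$ as maps on $\Lambda_{\varphi_{1}}^{\bot}$. My plan is to upgrade this equality of \emph{pushforward} maps to the pointwise equality $\widetilde{U_{\zeta}}=\widetilde{U_{\eta}}$ of maps on projections; then $\Upsilon(\omega_{\zeta})=\Upsilon(\omega_{\eta})$ and Proposition \ref{Upsilon} gives $U_{\zeta}=\lambda U_{\eta}$ for a phase $\lambda$, whence by \eqref{conjugationr} $\Theta_{\zeta}=\lambda\Theta_{\eta}$, so $\zeta=\lambda\eta$ and $\omega_{\zeta}=\omega_{\eta}$. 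The delicate step is exactly this upgrade. Setting $R:=(U_{\eta}|)^{-1}(U_{\zeta}|)$, a unitary preserving each eigenspace $V$ of $\rho_{1}$, equality of the two pushforwards forces, for \emph{every} orthonormal decomposition of $V$ into rank-one projections $\{|e_{k}\rangle\langle e_{k}|\}$, the image multisets $\{|U_{\zeta}e_{k}\rangle\langle U_{\zeta}e_{k}|\}$ and $\{|U_{\eta}e_{k}\rangle\langle U_{\eta}e_{k}|\}$ to coincide (all weights within $V$ equal the common eigenvalue), i.e. $R$ permutes the rank-one projections of every orthonormal basis of $V$. The key linear-algebra fact, which I would isolate as a lemma, is that a unitary with this property must be a scalar on $V$: testing against a sufficiently generic orthonormal basis rules out, for instance, a basis swap that would preserve the image multiset for some but not all decompositions. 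Granting this, $R$ is scalar on each eigenspace, hence $\widetilde{U_{\zeta}}=\widetilde{U_{\eta}}$, and we conclude.

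\textbf{Transport plans and diagrams.} For (iii) the assertions are formal once (i) holds. Since $\widetilde{U_{\zeta}}$ preserves the rank of projections, for $\mu\in\Lambda_{\varphi}^{\bot}$ the measure $(\operatorname{Id}\times\widetilde{U_{\zeta}})_{\sharp}(\operatorname{tr}(\cdot)\mu)$ has first marginal $\operatorname{tr}(\cdot)\mu$, second marginal $(\widetilde{U}_{\zeta})_{\sharp}(\operatorname{tr}(\cdot)\mu)$, and is concentrated on pairs $(P,\widetilde{U_{\zeta}}(P))$ with $\dim R(P)=\dim R(\widetilde{U_{\zeta}}(P))$, hence lies in $\Pi_{d}(\operatorname{tr}(\cdot)\mu,(\widetilde{U}_{\zeta})_{\sharp}(\operatorname{tr}(\cdot)\mu))$, giving the claim for $\Phi_{\otimes}^{\mu}$. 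For $\Phi_{\otimes}^{F}$, the identity $\pi^{1}_{\sharp}(\operatorname{Id}\times\widetilde{U_{\zeta}})_{\sharp}(F(\omega))=F(\omega)$ yields commutativity of the upper part of the diagram, while the compatibility hypothesis $\Psi(F(\omega))=\rho(J^{1}_{\flat}\omega)$ yields the lower part; specialising to $F=\mathcal{F}=\Phi\circ J^{1}_{\flat}$ and using the left-inverse property $\Psi\circ\Phi=\operatorname{Id}$ from Section \ref{S:spectral measure} turns the bottom edge into $\Phi$ and produces the final diagram.
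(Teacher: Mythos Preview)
Your argument follows the same route as the paper's: well-definedness via Proposition~\ref{Upsilon} and the conjugation relation \eqref{conjugationr}, injectivity by reducing to $U_{\zeta}=\lambda U_{\eta}$ through rank-one spectral presentations, and the remaining claims about $\Phi_{\otimes}^{\mu}$, $\Phi_{\otimes}^{F}$ and the diagrams as formal consequences of rank preservation under $\widetilde{U_{\zeta}}$.

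The one place where you go beyond the paper is the injectivity step in the presence of degenerate eigenvalues. The paper's proof is terse there: it tests the equality of pushforwards on ``an arbitrary choice'' of rank-one presentation and asserts directly the existence of an orthonormal set $(e_i)$ with $|U_{\zeta}e_i\rangle\langle U_{\zeta}e_i|=|U_{\eta}e_i\rangle\langle U_{\eta}e_i|$, then invokes the argument of Proposition~\ref{Upsilon}. You instead make explicit what is implicit in that ``arbitrary choice'': introducing the linear unitary $R=(U_{\eta}|)^{-1}(U_{\zeta}|)$, observing that it preserves each eigenspace of $\rho_1$, and recording that equality of the pushforward on \emph{every} rank-one $\mu\in\Lambda_{\varphi_1}^{\perp}$ forces $R$ to permute the lines of every orthonormal basis of each eigenspace, hence to be scalar there. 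This lemma is elementary (two distinct eigenvalues $\lambda_1\neq\lambda_2$ would make $|\lambda_1\cos^2 t+\lambda_2\sin^2 t|<1$ for intermediate $t$, contradicting the permutation property), and it cleanly closes a step the paper leaves compressed; otherwise the two proofs coincide.
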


\begin{proof}
Among all the spectral measures associated to $\varphi_1$ there are those with all the projections of rank-one. Then starting from the assumption $\Phi_{\otimes} (\omega_{\zeta})=\Phi_{\otimes}(\omega_{\eta})$ and testing the equality $\widetilde{U_{\zeta}}=\widetilde{U_{\eta}}$ for an arbitrary choice of one of these rank-one presentations of spectral measures we get the existence of a ortonormal set of vectors $(e_i)_i$ spanning the initial domain of $U_{\zeta}$ and $U_{\eta}$ where $|U_{\zeta}e_i\rangle \langle U_{\zeta}e_i|=|U_{\eta}e_i \rangle \langle U_{\eta}e_i |$ for every $i$. As in the proof of Proposition \ref{Upsilon} $U_{\zeta}=\lambda U_{\eta}$ for a phase $\lambda$. The marginals now coincide and this means $\Theta_{\zeta}=\lambda \Theta_{\eta}$ which implies that the corresponding states are equal. The rest of the proof is straightforward. In particular notice we get admissible transport plans because at any instance the discrete measures are in the form $\sum_i \lambda_i P_{V_i}$ for an orthogonal family of finite rank projections and the transport maps are induced by antilinear partial isometries $U_{\zeta}$ with $P_{V_i} \leq U_{\zeta}^*U_{\zeta}$. This means that for every $i$ the points $P_{V_i}$ and $\widetilde{U_{\zeta}}(P_{V_i})$ belong to the same connected component.
\end{proof}
\begin{remark}
Of course the role of the marginals is symmetric. 
The flip automorphism 
$\mathsf{H}\otimes 	\mathsf{H} 
\longrightarrow \mathsf{H}\otimes 	\mathsf{H}$ 
that on simple tensors is defined by 
$\underline{x \otimes y}=y\otimes x$ 
induces an homeomorphism of the space of the states 
that switches the marginals.
One checks immediately 
$\Phi_{\otimes}(\underline{\omega_{{\zeta}}})=\Big{(}\varphi_2, (\widetilde{U_{\zeta}}^{-1})_{\sharp},\varphi_1\Big{)} .$
\end{remark}

\begin{remark}[Wasserstein Cost of pure states]\label{R:costpure}

After Theorem \ref{T:injection}, 
we can define a Wasserstein cost, depending on $p$, 
for any pure normal state of $\mathsf{B}(\mathsf{H\otimes H})$. 
In particular given 
$\varphi_{1}, \varphi_{2} \in \mathcal{S}_{n}(\mathsf{B(H)})$ 
and $\omega_{\zeta} \in \mathcal{PS}_n(\mathsf{B(H\otimes H)})$, 
for each $\mu \in \Lambda_{\varphi_{1}}^{\perp}$
we have the transport plan 
$\Phi_{\otimes}^{\mu}(\omega_{\zeta})$ 
(induced by the map $\widetilde U_{\zeta}$)
between admissible representations of $\varphi_{1}$ and $\varphi_{2}$ 
whose $p$-cost will be
$$
\int_{\mathsf{P}_{c} \times\mathsf{P}_{c} } \sfd^{p}(P,Q) \, 
\Phi_{\otimes}^{\mu}(\omega_{\zeta})(dPdQ) = 
\int_{\mathsf{P}_{c}} \sfd^{p}(P,\widetilde U_{\zeta}(P)) \tr(P)\,
\mu(dP).
$$
Hence, the cost of $\omega_{\zeta}$ 
will be given by taking the lowest possible cost among all 
$\Phi_{\otimes}^{\mu}(\omega_{\zeta})$: 
\begin{equation}\label{E:Costzeta}
\mathcal{C}_{p}(\omega_{\zeta})^{p} : = 
\inf_{\mu \in \Lambda_{\varphi_{1}}^{\perp}}
\int_{\mathsf{P}_{c}} \sfd^{p}(P,\widetilde U_{\zeta}(P)) \tr(P)\,
\mu(dP).
\end{equation}
Following Proposition \ref{P:attained}, it is equivalent to restrict 
the minimisation only among those $\mu$ concentrated inside $\mathsf{P}_{1}$. 
Moreover the $\inf$ is actually attained giving that 
there exists $\mu \in \mathsf{P}_{1}$, 
a priori not unique and depending on $p \geq 1$,  such that  
$$
\mathcal{C}_{p}(\omega_{\zeta})^{p} = 
\int_{\mathsf{P}_{c}} \sfd^{p}(P,\widetilde U_{\zeta}(P))\,
\mu(dP).
$$
Notice however that by construction, it is immediate to see  
that 
$$
\mathcal{C}_{p}(J^{1}_{\flat} \omega_{\zeta},J^{2}_{\flat} \omega_{\zeta}) \leq \mathcal{C}_{p}(\omega_{\zeta}).
$$
\end{remark}

%------------------------------------------------------------------------------------
%------------------------------------------------------------------------------------
\appendix

\section{Homogeneous spaces and principal bundles}\label{Pbundles}
 \subsubsection*{Homogeneous spaces and principal bundles}
Let $G$ be a group acting (say on the right) on 
a space $M$.
We usually denote this action with
 $x \cdot g$. Sometimes also the symbol $\mathcal{R}_g(x)=x\cdot g$ will be used.
 The action is {\em{free}} whenever $ x\cdot g=x$ for some $x \in M$ implies $g=e$. 
Assume that $G$ acts on
two spaces $M$ and $N$. A map $\varphi:M \longrightarrow N$ is equivariant if 
$$\varphi(x\cdot g)=\varphi(x) \cdot g, \quad \forall x \in M, \textrm{ and }g \in G.$$
\begin{definition}
Let $G$ be a Lie group. A homogeneous space is a manifold $M$ with a transitive left action of $G$.
\end{definition}
Given a closed subgroup $B\subset G$ we can prove that  the space of the left cosets $G/B$ is a manifold. The left action of $G$ on itself commutes with the right $B$-action so that it descends to a left transitive action on $G/B$. Thus $G/B$ is a basic example of a homogeneous space. On the other hand, let $M$ be a homogeneous space and fix a point $p\in M$. The stabiliser $I_p :=\big{\{}g\in G: \,g\cdot p=p \big{\}}$ is a closed subgroup. It is easy to prove that $M$ is equivariantly diffeomorphic to $G/I_p$. Therefore every homogeneous space is in the form $G/B$ with $B\subset G$ closed.

\begin{definition}(cfr \cite{Kobayashi}).
	Let $M$ be a manifold and $G$ a Lie group. A principal bundle over $M$ with structure group $G$ consists in a manifold $E$ with a right action of $G$ such that: 
	\begin{enumerate}
	\item The action is free and $M$ is the quotient space $E/G$ with smooth canonical projection $\pi:E\longrightarrow M$.
	\item The following local triviality of $E$ is satisfied: any point $x \in M$ has a neighborhood $U$ such that $\pi^{-1}(U)$ is isomorphic to $U\times G$. In the present context {\em{isomorphic}} means that we can find a diffeomorphism $\psi: \pi^{-1}(U) \to U\times G$ in the form $\psi(u)=(\pi(u),\varphi(u))$ for a smooth map $\varphi: \pi^{-1}(U) \to G$ satisfying $\varphi(u\cdot g)=\varphi(u)g$ for every $g \in G$.
	\end{enumerate}
To synthetize this definition we say that $E\longrightarrow M$ is a principal bundle.
\end{definition}
\begin{example}\label{homspace}Every homogeneous space
$G/B$ is the base of a principal bundle.
Indeed we can prove that $G\longrightarrow G/B$ is a principal bundle with structure group $B$. In particular the local triviality follows from the existence of local smooth sections of the projection. If we consider the left translation action of $G$ on itself we also see that the projection is equivariant.   
\end{example}
Let 
$\xymatrix{E\ar[r]^{\pi}&M}$ be a $G$-principal bundle. At every point $p\in E$, the vertical space $\mathcal{V}_p:=\operatorname{N}(d\pi:T_pE \to T_{\pi}M)\subset T_pE$ is the tangent space of the fiber. Using the $G$-action it can be canonically identified with the Lie algebra $\mathfrak{g}$ of $G$ in the following way: every $X\in \mathfrak{g}$ defines the 
{\em fundamental} vector field $\widetilde{X} \in \Gamma(TE)$ (sections of the tangent bundle) with $\widetilde{X}_p:=\dfrac{d}{dt}\bigg{|}_{t=0}p \cdot (\operatorname{exp}tX).$ Fundamental vector fields are of course vertical and at every point the map $\mathfrak{g} \longrightarrow \mathcal{V}_p$ given by $X \longmapsto \widetilde{X}_p$ is an isomorphism. However in general there is no preferred choice of {\em{horizontal}} subspaces of $TE$. This is extra structure amounts to a {\em{connection}}.
\begin{definition}\label{connections}
A connection on the principal bundle $E\to M$ is a smooth distribution $p \mapsto \mathscr{H}_p \subset T_pE$ of vector subspaces called {\em{horizontal}} with the properties:	
\begin{enumerate}
\item For every $p \in E$ we have $T_pE=\mathcal{V}_p \oplus \mathscr{H}_p$.
\item Invariance: for every $g\in G$ and $p\in E$ then $d\mathcal{R}_g \mathscr{H}_{p}=\mathscr{H}_{gp}$. 
\end{enumerate}
\end{definition}
A connection on $E$ provides us with a notion of horizontal curves and horizontal liftings of curves. Moreover given any representation $G\longrightarrow \operatorname{End}(V)$ on a vector space, a classical construction going under the name of {\em{associated bundle construction}} produces a vector bundle $W\to M$ having $V$ as typical fiber and the connection on $E$ induces a covariant derivative (in the usual meaning) on $W$. In particular this gives a covariant derivative in the tangent bundle $TM$ of the base.

%---------------------------------------------------------------------
%---------------------------------------------------------------------

\section{Some basic facts in operator theory}
\subsection{Antilinearity} \label{antilinearappendix}
Recall that our Hilbert spaces have inner products complex linear in the first entry.
We follow \cite{Black} (there the inner product is linear in the first entry). 
An antilinear operator $T:\mathsf{H} \longrightarrow \mathsf{K}$ is an additive operator such that $T\lambda \xi=\overline{\lambda}T\xi$. 

Let $J:\mathsf{H}\longrightarrow \mathsf{H}$ be antilinear and isometric: $\|J\xi\|=\|\xi\|$ for every $\xi$. By polarization it follows $\langle J\xi, J\eta \rangle = \langle \eta,\xi \rangle$ for every couple of vectors. If such $J$ is invertible it is called {\em{antiunitary}}.

An antilinear and isometric $J:\mathsf{H}\longrightarrow \mathsf{H}$ is called an {\em{involution}} if $J^2=\operatorname{Id}_{\mathsf{H}}$. It follows that $J$ is an antiunitary.
Involutions always exist for every Hilbert space and are very useful: if $T:\mathsf{H}\longrightarrow \mathsf{K}$ is antilinear then $JT$ is linear and we can safely talk about bounded antilinear operators by looking at $JT$ (for just one $J$; it does not depends on the choice).

Let $T:\mathsf{H}\longrightarrow \mathsf{K}$ be antilinear bounded, then the adjoint of $T$ is the unique antilinear bounded operator $T^*: \mathsf{K} \longrightarrow \mathsf{H}$ such that  
$$\langle T^*\xi, \eta \rangle=\langle T\eta, \xi \rangle, \quad  \xi \in \mathsf{K},\, \eta \in \mathsf{H}.$$  It satisfies: $(\lambda T)^*=\lambda T^*$ as opposite to the behaviour of the adjoint for linear operators.
Using an involution on $\mathsf{H}$ we can compute
$T^*:=J(TJ)^*$ in terms of the adjoint of a linear operator.

\subsection{Polar decompositions}\label{appendix polar}

A bounded operator $T:\mathsf{H}\longrightarrow \mathsf{K}$ is a partial isometry if $T^*T$ is a projection $P$.
Therefore $P\mathsf{H}=N(T)^{\bot}$ and
 also $Q:=TT^*$ is the projection onto $R(T)$, the range of $T$. These are called respectively initial and final support of $T$.
It also follows that $T$
restricts to an isometry $N(T)^{\bot}\longrightarrow R(T)$. 
\begin{theorem}(Left polar decomposition)
Any $T\in \mathsf{\mathsf{B}}(\mathsf{H},\mathsf{K})$ (two Hilbert spaces) has the decomposition $T=UP$ for a non negative operator $P:\mathsf{H} \to \mathsf{H}$ and a partial isometry $U:\mathsf{H}\to \mathsf{K}$. This decomposition is unique if we require that $N(U)=N(P)$.
Equivalently if we require that 
the initial support $(U^*U)\mathsf{H}$ of $U$ is $N(P)^{\bot}$. In this case we have the properties: $P=|T|:=\sqrt{T^*T}$ and the decomposition reads $$T=U|T|,$$ with $$\quad U^*U=\operatorname{Proj} \overline{N(T)^{\bot}}  \quad \textrm{and }\,\,\,UU^*=\operatorname{Proj}\overline{R(T)}.$$   \end{theorem}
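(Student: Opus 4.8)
The plan is to construct $U$ and $P$ explicitly and then verify uniqueness. First I would set $P := |T| = \sqrt{T^{*}T}$, the positive square root furnished by the continuous functional calculus applied to the nonnegative operator $T^{*}T \in \mathsf{B}(\mathsf{H})$; this is forced as the only candidate by the identity $P = |T|$ claimed in the statement. The cornerstone is the norm identity
$$
\||T|x\|^{2} = \langle |T|^{2} x, x\rangle = \langle T^{*}T x, x\rangle = \|Tx\|^{2}, \qquad x \in \mathsf{H},
$$
immediate from $|T|^{2} = T^{*}T$ and self-adjointness of $|T|$. This does all the work: it shows at once that $N(|T|) = N(T)$ and that the assignment $|T|x \longmapsto Tx$ is a well-defined linear isometry from $R(|T|)$ onto $R(T)$ (well-defined because $|T|x = |T|y$ forces $\|T(x-y)\| = \||T|(x-y)\| = 0$).

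Next I would define $U$ by extending this isometry by continuity to $\overline{R(|T|)}$ and declaring $U := 0$ on the orthogonal complement $\overline{R(|T|)}^{\bot}$. By construction $T = U|T|$, and $U$ is an isometry on $\overline{R(|T|)}$ vanishing on its complement, hence a partial isometry. To identify the supports I would record the Hilbert space fact $\overline{R(|T|)} = N(|T|)^{\bot} = N(T)^{\bot}$ (using that $|T|$ is self-adjoint and $N(|T|) = N(T)$); then $U^{*}U$ is the orthogonal projection onto the initial support $\overline{R(|T|)} = \overline{N(T)^{\bot}}$, while $UU^{*}$ projects onto $\overline{R(U)} = \overline{R(T)}$. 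This yields the two displayed formulas $U^{*}U = \operatorname{Proj}\,\overline{N(T)^{\bot}}$ and $UU^{*} = \operatorname{Proj}\,\overline{R(T)}$, and shows the two normalizations coincide: for a partial isometry $(U^{*}U)\mathsf{H} = N(U)^{\bot}$, so requiring $(U^{*}U)\mathsf{H} = N(P)^{\bot}$ is the same as $N(U) = N(P)$.

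For uniqueness, suppose $T = UP$ with $P \geq 0$ and $N(U) = N(P)$. Since $P$ is positive, $\overline{R(P)} = N(P)^{\bot} = N(U)^{\bot}$ equals the initial support of $U$, so $U^{*}U P = P$; multiplying $T = UP$ on the left by $T^{*} = P U^{*}$ gives $T^{*}T = P U^{*}U P = P^{2}$. Uniqueness of the positive square root then forces $P = \sqrt{T^{*}T} = |T|$. With $P$ pinned down, $U$ is determined on $R(P) = R(|T|)$ by $U(|T|x) = Tx$, extended by continuity to $\overline{R(|T|)}$, and the normalization $N(U) = N(P) = N(T)$ forces $U = 0$ on $\overline{R(|T|)}^{\bot}$, so $U$ is unique. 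I expect the only delicate point to be the well-definedness and continuous extension of $U$ in the first step, together with the self-adjointness argument identifying $\overline{R(|T|)}$ with $N(T)^{\bot}$; everything else is bookkeeping with projections.
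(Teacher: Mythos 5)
Your proof is correct and follows essentially the same route as the paper's: define $P=|T|=\sqrt{T^{*}T}$, use the norm identity $\||T|x\|=\|Tx\|$ to build the partial isometry $U$ on $\overline{R(|T|)}$ (zero on the complement), and derive uniqueness from $T^{*}T=PU^{*}UP=P^{2}$ together with uniqueness of the positive square root. Your write-up is in fact slightly more complete than the paper's, since you make explicit the well-definedness of $U$ and the equivalence of the two normalizations $N(U)=N(P)$ and $(U^{*}U)\mathsf{H}=N(P)^{\bot}$, which the paper leaves implicit.
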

\begin{proof}
Let $P:=|T|=\sqrt{T^*T}$ then $N(P)=N(T)$ and $N(T)^{\bot}=\overline{R(T^*)}=\overline{R(|T|)}.$ It follows that on $R(|T|)$ is well defined an isometric map $U$ such that $U(|T|x)=Tx$. On the orthogonal, which is $N(T)$ we declare it zero. Then $U$ is defined everywhere (and remains isometric on the closure of $R(|T|)$. Notice $R(U)=\overline{R(T)}$.

\noindent Assume we have decomposition $T=UP$ with $N(U)=N(P)$. Then $T^*=PU^*$ and $T^*T=PU^*UP$ but $U^*U=\operatorname{Proj} \overline{R(P)}$ i.e. $T^*T=P^2$ which means $P=|T|$. We already know that $U$ is uniquely determined on the range $P=|T|$ and we are done. 
\end{proof}
\noindent The left polar decomposition of $T^*$ gives rise to the \emph{right  polar decomposition} 
$$T=|T^*|U$$
of $T$.  Begin with $T=U|T|$. Then $T^*=|T|U^*$ and $TT^*=U|T|^2U^*$ which we can iterate getting for every power: $(TT^*)^n=U|T|^nU^*.$ It follows by the unicity of the functional calculus that $|T^*|=U|T|U^*$ i.e. $|T^*|U=U|T|$ (because $|T|U^*U=|T|$).

Let us now consider an antilinear bounded operator $T:\mathsf{H}\longrightarrow \mathsf{K}$. Using an involution as before we can construct polar decompositions  
$$T=V|T|=|T^{*}|V,$$ 
for $|T|=\sqrt{T^{*}T}$ a linear operator while $V$ is an antilinear partial isometry with $V^{**}V=\operatorname{Proj}(N(T))^{\bot}$ and $VV^{*}=\operatorname{Proj} \overline{R(T)}$. In particular $V$ reverts the order inside the inner product: on $(N(T))^{\bot}$ we have $\langle V\xi,V\eta \rangle =\langle \eta,\xi \rangle$. We also have $$|T^{*}|=V|T|V^{*}.$$

\medskip

\bigskip
\bigskip

\end{document}